\begin{document}

\thispagestyle{empty}
\title[Monochromatic Hyperedges in Multiplex Hypergraphs]{Joint Poisson Convergence of Monochromatic Hyperedges in Multiplex Hypergraphs}

\author{Yangxinyu Xie}
\address{Department of Statistics and Data Science, University of Pennsylvania, Philadelphia, USA } 
\email{xinyux@wharton.upenn.edu}

\author{Bhaswar B. Bhattacharya}
\address{Department of Statistics and Data Science, University of Pennsylvania, Philadelphia, USA }
\email{bhaswar@wharton.upenn.edu}

\subjclass[2010]{05C15, 60C05,  60F05}
\keywords{Birthday paradox, Combinatorial probability, Hypergraph coloring, Poisson approximation.}

\begin{abstract} 
Given a sequence of $r$-uniform hypergraphs $H_n$, denote by $T(H_n)$ the number of monochromatic hyperedges when the vertices of $H_n$ are colored uniformly at random with $c = c_n$ colors. In this paper, we study the joint distribution of monochromatic hyperedges for hypergraphs with multiple layers (multiplex hypergraphs). Specifically, we consider the joint distribution of $\bm T _n:= (T(H_n^{(1)}), T(H_n^{(2)}))$, for two sequences of hypergraphs $H_n^{(1)}$ and $H_n^{(2)}$ on the same set of vertices. We will show that the joint distribution of $\bm T_n $ converges to (possibly dependent) Poisson distributions whenever the mean vector and the covariance matrix of $\bm T_n$ converge. In other words, the joint Poisson approximation of $\bm T_n$ is determined only by the convergence of its first two moments. 
    This generalizes recent results on the second moment phenomenon for Poisson approximation from graph coloring to hypergraph coloring and from marginal convergence to joint convergence. Applications include generalizations of the birthday problem, counting monochromatic subgraphs in randomly colored graphs, and counting monochromatic arithmetic progressions in randomly colored integers.  Extensions to random hypergraphs and weighted hypergraphs are also discussed. 
\end{abstract}

\maketitle

\section{Introduction}
\label{sec: Introduction}

Fix $r \geq 2$ and consider a sequence of $r$-uniform hypergraphs $H_n= (V(H_n), E(H_n))$, with vertex set $V(H_n)$ and hyperedge set $E(H_n)$ such that $|E(H_n)| \rightarrow \infty$,   as $n \rightarrow \infty$. Suppose the vertices of $H_n$ are colored uniformly at random with $c = c_n$ colors, that is, 
\begin{align}\label{eq:cn}
\bP(v\in V(H_n) \text{ has color } a \in \{1, 2, \ldots, c_n \})=\frac{1}{c_n} , 
\end{align}
independently from the other vertices. Given such a coloring a hyperedge $\bm e \in E(H_n)$ is said to be {\it monochromatic} if all the vertices in $\bm e$ have the same color. Formally, if $X_v$ denotes the color of the vertex $v \in V(H_n)$, then an edge $\bm e \in E(H_n)$ is monochromatic if $X_{v} = X_{v'}$, for all $v, v' \in \bm e$. 
Denote by $T(H_n)$ the number of monochromatic hyperedges in $H_n$, that is, 
\begin{equation}
    \label{eq:THn}
    T(H_n) := \sum_{\bm e \in E(H_n)} \bm 1\{X_{=\bm e}\} , 
\end{equation} 
where $\bm 1\{X_{=\bm e}\} := \bm 1\{X_{v} = X_{v'} \text{ for all } v,v' \in \bm e\}$. This general framework includes numerous celebrated problems in combinatorial probability. The following are three examples:

\begin{example}[Birthday Problem] 
\label{example:monochromaticedgesgraph} 
When $r=2$, then $H_n$ is a graph and $T(H_n)$ counts the number of monochromatic edges in a uniformly random coloring of the vertices of $H_n$. Specifically, if $H_n$ is a friendship-graph (two people are connected by an edge in the graph if they are friends) colored uniformly randomly with $c=365$ colors (where the colors correspond to birthdays and the birthdays are assumed to be uniformly distributed across the year), then a monochromatic edge in $H_n$ corresponds to two friends with the same birthday. The birthday problem asks for the probability that there are two friends with the same birthday, that is, $\bP(T(H_n)>0)$. This is one of the most famous problems in elementary probability, which has been generalized in various directions and found applications in diverse fields (see \cite{barbour1992poisson,dasgupta2005matching,diaconis2002bayesian,diaconis1989methods,mitzenmacher2017probability,galbraith2012non,discretelogarithm,birthdayexchangeability} and the references therein). Note that $\bP(T(H_n)>0)=1-\bP(T(H_n)=0)=1-\chi_{H_n}(c)/c^{|V(H_n)|}$, where $\chi_{H_n}(c)$ counts the number of proper colorings of $H_n$ using $c$ colors. 
The function $\chi_{H_n}$ is known as the {\it chromatic polynomial} of $H_n$, a central object of study in graph theory \cite{dong2005chromatic,jensen2011graph,jukna2011extremal,mitzenmacher2017probability,van2001course}. A natural generalization of the birthday problem is to consider higher-order birthday matches; that is, the probability that there is a group of $r$ friends sharing the same birthday \cite{diaconis1989methods,nandi2007multicollision,generalizedbirthdayproblem,birthdaymulticollison}. 
This corresponds to counting the number of $r$-cliques in $H_n$ \cite{bhattacharya2020second}, which can be reformulated in terms of \eqref{eq:THn} as follows: Consider the $r$-uniform hypergraph $\cC_r(H_n) = (V(\cC_r(H_n)), E(\cC_r(H_n)))$, with $V(\cC_r(H_n)) = V(H_n)$ and 
$$E(\cC_r(H_n))  = \left\{S \in {V(H_n) \choose r} : S \text{ forms a } r\text{-clique in } H_n \right \},$$ 
where ${V(H_n) \choose r}$ denotes the collection of all $r$-element subsets of $V(H_n)$. Then $T(\cC_r(H_n))$ is precisely the number of monochromatic $r$-cliques (the number of $r$-th order birthday matches) in $H_n$. 
\end{example}

\begin{example}[Monochromatic subgraphs in random edge colorings]
\label{example:edgecoloring} 
Understanding the existence of monochromatic subgraphs in edge-colored graphs is one of the basic objectives in Ramsey theory \cite{graham1991ramsey,ramsey2015recent}. Specifically, Ramsey's theorem in its graph theoretic form, states that given a graph $F = (V(F), E(F))$, any $c$-coloring of the edges of a complete graph $K_n$ contains a monochromatic copy of $F$, whenever $n$ is large enough (depending on $c$ and $F$). Although Ramsey's theorem is a statement about any coloring, understanding what happens for a random coloring has important consequences. For instance, when $c=2$, for certain graphs $F$ (which are known as {\it common} graphs) the number of monochromatic copies of $F$ is asymptotically minimized by a random edge-coloring (see \cite{goodman1959sets,burr1980ramsey} and the references therein). Going beyond complete graphs to a general graph sequence $G_n = (V(G_n), E(G_n))$, with $|E(G_n)| \rightarrow \infty$, one can define $R(F, G_n)$ as the number of monochromatic copies of $F$ when the edges of $G_n$ are colored uniformly at random with $c$-colors.\footnote{In a uniformly random edge coloring of a graph $G_n = (V(G_n), E(G_n))$ with $c$ colors, each edge $e \in E(G_n)$ has color $a \in \{1, 2, \ldots, c\}$ with probability $1/c$, independent of the other edges. } This can be expressed in terms of \eqref{eq:THn} as follows: Consider the $|E(F)|$-uniform hypergraph $H_{G_n}(F) = (V(H_{G_n}(F)), E(H_{G_n}(F)) )$, where $V(H_{G_n}(F))  = E(G_n)$ and $E(H_{G_n}(F) )$ is the collection of $|E(F)|$-element subsets of $E(G_n)$ that form a copy of $F$ in $G_n$. Note that a uniformly random edge-coloring of the graph $G_n$ corresponds to a uniformly random vertex coloring of the hypergraph $H_{G_n}(F)$ , and hence, $T(H_{G_n}(F)) = R(F, G_n)$. 
\end{example}

\begin{example}[Monochromatic arithmetic progressions]  
\label{example:ap} 
Another key result in Ramsey theory is Van der Waerden's theorem \cite{van1927beweis,graham1991ramsey}, which states that given positive integers $c \geq 2$ and $r \geq 3$, any $c$-coloring of the integers $[n] :=\{1, 2, \ldots, n\}$ 
contains a monochromatic $r$-term arithmetic progression ($r$-AP), that is, $r$ equally spaced integers of the same color, when $n$ is sufficiently large (depending on $r$ and $c$). As in  the graph theoretic setting, counting the number of monochromatic arithmetic progressions in a random coloring have important consequences (see  \cite{ap2017ramsey} and the references therein). Thus, for any set $A_n\subseteq [n]$, with $|A_n| \rightarrow \infty$, we can define $W_r(A_n)$ to be the number of monochromatic arithmetic progressions in a random $c$-coloring of the elements of $A_n$. To express $W_r(A_n)$ as \eqref{eq:THn} consider the hypergraph $H_{A_n}(r)$ with vertex set $A_n$ and edge set the collection of all $r$-term APs in $A_n$. Then $T(H_{A_n}(r))  = W_r(A_n)$. 
\end{example}

In this paper we will study the limiting distribution of $T(H_n)$ in the regime where both $|E(H_n)| \rightarrow \infty$ and $c= c_n \rightarrow \infty$ such that 
\begin{align}\label{eq:ETHn}
\bE T(H_n) = \frac{1}{c_n} |E(H_n)| \to \lambda \geq 0. 
\end{align}
This problem is well understood when $r = 2$, that is, when $H_n$ is a graph and $T(H_n)$ counts the number of monochromatic edges. In this case, $T(H_n)$ exhibits a {\it first moment phenomenon}: $$T(H_n)\dto \Pois(\lambda)$$ for any graph sequence $H_n$ for which \eqref{eq:ETHn} holds (see \cite[Theorem 5.G]{barbour1992poisson} and \cite[Theorem 1.1]{bhattacharya2017universal}). However, when $r \ge 3$, only assuming the convergence of the first moment as in \eqref{eq:ETHn} is not enough for Poisson convergence. For $r \geq 3$, one can construct a sequence of $r$-uniform hypergraphs $H_n$, for which $\bE T(H_n) \to \lambda$, but $T(H_n) \not \to \Pois(\lambda)$ (see Appendix \ref{sec:hypergraphexample} for an example). This raises the question: {\it What does one need to assume, in addition to \eqref{eq:ETHn}, to obtain a Poisson limit for $T(H_n)$, for any sequence of $r$-uniform hypergraph $H_n$, with $r \geq 3$?}

A few recent papers have addressed the above question in the context of graphs. Bhattacharya et al. \cite{bhattacharya2020second} established a second moment phenomenon for the number of monochromatic subgraphs in a randomly vertex-colored graph (which can be reformulated as a weighted analogue of $T(H_n)$, see Section \ref{sec:vertexsubgraph}). Specifically, \cite[Theorem 1.1]{bhattacharya2020second} shows that the number of monochromatic subgraphs in a sequence of graphs with vertices colored uniformly at random has a limiting Poisson distribution whenever its mean and variance converge to the same limit. In related work, \cite{distributionstars} characterized all distributional limits for counting monochromatic stars. In forthcoming paper \cite{subgraphpoisson}, the second moment phenomenon has also been established for monochromatic subgraphs in a randomly edge-
colored graph (recall Example \ref{example:edgecoloring}).

In this paper we generalize the above results in two directions: (1) from graphs to hypergraphs and (2) from marginal convergence to joint convergence. To begin with, we show that the Poisson approximation of $T(H_n)$ exhibits a second moment phenomenon, for any sequence of $r$-uniform hypergraphs $H_n$ with $r \geq 3$. More precisely, $T(H_n) \dto \Pois(\lambda)$, whenever $\bE T(H_n) \rightarrow \lambda$ and $\Var T(H_n) \rightarrow \lambda$ (see Theorem \ref{thm:THn}). Next, we show that the second moment phenomenon continues to hold for the joint distribution of the number of monochromatic hyperedges in 2-layer mutliplex hypergraphs (two hypergraphs sharing the same set of the vertices). More precisely, the joint distribution of $(T(H_n^{(1)}), T(H_n^{(2)}))$, where $H_n^{(1)}$ and $H_n^{(2)}$ are two hypergraphs on the same set of vertices, converges to a joint Poisson distribution, whenever their mean vector and covariance matrix converge. The marginal Poisson distributions are dependent when $H_n^{(1)}$ and $H_n^{(2)}$ are both $r$-uniform (see Theorem \ref{thm:hypergraphr}), whereas they are independent when $H_n^{(1)}$ and $H_n^{(2)}$ have different orders of uniformity (see Theorem \ref{thm:r1r2}). Extensions to more than 2 layers are also discussed (see Section \ref{sec:layersjoint}). Using these general results, we can derive  joint Poisson approximation results for the problems in Examples \ref{example:monochromaticedgesgraph}, \ref{example:edgecoloring}, and \ref{example:ap}. We discuss these in Section \ref{sec:applications} where we also obtain the joint distribution of the number of monochromatic hyperedges in a correlated Erd\H os-R\'enyi random hypergraph model and consider extensions to weighted hypergraphs, which, in particular, recovers the main result from \cite{bhattacharya2020second} about monochromatic subgraphs in a randomly vertex-colored graph.

\subsection{Second Moment Phenomenon for Hypergraphs} 
\label{sec:poissonhypergraph}

Our first result shows that the Poisson approximation for the number of monochromatic hyperedges is governed by a second moment phenomenon, for general $r$-uniform hypergraphs. 

\begin{thm}
    \label{thm:THn}
    Fix an integer $r \geq 2$ and consider a sequence $H_n$ of $r$-uniform hypergraphs. Suppose the vertices of $H_n$ are colored uniformly at random with $c_n$ colors as in \eqref{eq:cn}, such that the following hold: 
    $$\lim_{n \to \infty}\bE T(H_n) = \lambda \text{ and } \lim_{n \to \infty}\Var T(H_n) = \lambda, $$ 
    for some constant $\lambda \geq 0$. Then, as $n \rightarrow \infty$, $$T(H_n) \dto \Pois(\lambda).$$
\end{thm}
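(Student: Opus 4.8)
The plan is to apply the method of moments: a Poisson law is determined by its moments, so it suffices to prove $\bE[(T(H_n))_k]\to\lambda^k$ for every fixed $k\ge1$, where $(N)_k:=N(N-1)\cdots(N-k+1)$ (and $T(H_n)$, being a sum of indicators, has all moments finite). If $\lambda=0$ this is immediate from $\bP(T(H_n)\ge1)\le\bE T(H_n)\to0$, so assume $\lambda>0$; then $c_n^{r-1}=|E(H_n)|/\bE T(H_n)\to\infty$, so $c_n\to\infty$. Write $\bm1_{\bm e}:=\bm1\{X_{=\bm e}\}$ and $p_{\bm e}:=\bE\bm1_{\bm e}=c_n^{1-r}$. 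The starting point is an exact formula for the joint probabilities: given distinct hyperedges $\bm e_1,\dots,\bm e_k$, turn each $\bm e_i$ into a clique on $\bigcup_i\bm e_i$; if the resulting graph has $v$ vertices and $\kappa$ connected components then
\[
\bE\Big[\prod_{i=1}^k\bm1_{\bm e_i}\Big]=c_n^{\,\kappa-v},
\]
since every component must be monochromatic. Putting $g:=(kr-v)-(k-\kappa)\ge0$, the cyclomatic number of the intersection pattern, this equals $c_n^{k(1-r)}c_n^{\,g}$, and $g=0$ exactly for the \emph{forest-like} tuples (pairwise disjoint edges, or edges glued along single vertices without closing a cycle).

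Summing over ordered tuples of distinct edges and comparing with $\sum_{\text{distinct tuples}}\prod_i p_{\bm e_i}=(|E(H_n)|)_k c_n^{k(1-r)}=(\bE T(H_n))^k(1+o(1))\to\lambda^k$ (the value obtained by setting $g=0$ in every term), the forest-like terms cancel and
\[
\bE[(T(H_n))_k]-\lambda^k=o(1)+\sum_{\text{tuples: }g\ge1}c_n^{k(1-r)}\big(c_n^{\,g}-1\big),
\]
a sum of nonnegative contributions over the \emph{cyclic} tuples. Everything now reduces to showing this cyclic remainder vanishes, and here the second-moment hypothesis enters. Since $\Var T(H_n)=\bE T(H_n)-\sum_{\bm e}p_{\bm e}^2+\sum_{\bm e\ne\bm f}\operatorname{Cov}(\bm1_{\bm e},\bm1_{\bm f})$ and every covariance is nonnegative (two edges are independent once they meet in at most one vertex, and positively correlated otherwise), the hypotheses $\Var T(H_n),\bE T(H_n)\to\lambda$ force
\[
\sum_{\bm e}p_{\bm e}^2=|E(H_n)|\,c_n^{2-2r}\to0\qquad\text{and}\qquad N_s\,c_n^{\,s+1-2r}\to0\ \ (2\le s\le r-1),
\]
where $N_s$ counts ordered pairs of edges meeting in exactly $s$ vertices.

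To bound the remainder I split each cyclic tuple into the connected components of its shares-a-vertex graph and factor the contribution over components. A \emph{tree} component (in particular a singleton) contributes only a bounded factor: building it edge by edge, each new edge attaches at a single shared vertex, and summing over attachments costs at most $\sum_v d_v\,c_n^{1-r}=r\,\bE T(H_n)=O(1)$ (with $d_v$ the degree of $v$), uniformly in the degree sequence. It therefore suffices to show that every \emph{cyclic} component is $o(1)$. A two-edge cycle, i.e.\ a pair meeting in $s\ge2$ vertices, contributes exactly $\sum_{s\ge2}N_s c_n^{s+1-2r}\to0$ by the second input above.

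The genuine obstacle is the \emph{single-vertex cycles}: cyclic configurations all of whose edges pairwise meet in at most one vertex, the smallest being a triangle $\bm e_1,\bm e_2,\bm e_3$ whose three pairwise intersections are three distinct single vertices. These are jointly dependent even though all their pairwise covariances vanish, so they are invisible to the second moment at the level of pairs and cannot be reached through the covariance sum. I handle them by a counting inequality: with $d_{uv}$ the number of edges containing the pair $\{u,v\}$ and $W=(d_{uv})$, the number of single-vertex $\ell$-cycles is at most $\operatorname{tr}(W^\ell)\le(\operatorname{tr}W^2)^{\ell/2}=\big(\sum_{u\ne v}d_{uv}^2\big)^{\ell/2}$, while $\sum_{u\ne v}d_{uv}^2\le C_r\big(|E(H_n)|+\sum_{s\ge2}N_s\big)$ is exactly the quantity controlled by the two inputs; a short computation then bounds each such cycle's contribution by $o(1)$. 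Combining the tree estimate, the two-edge-cycle estimate and this cycle-counting estimate—and iterating over a cycle basis to treat components of higher cyclomatic number—shows every cyclic component is $o(1)$. As $k$ is fixed there are only finitely many component shapes and each cyclic tuple has a cyclic component, so the remainder tends to $0$ and $\bE[(T(H_n))_k]\to\lambda^k$. I expect the careful bookkeeping for general cyclic cores (cyclomatic number $\ge2$, with cycles possibly sharing edges) to be the most delicate point, though it is driven entirely by the same two inputs.
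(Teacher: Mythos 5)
Your reduction of the variance hypothesis to the two inputs $|E(H_n)|c_n^{1-r}\to\lambda$ and $N_s\,c_n^{s+1-2r}\to 0$ for $2\le s\le r-1$ is correct and matches what the paper extracts (its Theorem \ref{thm:THnKpoisson}), and your estimates for tree components, two-edge overlaps, and pure single-vertex cycles are sound. But the overall strategy --- the method of moments applied directly to $T(H_n)$ --- cannot be completed, because under the stated hypotheses the higher moments of $T(H_n)$ need not converge at all. The failure occurs exactly in the case you defer to ``iterating over a cycle basis'': components of cyclomatic number $\ge 2$. Concretely, take $r=3$, $c_n\to\infty$, and let $H_n$ consist of $\lfloor \lambda c_n^2\rfloor$ pairwise disjoint $3$-edges together with $D=\lceil c_n^{7/5}\rceil$ edges $\{u_0,v_0,w_1\},\dots,\{u_0,v_0,w_D\}$ through a fixed pair, everything else disjoint. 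Then
\[
\bE T(H_n)=\lambda+c_n^{-3/5}+o(1)\to\lambda,\qquad \Var T(H_n)=\lambda+D(D-1)\bigl(c_n^{-3}-c_n^{-4}\bigr)+o(1)=\lambda+O\bigl(c_n^{-1/5}\bigr)\to\lambda,
\]
so the hypotheses of the theorem hold (and indeed $T(H_n)\dto\Pois(\lambda)$). Yet any ordered triple of distinct edges through $\{u_0,v_0\}$ forms a connected configuration on $5$ vertices and is jointly monochromatic with probability $c_n^{-4}$, whence
\[
\bE\bigl[(T(H_n))_3\bigr]\;\ge\;D(D-1)(D-2)\,c_n^{-4}\;\sim\;c_n^{1/5}\;\longrightarrow\;\infty.
\]
So your ``cyclic remainder'' genuinely diverges for $k=3$; no bookkeeping over cycle bases can rescue the identity $\bE[(T(H_n))_k]\to\lambda^k$, because it is false. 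In your own notation the obstruction is that the variance only controls $\sum_{u\ne v}d_{uv}^2$, which permits $\max_{u\ne v}d_{uv}$ as large as $c_n^{(2r-3)/2-\delta}$, and then $\sum_{u\ne v}d_{uv}^3$ overwhelms the factor $c_n^{1-v}$.

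This is precisely why the paper's argument is a \emph{truncated} moment comparison rather than a direct one. It first discards, into a remainder term, every hyperedge containing some $t$-subset ($2\le t\le r-1$) that lies in more than $\varepsilon c_n^{r-t}$ hyperedges; the variance condition shows this remainder is $o(1)$ in $L_1$ (so it is negligible for the distributional limit even though its higher moments may explode), and only the truncated main term is compared in moments with an independent Bernoulli surrogate. In the example above, the $D$ edges through $\{u_0,v_0\}$ are exactly the ones removed by the truncation. To repair your proof you would need to insert such a truncation before invoking the method of moments; your component-by-component counting would then go through for the truncated sum, essentially reproducing the paper's Lemmas \ref{lm:remainderH}--\ref{lm:countS}.
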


The proof of Theorem \ref{thm:THn} is essentially an adaption of the `truncated moment-comparison technique' from \cite{bhattacharya2020second} to the hypergraph setting.\footnote{Recall that \cite{bhattacharya2020second}  established the second moment phenomenon for monochromatic subgraphs in randomly vertex-colored graphs, which can be reformulated as a weighted  version of \eqref{eq:THn} (see Section \ref{sec:vertexsubgraph} for details).} In particular, the proof of Theorem \ref{thm:THn} has two main steps:

\begin{itemize}
\item {\it Truncation}: We decompose $T(H_n)$ into two parts: the {\it main term} and the {\it remainder term}. Informally, the remainder term counts the number of monochromatic hyperedges that share at least two vertices with a `large' number of other hyperedges. The first step in the proof is to show that this remainder term converges to zero in $L_1$. 

\item {\it Moment Comparison}: To analyze the {\it main term} we consider a surrogate obtained by replacing the collection of random variables $\{\bm 1 \{X_{=\bm e} \} : \bm e \in E(H_n)\}$ with independent $\Ber(\frac{1}{c_n^{r-1}})$ random variables, and show that the main term and the surrogate are asymptotically close in moments. 
\end{itemize}

\begin{remark} \label{remark:steinsmethod}
Another common approach to showing that a sequence of random variables has a Poisson limit is through the celebrated Chen-Stein method for Poisson approximation \cite{poisson2,barbour1992poisson,birthdayexchangeability,chatterjee2005exchangeable,barbour2005multivariate,poisson2023multivariate}.  In fact, the well-known dependency graph method \cite[Theorem 15]{chatterjee2005exchangeable} bounds the convergence rate in terms of the second moment  (but not in terms of the mean and variance). Arratia et al. \cite{arratia1990poisson}  (see also Chatterjee et al. \cite{chatterjee2005exchangeable}) used this to obtain the rate of convergence for the number of $r$-matching birthdays (recall the setup in Example \ref{example:monochromaticedgesgraph}). However, the dependency graph method cannot be directly used to prove Theorem \ref{thm:THn} for general graphs/hypergraphs, as the condition imposed by the convergence of the mean and variance is generally weaker than what is required by a generic dependency graph construction (see Remark \ref{remark:poissonhypergraph} for more details).  
\end{remark}

\subsection{Second Moment Phenomenon for 2-Layer Multiplex Hypergraphs}
\label{sec:poissonhypergraphlayers}

Higher-order networks \cite{battiston2021complexsystems,battiston2020networks,hypergraph2022higher,network2016higher} and networks with multiple layers are prototypical models for understanding complex relational data. In particular, there has been an explosion of interest in multilayer graphs (which are referred to as {\it multiplexes}) in recent years (see \cite{bianconi2018multilayer,networksbiology} for book-length treatments of multiplex networks). Multiplex hypergraphs (hypergraphs with more than one layer) are emerging as a new paradigm for modeling real-world networks with many interdependent higher-order interactions (see \cite{multiplexhypergrpahs,krishnagopal2023topology,lotito2024multiplex} and the references therein). 

Our aim in this section is to derive the joint distribution of the number of monochromatic hyperedges in a random vertex coloring of a hypergraph with two layers, hereafter referred to as a $2$-{\it hypermultiplex}. Formally, a 2-hypermultiplex is denoted as $\bm H_n = (H_n^{(1)}, H_n^{(2)})$, where $H_n^{(1)}$ is an $r_1$-uniform hypergraph and $H_n^{(2)}$ is an $r_2$-uniform hypergraph, for positive integers $r_1, r_2 \geq 2$, with a common vertex set $V(H_n^{(1)}) = V(H_n^{(2)}) = V_n$. Suppose the vertices in $V_n$ are colored with $c_n$ colors as in \eqref{eq:cn}. Then the bivariate analogue of \eqref{eq:THn} is defined as: 
\begin{align}
\label{eq:THn12}
\bm T( \bm{H}_n) = 
\begin{pmatrix} 
T(H_n^{(1)}) \\ 
T(H_n^{(2)}) 
\end{pmatrix}
 = 
 \begin{pmatrix} 
\sum_{\bm e \in E(H_n^{(1)})} \bm 1\{ X_{= \bm e}\} \\ 
\sum_{\bm e \in E(H_n^{(2)})} \bm 1\{ X_{= \bm e}\} 
\end{pmatrix} . 
\end{align} 
As in \eqref{eq:ETHn}, we are interested in the regime where $c_n \rightarrow \infty$ such that $\bE \bm T(H_n) = O(1)$.

The nature of limiting distribution of $\bm T(\bm H_n)$ depends on whether $r_1=r_2$ or $r_1 \ne r_2$. We first consider the case where $r_1=r_2$. In this case, $\bm T(\bm H_n)$ converges to a jointly dependent Poisson distribution, whenever the mean vector and the covariance matrix of $\bm T(\bm H_n)$ converges to appropriate limits. We formalize this in the following theorem.

\begin{thm}
  \label{thm:hypergraphr}
  Fix an integer $r \geq 2$ and consider a sequence $\bm H_n = (H_n^{(1)}, H_n^{(2)})$ of 2-hypermultiplexes, where both $H_n^{(1)}$ and $H_n^{(2)}$ are $r$-uniform. Suppose the vertices of $\bm H_n$ are colored uniformly at random with $c_n$ colors, such that the following hold: 
\begin{align}\label{eq:meanvariancer} 
\lim_{n \rightarrow \infty} \bE [ \bm T(\bm H_n) ] = \begin{pmatrix}
  \lambda_1 \\ 
  \lambda_2
  \end{pmatrix} \quad \text{ and } \quad \lim_{n \rightarrow \infty} \mathrm{Var}[ \bm T(\bm H_n) ] = \begin{pmatrix}
  \lambda_1 & \lambda_{1, 2} \\ 
  \lambda_{1, 2} & \lambda_2
  \end{pmatrix} , 
    \end{align}
   for constants $\lambda_1, \lambda_2, \lambda_{1,2} \geq 0$ with $\lambda_{1,2} \le \lambda_1$ and $\lambda_{1,2} \le \lambda_2$. Then
  $$ \bm T(\bm H_n) \dto \left(\begin{array}{l}
          Z_1 + Z_{1,2} \\
          Z_2 + Z_{1,2}
          \end{array}\right) , $$
  where $Z_1 \sim \Pois(\lambda_1 - \lambda_{1,2})$, $Z_2 \sim \Pois(\lambda_2 - \lambda_{1,2})$, and $Z_{1,2} \sim \Pois(\lambda_{1,2})$ are independent. 
\end{thm}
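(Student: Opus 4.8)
The plan is to prove the theorem by a joint factorial-moment comparison against an independent-Bernoulli surrogate, mirroring the moment-comparison step behind Theorem~\ref{thm:THn}. Write $E_i := E(H_n^{(i)})$ and $p = p_n := 1/c_n^{r-1}$, so that $\bP(X_{=\bm e}) = p$ for every $r$-edge $\bm e$. To each edge $\bm e \in E_1 \cup E_2$ I attach an independent $\Ber(p)$ variable $Y_{\bm e}$ and set $\tilde T_i := \sum_{\bm e \in E_i} Y_{\bm e}$. The point of this construction is that a coincident edge $\bm g \in E_1 \cap E_2$ feeds the \emph{same} variable $Y_{\bm g}$ into both $\tilde T_1$ and $\tilde T_2$; this shared contribution is exactly what generates the common component $Z_{1,2}$. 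I will first show that the surrogate $(\tilde T_1, \tilde T_2)$ already has the limit claimed in the theorem, and then show that $\bm T(\bm H_n)$ and $(\tilde T_1, \tilde T_2)$ agree asymptotically in every joint factorial moment. Since the bivariate Poisson law in the statement is determined by its moments, these two steps together give the result.

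For the surrogate, I split along the three disjoint edge classes $E_1 \setminus E_2$, $E_2 \setminus E_1$, $E_1 \cap E_2$:
\[ \tilde T_1 = \tilde A_n + \tilde C_n, \qquad \tilde T_2 = \tilde B_n + \tilde C_n, \]
where $\tilde A_n = \sum_{\bm e \in E_1 \setminus E_2} Y_{\bm e}$, $\tilde B_n = \sum_{\bm e \in E_2 \setminus E_1} Y_{\bm e}$, and $\tilde C_n = \sum_{\bm e \in E_1 \cap E_2} Y_{\bm e}$. These sums run over disjoint index sets of independent variables, so $\tilde A_n, \tilde B_n, \tilde C_n$ are independent, and each is a binomial that converges, by the classical binomial-to-Poisson (Le Cam) approximation, to a Poisson with its limiting mean. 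The only nontrivial input is the identification of these means, which rests on the structural claim
\[ |E_1 \cap E_2|\, p \to \lambda_{1,2}, \]
that is, that the limiting covariance is carried entirely by coincident edges. Granting it, $\bE \tilde A_n \to \lambda_1 - \lambda_{1,2}$, $\bE \tilde B_n \to \lambda_2 - \lambda_{1,2}$ and $\bE \tilde C_n \to \lambda_{1,2}$, so $(\tilde A_n, \tilde B_n, \tilde C_n) \dto (Z_1, Z_2, Z_{1,2})$ independent, and the continuous map $(x,y,z) \mapsto (x+z, y+z)$ delivers the target law for $(\tilde T_1, \tilde T_2)$.

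To prove the structural claim I expand
\[ \mathrm{Cov}\big(T(H_n^{(1)}), T(H_n^{(2)})\big) = |E_1 \cap E_2|(p - p^2) + \sum_{k=1}^{r-1} P_k\,(p_k - p^2), \]
where $P_k$ counts ordered pairs $(\bm e, \bm f) \in E_1 \times E_2$ with $|\bm e \cap \bm f| = k$ and $p_k := c_n^{-(2r-k-1)}$ is the probability that two edges sharing $k$ vertices are both monochromatic. The $k = 1$ term vanishes identically because $p_1 = p^2$. For $2 \le k \le r-1$ I would control $P_k$ through the shared $k$-set $S$: writing $d_i(S) := |\{\bm e \in E_i : S \subseteq \bm e\}|$, Cauchy--Schwarz gives
\[ P_k \le \sum_{|S| = k} d_1(S)\, d_2(S) \le \Big(\sum_{|S|=k} d_1(S)^2\Big)^{1/2}\Big(\sum_{|S|=k} d_2(S)^2\Big)^{1/2}, \]
while $\sum_{|S|=k} d_i(S)^2 = |E_i|\binom{r}{k} + \sum_{j=k}^{r-1}\binom{j}{k} Q_j^{(i)}$, with $Q_j^{(i)}$ the number of ordered pairs of distinct edges of $E_i$ sharing exactly $j$ vertices. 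The hypothesis $\Var T(H_n^{(i)}) \to \lambda_i = \bE T(H_n^{(i)})$ forces $Q_j^{(i)} = o(c_n^{2r-j-1})$ for every $j \ge 2$ (the diagonal of the variance already accounts for $\lambda_i$, and the remaining overlap terms are nonnegative), so $\sum_{|S|=k} d_i(S)^2 = o(c_n^{2r-k-1})$ and hence $P_k = o(c_n^{2r-k-1})$. Every partial-overlap term is therefore $o(1)$, leaving only the coincident term, which gives the claim.

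It remains to match the joint factorial moments. I expand $\bE[(T(H_n^{(1)}))_a (T(H_n^{(2)}))_b]$ and $\bE[(\tilde T_1)_a (\tilde T_2)_b]$ as sums over ordered tuples of distinct edges --- $a$ drawn from $E_1$ and $b$ from $E_2$ --- and group the terms by the vertex-overlap pattern of the chosen edges. The two expansions agree on every pattern that is vertex-disjoint or whose only overlaps are single shared vertices (there the monochromaticity probability factorizes as a power of $p$, matching independence), whereas any pattern containing a pair of edges that share at least two vertices is negligible. This last point is exactly the truncation step of Theorem~\ref{thm:THn}, now carried out simultaneously within and across the two layers and controlled by the same bounds $Q_j^{(i)}, P_k = o(c_n^{2r-j-1})$ obtained above; the surviving contributions reproduce the limiting factorial moments $a!\,b!\sum_{k=0}^{\min(a,b)} \lambda_1^{a-k}\lambda_2^{b-k}\lambda_{1,2}^{k}\big/\big((a-k)!\,(b-k)!\,k!\big)$ of the claimed bivariate Poisson. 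I expect this joint truncation-and-comparison to be the main obstacle: one must organize the overlap bookkeeping for tuples drawn from two hypergraphs --- in particular the mixed tuples built on coincident edges, which supply the $\lambda_{1,2}$ factors --- and verify uniform negligibility of the high-overlap patterns at each fixed order $(a,b)$. As Remark~\ref{remark:steinsmethod} notes, the mean and variance hypotheses are too weak to feed a generic dependency-graph bound, which is precisely what forces this hands-on moment comparison.
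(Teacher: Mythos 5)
Your proposal is correct and follows essentially the same route as the paper: the reduction of the limiting covariance to the coincident-edge count $|E_1\cap E_2|/c_n^{r-1}\to\lambda_{1,2}$ (killing the partial-overlap terms via the variance hypotheses, exactly as in the paper's verification of \eqref{eq:EH12}), the three-way split along $E_1\setminus E_2$, $E_2\setminus E_1$, $E_1\cap E_2$ with a Bernoulli surrogate shared on coincident edges, and a truncation-plus-joint-moment comparison as in Theorem \ref{thm:jointr}. The only differences are cosmetic (factorial versus ordinary joint moments, Cauchy--Schwarz versus the AM--GM bound in \eqref{eq:KH1H2joint}).
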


The proof of Theorem \ref{thm:hypergraphr} is given in Section \ref{sec:hypergraphrpf}, and it involves decomposing $T(H_n^{(1)})$ and $T(H_n^{(2)})$ in the following way: 
$$T(H_n^{(1)})= T(H_n^{(1)}\setminus H_n^{(2)}) + T(H_n^{(1)}\cap H_n^{(2)}), \quad T(H_n^{(2)})= T(H_n^{(2)}\setminus H_n^{(1)}) + T(H_n^{(1)}\cap H_n^{(2)})$$
where $H_n^{(1)}\setminus H_n^{(2)}$, $H_n^{(2)}\setminus H_n^{(1)}$, and $H_n^{(1)}\cap H_n^{(2)}$ denote the hypergraphs induced by the hyperedges that are only in $H_n^{(1)}$, the hyperedges that are only in $H_n^{(2)}$, and the hyperedges that are in both $H_n^{(1)}$ and $H_n^{(2)}$, respectively (see Section \ref{sec:hypergraphrpf} for the formal definitions). Then, using a truncation argument as in the proof of Theorem \ref{thm:THn} and a joint moment comparison (with an appropriate surrogate variable), we show that the joint distribution of 
$$(T(H_n^{(1)}\setminus H_n^{(2)}), T(H_n^{(2)}\setminus H_n^{(1)}), T(H_n^{(1)}\cap H_n^{(2)}))$$
converges to independent Poisson distributions with mean $\lambda_1 - \lambda_{1,2}, \lambda_2 - \lambda_{1,2}, \lambda_{1,2}$, respectively, under the assumptions of Theorem \ref{thm:hypergraphr}.

Next, we consider the case where the 2-layers $H_n^{(1)}$ and $H_n^{(2)}$ have different orders of uniformity ($r_1 < r_2$). In this case, since the 2 layers have different orders of uniformity, the ``intersection" hypergraph $H_n^{(1)}\cap H_n^{(2)}$ is empty, and the number of monochromatic hyperedges from layer 1 that are completely contained in a monochromatic hyperedge from layer 2 is asymptotically negligible. Hence, in this case  the asymptotic distribution of $\bm T(\bm H_n)$ has independent components. We formalize this result in the following theorem. The proof is given in  Section \ref{sec:r1r2pf}.  

\begin{thm}
  \label{thm:r1r2}
  Fix integers $2 \le r_1 < r_2$ and consider a sequence $\bm H_n = (H_n^{(1)}, H_n^{(2)})$ of 2-hypermultiplexes, where 
  $H_n^{(1)}$ is $r_1$-uniform and $H_n^{(2)}$ $r_2$-uniform. Suppose the vertices of $\bm H_n$ are colored uniformly at random with $c_n$ colors, such that the following hold:  For each $i \in \{1, 2\}$, 
\begin{align}\label{eq:meanvariancer1r2}
\lim_{n \to \infty}\bE[T(H_n^{(i)})] = \lim_{n \to \infty}\Var[T(H_n^{(i)})] = \lambda_i . 
\end{align} 
      for some constant $\lambda_i \geq 0$. Then
  $$ \bm T(\bm H_n) \dto \left(\begin{array}{l}
          Z_1\\
          Z_2
          \end{array}\right) , $$
    where $Z_1 \sim \Pois(\lambda_1), Z_2 \sim \Pois(\lambda_2)$ are independent.
\end{thm}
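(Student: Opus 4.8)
The plan is to follow the architecture behind Theorems \ref{thm:THn} and \ref{thm:hypergraphr}, specialized to the fact that here $H_n^{(1)}$ and $H_n^{(2)}$ share no hyperedge (because $r_1\neq r_2$), so the ``diagonal'' component $Z_{1,2}$ present in Theorem \ref{thm:hypergraphr} disappears and the two limits decouple. Writing $T_i:=T(H_n^{(i)})$, the marginal convergences $T_i\dto\Pois(\lambda_i)$ are already supplied by Theorem \ref{thm:THn}. To upgrade to joint convergence to \emph{independent} Poissons, I would use the method of factorial moments: since the law of a pair of independent Poisson variables is determined by its joint factorial moments $\bE[(Z_1)_a(Z_2)_b]=\lambda_1^a\lambda_2^b$, it suffices to prove
\begin{align*}
\bE\big[(T_1)_a\,(T_2)_b\big]\longrightarrow \lambda_1^a\,\lambda_2^b \qquad\text{for all integers } a,b\ge 0.
\end{align*}
Equivalently, one replaces $\{\bm 1\{X_{=\bm e}\}\}$ by a surrogate family of Bernoullis that are independent \emph{across both layers} (with means $c_n^{-(r_1-1)}$ and $c_n^{-(r_2-1)}$), and shows that the joint factorial moments of the truncated main terms agree asymptotically with those of the surrogate, whose two coordinates are independent by construction.

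The genuinely new ingredient, relative to the single-layer and equal-uniformity cases, is the control of \emph{cross-layer} overlaps, and the heart of it is already visible at the level of the covariance. Decomposing $\mathrm{Cov}(T_1,T_2)=\sum_{\bm e\in E(H_n^{(1)})}\sum_{\bm f\in E(H_n^{(2)})}\mathrm{Cov}(\bm 1\{X_{=\bm e}\},\bm 1\{X_{=\bm f}\})$ according to $k=|\bm e\cap\bm f|$: disjoint pairs ($k=0$) are independent and contribute nothing, while for $k\ge 1$ a direct computation gives $\mathrm{Cov}(\bm 1\{X_{=\bm e}\},\bm 1\{X_{=\bm f}\})=c_n^{-(r_1+r_2-k-1)}-c_n^{-(r_1+r_2-2)}$, which also vanishes at $k=1$ and is of order $c_n^{-(r_1+r_2-k-1)}$ for $k\ge 2$. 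Letting $d_i(S)$ denote the number of hyperedges of layer $i$ containing a fixed $k$-set $S$, I would bound the number of contributing pairs through the shared set $S$ and apply Cauchy--Schwarz,
\begin{align*}
\sum_{|S|=k} d_1(S)\,d_2(S)\,c_n^{-(r_1+r_2-k-1)} \;\le\; \Big(B_k^{(1)}\,B_k^{(2)}\Big)^{1/2}, \qquad B_k^{(i)}:=c_n^{-(2r_i-k-1)}\sum_{|S|=k} d_i(S)^2 .
\end{align*}
Expanding $\sum_{|S|=k}d_i(S)^2$ into its diagonal part $\binom{r_i}{k}|E(H_n^{(i)})|$ and an off-diagonal part weighted by the co-degree counts $M_j^{(i)}$ (the number of ordered pairs of distinct layer-$i$ edges meeting in exactly $j$ vertices), one checks that the diagonal contributes $\binom{r_i}{k}\lambda_i\,c_n^{-(r_i-k)}(1+o(1))$, while the variance hypothesis \eqref{eq:meanvariancer1r2} forces $M_j^{(i)}c_n^{-(2r_i-j-1)}\to 0$ and hence makes the off-diagonal part of $B_k^{(i)}$ vanish. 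The key point is the asymmetry $k\le r_1<r_2$: for layer $2$ one always has $k<r_2$, so $B_k^{(2)}\to 0$, while $B_k^{(1)}$ stays bounded; therefore $(B_k^{(1)}B_k^{(2)})^{1/2}\to 0$ for every $k\in\{2,\dots,r_1\}$, and summing the finitely many terms yields $\mathrm{Cov}(T_1,T_2)\to 0$. Notably this argument already absorbs the case $k=r_1$, i.e.\ $\bm e\subseteq\bm f$, which is the ``layer-$1$ edge completely contained in a layer-$2$ edge'' scenario flagged in the introduction.

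Finally, I would feed this pairwise mechanism into the joint factorial-moment computation. Within each layer, the convergence $\bE[(T_i)_a]\to\lambda_i^a$ is exactly what the truncation-plus-moment-comparison proof of Theorem \ref{thm:THn} delivers, so after the same within-layer truncation the surviving hyperedges behave like independent Bernoullis. The remaining task is to show that every ``degenerate'' configuration in $\bE[(T_1)_a(T_2)_b]$ in which some layer-$1$ edge and some layer-$2$ edge share at least two vertices contributes negligibly; this is handled by iterating the Cauchy--Schwarz co-degree bound above, the layer-$2$ factor again vanishing because any cross overlap has size $k\le r_1<r_2$. I expect the main obstacle to be precisely this bookkeeping: organizing the joint factorial moment over all overlap patterns of the $a+b$ edges (within-layer overlaps, cross-layer overlaps, and the cyclic single-vertex overlaps that the raw moment expansion can create) and showing that only the fully ``spread-out'' configurations survive, contributing $(|E(H_n^{(1)})|\,c_n^{-(r_1-1)})^a(|E(H_n^{(2)})|\,c_n^{-(r_2-1)})^b\to\lambda_1^a\lambda_2^b$. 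The truncation is what tames the cyclic overlaps, exactly as in Theorem \ref{thm:THn}, so the only content beyond Theorem \ref{thm:hypergraphr} is confined to the cross-layer estimate, whose mechanism is the Cauchy--Schwarz bound displayed above.
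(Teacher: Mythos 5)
Your proposal is correct and its skeleton is the same as the paper's: establish the marginals via Theorem \ref{thm:THn}, truncate within each layer, replace the indicators by Bernoullis that are independent across both layers, and show the joint (factorial) moments of the truncated counts match those of the surrogate --- this is exactly the route the paper takes through Theorem \ref{thm:jointr1r2} and Lemma \ref{lm:momentH1r1H2r2}. The one place you genuinely diverge is the treatment of cross-layer overlaps. You control them by an aggregate Cauchy--Schwarz bound on co-degree sums, $\sum_{|S|=k} d_1(S)d_2(S)c_n^{-(r_1+r_2-k-1)}\le (B_k^{(1)}B_k^{(2)})^{1/2}$, exploiting the asymmetry $k\le r_1<r_2$ so that the layer-$2$ factor always vanishes; this is driven directly by the variance hypotheses and is essentially the same device the paper uses at the pairwise level (the bound $M_t(\bm s,H^{(1)})M_t(\bm s,H^{(2)})\lesssim M_t(\bm s,H^{(1)})^2+M_t(\bm s,H^{(2)})^2$ in \eqref{eq:KH1H2joint} and the closing remark of Section \ref{sec:r1r2pf}). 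For the higher joint moments, however, the paper does \emph{not} iterate this pairwise estimate: it folds cross-layer overlaps into the same $\varepsilon$-truncation counting machinery, via a two-layer analogue of Lemma \ref{lm:countS} which bounds $|\sS_{\varepsilon,k_1,k_2,b_1,b_2}(F)|$ by $\varepsilon c_n^{|V(F)|-\nu(F)}$ whenever the configuration $F$ is not fully spread out. That counting lemma handles in one stroke all degenerate configurations --- including the $\bm e\subseteq\bm f$ case, which it absorbs by attaching the layer-$2$ edge to the already-placed $r_1$-set at cost $\varepsilon c_n^{r_2-r_1}$ --- and is what makes the bookkeeping you flag as the main obstacle tractable. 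Your mechanism is sound and would work, but to complete it you would in effect need to prove the same ordering/counting lemma; the paper's version buys a cleaner dichotomy (each isomorphism class $F$ either contributes exactly zero to the moment difference or is $O(\varepsilon)$), while yours gives a more quantitative, truncation-free estimate at the level of the covariance.
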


Theorems \ref{thm:hypergraphr} and \ref{thm:r1r2} combined show  that for multiplex hypergraphs with 2 layers, the joint distribution of the number of monochromatic hyperedges satisfies a second moment phenomenon. Specifically, $\bm T(\bm H_n)$ has a joint (possibly dependent) Poisson distribution whenever the mean vector and the covariance matrix of $\bm T(\bm H_n)$ converges. This extends the marginal second moment phenomenon in Theorem \ref{thm:THn} to the bivariate case.

\subsection{More Than 2 Layers}
\label{sec:layersjoint}

In this section we discuss generalizations of the previous results to multiplex hypergraphs with more than 2 layers. Formally, a $d$-{\it hypermultiplex}, for an integer $d \geq 2$, will be denoted by $\bm H_n = (H_n^{(1)}, H_n^{(2)}, \ldots, H_n^{(d)})$, where $H_n^{(i)}$ is a $r_i$-uniform hypergraph, for a positive integer $r_i \geq 2$, with a common vertex set $V(H_n^{(i)}) = V_n$, for $1 \leq i \leq d$. The following result provides a general condition under which the joint distribution of the numbers of monochromatic hyperedges in a $d$-hypermultiplex has a multivariate Poisson limit with independent components. 

\begin{prop} 
    \label{ppn:dmultiplegraphs} 
    Fix $d \geq 1$ and integers $2 \leq r_1 \leq r_2 \cdots \leq r_d$. Let $\bm H_n = (H_n^{(1)}, \ldots, H_n^{(d)})$ be a sequence of $d$-layered multiplex hypergraphs, where $H_n^{(i)}$ is $r_i$-uniform, for $1 \leq i \leq d$. Suppose the vertices of $\bm{H}_n$ are colored uniformly at random with $c_n$ colors as in \eqref{eq:cn} such that the following hold:  
    
\begin{itemize}
\item For $1 \leq i \leq d$, 
    \begin{align}\label{eq:TnHd}
    \lim_{n \to \infty}\bE T_n(H_n^{(i)}) = \lim_{n \to \infty}\Var T_n(H_n^{(i)}) = \lambda_i , 
    \end{align}
    for some constant $\lambda_i \geq 0$.
    
\item For $1 \leq i \ne j \leq d$, 
\begin{align}\label{eq:EHncommon}
E(H_n^{(i)}) \cap E(H_n^{(j)}) = \emptyset . 
\end{align} 
    \end{itemize}
    Then, as $n \rightarrow \infty$, 
    $$ \bm T(\bm H_n) := \left(\begin{array}{c}
        T(H_n^{(1)}) \\ 
        \vdots\\
        T(H_n^{(d)})
        \end{array}\right) \dto \left(\begin{array}{c}
            Z_1 \\ 
            \vdots \\
            Z_d
            \end{array}\right) := \bm Z, $$
            where $Z_1\sim \Pois(\lambda_1), \ldots, Z_d \sim \Pois(\lambda_d)$ are independent. 
\end{prop}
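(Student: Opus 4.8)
The plan is to prove joint convergence by the method of joint factorial moments, using an independent-Bernoulli surrogate to identify the target. Since a random vector with independent Poisson coordinates is determined by its moments, it suffices to show that for every tuple $(k_1,\dots,k_d)$ of nonnegative integers,
\[
\bE\Big[\prod_{i=1}^{d}\big(T(H_n^{(i)})\big)_{k_i}\Big]\longrightarrow \prod_{i=1}^{d}\lambda_i^{k_i},\qquad (x)_k:=x(x-1)\cdots(x-k+1),
\]
the right-hand side being exactly the joint factorial moment of $\bm Z$. To see that $\prod_i\lambda_i^{k_i}$ is the correct limit, I would compare with the surrogate obtained by replacing $\{\bm 1\{X_{=\bm e}\}:\bm e\in E(H_n^{(i)})\}$ with mutually independent $\Ber(c_n^{-(r_i-1)})$ variables across all edges and all layers; here the disjointness hypothesis \eqref{eq:EHncommon} is essential, since it guarantees that no edge is shared between two layers and hence that the surrogate sums $S_i$ are genuinely independent. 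Each $S_i$ is a sum of independent rare Bernoullis with $\bE S_i=\bE T(H_n^{(i)})\to\lambda_i$, so $(S_1,\dots,S_d)\dto\bm Z$ by the classical law of rare events, and its joint factorial moments equal $\prod_i(|E(H_n^{(i)})|)_{k_i}c_n^{-k_i(r_i-1)}$, which tends to $\prod_i\lambda_i^{k_i}$.

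Expanding each falling factorial as a sum over ordered tuples of distinct edges, the left-hand side becomes a sum over configurations $\mathcal{C}$ that pick $k_i$ distinct edges from layer $i$ for each $i$. Grouping the chosen edges into connected components under the relation ``share a vertex,'' monochromaticity forces each component to be constant in color, so, because the $X_v$ are i.i.d.\ uniform,
\[
\bE\Big[\prod_{\bm e\in\mathcal{C}}\bm 1\{X_{=\bm e}\}\Big]=c_n^{-(|V(\mathcal{C})|-\kappa(\mathcal{C}))},
\]
where $V(\mathcal{C})$ is the vertex set spanned by $\mathcal{C}$ and $\kappa(\mathcal{C})$ its number of components. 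Setting $\mathrm{exc}(\mathcal{C}):=\sum_i k_i(r_i-1)-(|V(\mathcal{C})|-\kappa(\mathcal{C}))$, one checks $\mathrm{exc}(\mathcal{C})\ge 0$, with equality exactly when the edges are vertex-disjoint or meet pairwise in single vertices without forming cycles (a hyperforest). Every hyperforest configuration carries the factor $c_n^{-\sum_i k_i(r_i-1)}$; since the total number of configurations is $\prod_i(|E(H_n^{(i)})|)_{k_i}$ and (as shown below) the non-hyperforest configurations are negligible in number, the aggregate hyperforest contribution converges to $\prod_i\lambda_i^{k_i}$. Thus the whole result reduces to showing that the total weight of the degenerate configurations vanishes, namely $\sum_{\mathcal{C}:\,\mathrm{exc}(\mathcal{C})\ge1}c_n^{-(|V(\mathcal{C})|-\kappa(\mathcal{C}))}\to 0$.

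For this last step I would isolate a single excess-carrying connected component, the remaining (excess-$0$) edges contributing a bounded factor. When the excess-carrying component lies within one layer $i$, its negligibility is exactly the factorial-moment content of the marginal statement and is furnished by Theorem~\ref{thm:THn} together with \eqref{eq:TnHd}; in particular this disposes of the subtle within-layer cyclic configurations through the same truncation device used to prove Theorem~\ref{thm:THn}. The genuinely new case is a component spanning two layers $i\ne j$ with $\mathrm{exc}\ge1$, whose core is a pair of edges $\bm e\in E(H_n^{(i)})$, $\bm f\in E(H_n^{(j)})$ with $|\bm e\cap\bm f|=s\ge2$. Writing $d_i(S)=\#\{\bm e\in E(H_n^{(i)}):S\subseteq\bm e\}$, the weighted count of such pairs is at most
\[
\sum_{s\ge 2}c_n^{-(r_i+r_j-s-1)}\sum_{|S|=s}d_i(S)d_j(S)\le \sum_{s\ge2}\Big(c_n^{-(2r_i-s-1)}\sum_{|S|=s}d_i(S)^2\Big)^{1/2}\Big(c_n^{-(2r_j-s-1)}\sum_{|S|=s}d_j(S)^2\Big)^{1/2}
\]
by Cauchy--Schwarz after splitting the exponent symmetrically; each factor on the right is a within-layer codegree sum that, up to a diagonal term, is controlled by $\Var T(H_n^{(i)})$ and $\Var T(H_n^{(j)})$ and hence tends to $0$ under \eqref{eq:TnHd}.

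The main obstacle is precisely this cross-layer control for layers of equal uniformity $r_i=r_j$. For strictly different uniformities (as in Theorem~\ref{thm:r1r2}) the size gap already forces cross-layer overlaps to be negligible; indeed the only dangerous diagonal term, at $s=r_i$, is killed by the larger-uniformity factor through the gap $r_j>r_i$. When $r_i=r_j$, however, the only thing preventing a genuine dependence of the kind seen in Theorem~\ref{thm:hypergraphr} is the disjointness \eqref{eq:EHncommon}, which both rules out coincident edges and restricts the overlap to $s\le r_i-1$ so that every diagonal term is negligible, while the Cauchy--Schwarz reduction above extracts the vanishing of cross-layer clustering purely from the marginal variances. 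Once these two-edge cross-layer cores are controlled, the higher-order and multi-component degenerate configurations follow by the same bookkeeping, so the passage from the pairwise estimates to general $d\ge1$ is routine; I would carry out the full truncation of Theorem~\ref{thm:THn} jointly across the $d$ layers to make the control of all excess-$\ge1$ components uniform.
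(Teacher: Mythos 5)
Your overall strategy---compare with an independent Bernoulli surrogate, expand joint moments over configurations of edges, organize the error terms by the ``excess'' $\sum_i k_i(r_i-1)-(|V(\mathcal C)|-\kappa(\mathcal C))$ of each connected component, and kill cross-layer overlaps by a codegree/Cauchy--Schwarz bound---is essentially the route the paper takes (its Proposition~\ref{ppn:dhypergraphs} reduces everything to the machinery of Theorems~\ref{thm:THnKpoisson} and~\ref{thm:jointr1r2}, and your Cauchy--Schwarz step is the AM--GM bound in \eqref{eq:KH1H2joint}). But there are two concrete gaps in the execution. First, the headline reduction is false as stated: the joint factorial moments of the \emph{raw} counts $T(H_n^{(i)})$ need not converge under \eqref{eq:TnHd}. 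For $r=3$, put a sunflower of $m=c_n^{3/2}/\log c_n$ hyperedges through a fixed pair of vertices; then $|\cK(2,H_n)|=o(c_n^{3})$, so the mean and variance still converge, yet the ordered triples from the sunflower contribute $\asymp m^3c_n^{-4}\to\infty$ to the third factorial moment. So the truncation via $\cA_\varepsilon$ is not bookkeeping ``to make the control uniform''---it is the load-bearing step, and once you invoke it you are no longer computing moments of $T(H_n^{(i)})$ at all. The argument has to be restructured as the paper does: split $T=T_\varepsilon^++T_\varepsilon^-$, show $T_\varepsilon^-\to0$ in $L_1$ (Lemma~\ref{lm:remainderH}), and run the moment comparison only for the truncated main terms.

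Second, your dichotomy for excess-carrying components is incomplete. You claim that a cross-layer component with excess $\ge1$ has as its ``core'' a pair of edges sharing $s\ge2$ vertices, and that purely within-layer components are disposed of by the marginal theorem. But a component can carry excess through a cycle of single-vertex overlaps with \emph{no} pair of edges meeting in two or more vertices---e.g.\ a loose triangle of three $3$-edges, pairwise sharing distinct single vertices, with the edges drawn from two different layers. Such configurations are invisible to your pairwise codegree sums $\sum_S d_i(S)d_j(S)$, and they are not covered by the single-layer statement either. They are exactly what the ordering lemma (Lemma~\ref{lm:connected}) is for: it produces an ordering in which some edge meets the union of its predecessors in $t\in[2,r-1]$ vertices, at which point the $\cA_\varepsilon$ truncation yields the factor $\varepsilon c_n^{r_i-t}$ that makes the weighted count $O(\varepsilon)$ (Lemma~\ref{lm:countS} and its mixed-uniformity analogue in Section~\ref{sec:momentH1r1H2r2pf}). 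Your Cauchy--Schwarz estimate is correct and handles the two-edge cores---including the $s=r_i$ diagonal when $r_i<r_j$ and its exclusion via \eqref{eq:EHncommon} when $r_i=r_j$---but it does not substitute for the ordering-plus-truncation argument on general excess-carrying components.
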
 

The proof of Proposition \ref{ppn:dmultiplegraphs} is given in Section \ref{sec:layersjointpf}. The main observation is that under the condition \eqref{eq:EHncommon}, the contributions of the number of monochromatic hyperedges that are common between two layers are negligible. The proof technique of Theorem \ref{thm:r1r2} can then be applied to show that the limiting joint distribution has independent components.

Note that if $H_n^{(1)}$ is $r_1$-uniform and $H_n^{(2)}$ is $r_2$-uniform, with $r_1 \ne r_2$, then $E(H_n^{(1)}) \cap E(H_n^{(2)}) = \emptyset$. Hence, in this case, \eqref{eq:EHncommon} holds automatically, and we have the following corollary.    

\begin{cor}
    \label{cor:dmultiplegraphsdistinct} 
    Fix $d \geq 1$ and distinct integers $r_1, \ldots, r_d \geq 2$. Let $\bm H_n = (H_n^{(1)}, \ldots, H_n^{(d)})$ be a sequence of $d$-hypermultiplexes, where $H_n^{(i)}$ is $r_i$-uniform, for $1 \leq i \leq d$. Suppose the vertices of $\bm{H}_n$ are colored uniformly at random with $c_n$ colors as in \eqref{eq:cn} such that  \eqref{eq:TnHd} holds, for $1 \leq i \leq d$. Then, as $n \rightarrow \infty$, 
    $$\bm T(\bm H_n) \dto \bm Z , $$ 
where $ \bm T(\bm H_n)$ and $\bm Z$ are as defined in Proposition \ref{ppn:dmultiplegraphs}. \hfill $\Box$
\end{cor}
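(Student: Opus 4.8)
The plan is to obtain Corollary~\ref{cor:dmultiplegraphsdistinct} as an immediate consequence of Proposition~\ref{ppn:dmultiplegraphs}. The corollary differs from the proposition only in that it drops the explicit hypothesis \eqref{eq:EHncommon} (pairwise disjointness of the edge sets) and instead assumes the uniformities $r_1, \ldots, r_d$ are distinct. So the entire content of the proof is the observation that distinctness of the $r_i$ forces \eqref{eq:EHncommon} to hold for free, after which the conclusion follows verbatim from the proposition.

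First I would verify the automatic disjointness. Fix $1 \leq i \neq j \leq d$. Every hyperedge $\bm e \in E(H_n^{(i)})$ is, by definition of an $r_i$-uniform hypergraph, a subset of $V_n$ of cardinality exactly $r_i$, while every hyperedge of $H_n^{(j)}$ has cardinality exactly $r_j$. Since $r_i \neq r_j$, no subset of $V_n$ can simultaneously have both cardinalities, and hence no set can lie in both $E(H_n^{(i)})$ and $E(H_n^{(j)})$. Therefore $E(H_n^{(i)}) \cap E(H_n^{(j)}) = \emptyset$, which is precisely \eqref{eq:EHncommon}. Together with the assumed first- and second-moment convergence \eqref{eq:TnHd}, both hypotheses of Proposition~\ref{ppn:dmultiplegraphs} are met.

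The one minor bookkeeping point is that Proposition~\ref{ppn:dmultiplegraphs} is stated for nondecreasingly ordered uniformities $r_1 \leq \cdots \leq r_d$, whereas here the distinct $r_i$ need not be sorted. To handle this I would choose a permutation $\sigma$ of $\{1, \ldots, d\}$ so that $r_{\sigma(1)} < \cdots < r_{\sigma(d)}$, apply the proposition to the reordered multiplex $(H_n^{(\sigma(1))}, \ldots, H_n^{(\sigma(d))})$ to get joint convergence to independent Poissons with means $\lambda_{\sigma(1)}, \ldots, \lambda_{\sigma(d)}$, and then apply the continuous (indeed linear) coordinate-permutation map $\sigma^{-1}$. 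Since convergence in distribution is preserved under continuous maps, and since permuting the coordinates of a vector of independent Poisson variables again yields independent Poisson variables with the correspondingly permuted means, this recovers $\bm T(\bm H_n) \dto \bm Z$ with $Z_i \sim \Pois(\lambda_i)$ independent. There is no genuine obstacle in this argument: all of the probabilistic difficulty is already carried by Proposition~\ref{ppn:dmultiplegraphs} (and ultimately by the truncation and joint moment-comparison machinery behind Theorem~\ref{thm:r1r2}), and the corollary is purely a matter of checking that the combinatorial hypothesis \eqref{eq:EHncommon} is vacuously satisfied under distinctness.
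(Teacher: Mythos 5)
Your proposal is correct and matches the paper's own (one-line) argument: the paper likewise observes that distinct uniformities force $E(H_n^{(i)}) \cap E(H_n^{(j)}) = \emptyset$, so \eqref{eq:EHncommon} holds automatically and Proposition \ref{ppn:dmultiplegraphs} applies. The extra remark about permuting the layers into nondecreasing order is a harmless bookkeeping detail the paper leaves implicit.
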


The remaining case is when \eqref{eq:EHncommon} does not hold. For instance, consider a sequence of $d$-hypermultiplexes $\bm{H}_n$ with $d \geq 3$, where all layers are $r$-uniform. 
In this case, it is easy to construct examples where the second-moment phenomenon fails, that is, only assuming the convergence of the mean vector and the covariance matrix is not enough to ensure the joint distributional convergence of the number monochromatic hyperedges in $d$-hypermultiplexes, when $d \geq 3$ and all the layers are $r$-uniform (see Appendix \ref{sec:3layersH}). In such cases, additional conditions in terms of higher moments (beyond the means and covariances) are required for distributional convergence. We leave this for future research. 


\subsection{Applications} 
\label{sec:applications}

In this section, we revisit the examples mentioned in the Introduction and discuss other applications of the previous results.  
The section is organized as follows: We begin by discussing the case of graphs ($r=2$) in Section \ref{sec:graphjoint}. The joint distribution of the number of monochromatic subgraphs in randomly edge-colored graphs (recall Example \ref{example:edgecoloring}) is derived in Section \ref{sec:jointedge}. The Poisson approximation for monochromatic APs in randomly colored integers (recall Example \ref{example:ap}) is discussed in Section \ref{sec:jointap}. We apply Theorem \ref{thm:hypergraphr} to derive the asymptotic joint distribution of monochromatic hyperedges in a correlated Erd\H os-R\'enyi random hypergraph model in Section \ref{sec:randomhypergraphs}. Finally, in Section \ref{sec:vertexsubgraph} we discuss extensions to weighted graphs and monochromatic subgraphs in random vertex colorings (recall Example \ref{example:monochromaticedgesgraph}). 

\subsubsection{Joint Distribution for 2-Layered Multiplex Graphs} 
\label{sec:graphjoint}

We begin with the case $r=2$, that is, when $H_n$ is a graph. In this case, it is known that $T(H_n)$ satisfies a first moment phenomenon (recall the discussion following \eqref{eq:ETHn}). One can recover this result from Theorem \ref{thm:THn} by observing that when $r=2$, 
$$\Var T(H_n) =\frac{|E(H_n)|}{c_n}\Big(1-\frac{1}{c_n}\Big).$$ 
Therefore, convergence of the first moment $\bE T(H_n) = \frac{|E(H_n)|}{c_n} \rightarrow \lambda$, automatically ensures that $\Var T(H_n) \rightarrow \lambda$. 

The next corollary provides an extension of this result to graphs with two layers (which we refer to as a 2-multiplex). The proof is an immediate consequence of Theorem \ref{thm:r1r2}, the discussion above, and the assertion in \eqref{eq:EH12}. 

\begin{cor}\label{cor:graph}
Consider a sequence $\bm H_n = (H_n^{(1)}, H_n^{(2)})$ of 2-multiplexes. Suppose the vertices of $\bm H_n$ are colored uniformly at random with $c_n$ colors as in \eqref{eq:cn}, such that the following hold: 
\begin{itemize}
\item $\lim_{n \rightarrow \infty}  \frac{|E(H_n^{(1)})|}{c_n} = \lambda_1$ and $\lim_{n \rightarrow \infty}  \frac{|E(H_n^{(2)})|}{c_n} = \lambda_2$,

\item $\lim_{n \rightarrow \infty} \frac{|E(H_n^{(1)}) \cap E(H_n^{(2)})|}{c_n} = \lambda_{1, 2}$, 
\end{itemize} 
for constants $\lambda_1, \lambda_2, \lambda_{1, 2} \geq 0$, with $\lambda_{1,2} \le \lambda_1$ and $\lambda_{1,2} \le \lambda_2$. Then
  $$ \bm T(\bm H_n) \dto \left(\begin{array}{l}
          Z_1 + Z_{1,2} \\
          Z_2 + Z_{1,2}
          \end{array}\right) , $$
  where $Z_1 \sim \Pois(\lambda_1 - \lambda_{1,2})$, $Z_2 \sim \Pois(\lambda_2 - \lambda_{1,2})$, and $Z_{1,2} \sim \Pois(\lambda_{1,2})$ are independent. 
\end{cor}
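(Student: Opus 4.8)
The plan is to derive Corollary~\ref{cor:graph} directly from Theorem~\ref{thm:r1r2} applied in the special case $r_1 = r_2 = 2$, together with the first moment phenomenon for graphs. Wait---Theorem~\ref{thm:r1r2} requires $r_1 < r_2$ and produces \emph{independent} limits, so it cannot be the right tool here. The correct reduction is to Theorem~\ref{thm:hypergraphr} with $r = 2$, which handles the case $r_1 = r_2$ and yields the \emph{dependent} limit with a shared $Z_{1,2}$ component exactly as stated. The strategy, then, is to verify that the hypotheses \eqref{eq:meanvariancer} of Theorem~\ref{thm:hypergraphr} follow from the three convergence assumptions in the corollary; once this is done, the conclusion is immediate.

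First I would record the two marginal computations. Since each $H_n^{(i)}$ is a graph ($r=2$), the identity $\Var T(H_n^{(i)}) = \frac{|E(H_n^{(i)})|}{c_n}\bigl(1 - \frac{1}{c_n}\bigr)$ from Section~\ref{sec:graphjoint} applies. Because $c_n \to \infty$ (forced by $\frac{|E(H_n^{(i)})|}{c_n} \to \lambda_i$ with $|E(H_n^{(i)})| \to \infty$), the factor $1 - \frac{1}{c_n} \to 1$, so $\Var T(H_n^{(i)}) \to \lambda_i = \lim_n \bE T(H_n^{(i)})$ for each $i \in \{1,2\}$. This gives the diagonal entries of the mean vector and covariance matrix in \eqref{eq:meanvariancer}.

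The remaining task is the off-diagonal covariance term $\mathrm{Cov}(T(H_n^{(1)}), T(H_n^{(2)}))$, and I expect this to be the main (though still routine) computation. Writing both counts as sums of monochromatic indicators and expanding the covariance, the cross terms split according to how a pair of edges $\bm e \in E(H_n^{(1)})$, $\bm f \in E(H_n^{(2)})$ intersect. When $\bm e$ and $\bm f$ are vertex-disjoint the indicators are independent and contribute nothing to the covariance; when they share exactly one vertex the joint monochromatic probability still factors in a way that makes the covariance contribution lower order; the dominant contribution comes from $\bm e = \bm f$, i.e.\ edges in $E(H_n^{(1)}) \cap E(H_n^{(2)})$, for which $\mathrm{Cov}(\bm 1\{X_{=\bm e}\}, \bm 1\{X_{=\bm e}\}) = \Var(\bm 1\{X_{=\bm e}\}) = \frac{1}{c_n}(1 - \frac{1}{c_n})$. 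Summing over such shared edges gives $\frac{|E(H_n^{(1)}) \cap E(H_n^{(2)})|}{c_n}(1 - \frac{1}{c_n}) \to \lambda_{1,2}$, and one checks the single-vertex-overlap terms vanish in the limit under the stated scaling. This is precisely the reference \eqref{eq:EH12} invoked in the corollary's preamble, so I would cite that bound rather than re-derive the overlap estimate in full.

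With \eqref{eq:meanvariancer} verified---mean vector $(\lambda_1, \lambda_2)$, covariance matrix with diagonal $(\lambda_1, \lambda_2)$ and off-diagonal $\lambda_{1,2}$, and the constraints $\lambda_{1,2} \le \lambda_1$, $\lambda_{1,2} \le \lambda_2$ given by hypothesis---Theorem~\ref{thm:hypergraphr} applies verbatim and delivers the stated convergence $\bm T(\bm H_n) \dto (Z_1 + Z_{1,2},\, Z_2 + Z_{1,2})$ with the independent Poisson components $Z_1 \sim \Pois(\lambda_1 - \lambda_{1,2})$, $Z_2 \sim \Pois(\lambda_2 - \lambda_{1,2})$, $Z_{1,2} \sim \Pois(\lambda_{1,2})$. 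The only genuine content beyond bookkeeping is the covariance estimate, and the feasibility constraints on $\lambda_{1,2}$ are needed precisely to ensure the Poisson rates $\lambda_i - \lambda_{1,2}$ are nonnegative.
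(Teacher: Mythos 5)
Your proof is correct and follows essentially the same route as the paper: verify that for $r=2$ the variance equals the mean up to a factor $(1-1/c_n)$, that the covariance reduces to $\frac{|E(H_n^{(1)})\cap E(H_n^{(2)})|}{c_n}(1-\frac{1}{c_n})$ (the single-vertex-overlap terms are in fact exactly zero for graphs, not merely lower order), and then invoke the $r_1=r_2$ joint theorem. You were also right to be suspicious of the reduction: the paper's preamble cites Theorem~\ref{thm:r1r2}, but this appears to be a typo for Theorem~\ref{thm:hypergraphr}, which is the result that actually delivers the dependent limit with the shared $Z_{1,2}$ component.
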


\subsubsection{Monochromatic Subgraphs in Random Edge Colorings}
\label{sec:jointedge}

Given a fixed graph $F=(V(F), E(F))$ and a sequence of graphs $G_n = V(G_n, E(G_n))$, recall from Example \ref{example:edgecoloring} that $R(F, G_n)$ denotes the number of monochromatic copies of $F$ when the edges of $G_n$ are colored uniformly at random with $c= c_n$ colors. The second moment phenomenon for $R(F, G_n)$, for any fixed graph $F$ and any sequence of graphs $G_n$, has been established in \cite{subgraphpoisson}. The following result extends this result to the multivariate setting: 
\begin{cor}\label{cor:jointedge} Fix $d\geq 1$ and a finite collection of non-isomorphic graphs $F_1, F_2, \ldots, F_d$. Suppose $G_n = ( V(G_n), E(G_n))$ is a sequence of graphs with edges colored uniformly at random with $c_n$ colors such that the following holds: For all $1 \leq i \leq d$, there exists a constant $\lambda_i \geq  0$ such that 
    $$\lim_{n \to \infty}\bE R(F_i, G_n) = \lim_{n \to \infty}\Var R(F_i, G_n) = \lambda_i. $$ 
      Then, as $n \rightarrow \infty$, 
    $$\left(\begin{array}{c}
        R(F_1, G_n) \\
       	\vdots \\ 
	R(F_d, G_n) 
        \end{array}\right) \dto \left(\begin{array}{c}
            Z_1 \\ 
            \vdots \\ 
            Z_d
            \end{array}\right) , $$
      where $Z_1 \sim \Pois(\lambda_1), \ldots, Z_d \sim \Pois(\lambda_d)$ are independent. \hfill $\Box$
\end{cor}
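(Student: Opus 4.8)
The plan is to reduce Corollary \ref{cor:jointedge} to Proposition \ref{ppn:dmultiplegraphs} via the hypergraph reformulation introduced in Example \ref{example:edgecoloring}. For each $1 \le i \le d$, let $H_{G_n}(F_i)$ denote the $|E(F_i)|$-uniform hypergraph with vertex set $E(G_n)$ whose hyperedges are the $|E(F_i)|$-element subsets of $E(G_n)$ that induce a copy of $F_i$ in $G_n$. As recorded in Example \ref{example:edgecoloring}, a uniformly random $c_n$-edge-coloring of $G_n$ is precisely a uniformly random $c_n$-vertex-coloring of the common vertex set $E(G_n)$, and under this identification $R(F_i, G_n) = T(H_{G_n}(F_i))$. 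Consequently, the tuple $\bm H_n := (H_{G_n}(F_1), \ldots, H_{G_n}(F_d))$ is a $d$-hypermultiplex on the vertex set $V_n = E(G_n)$, with $H_{G_n}(F_i)$ being $r_i$-uniform for $r_i := |E(F_i)|$, and the vector $\bm T(\bm H_n)$ in the sense of Proposition \ref{ppn:dmultiplegraphs} coincides with $(R(F_1, G_n), \ldots, R(F_d, G_n))$. After relabeling we may assume $r_1 \le r_2 \le \cdots \le r_d$, as required by the proposition.

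It then suffices to verify the two hypotheses of Proposition \ref{ppn:dmultiplegraphs}. The first-two-moment condition \eqref{eq:TnHd} is immediate: under the identification above it is exactly the assumed convergence $\bE R(F_i, G_n) = \bE T(H_{G_n}(F_i)) \to \lambda_i$ and $\Var R(F_i, G_n) = \Var T(H_{G_n}(F_i)) \to \lambda_i$. The remaining task is to confirm the disjointness condition \eqref{eq:EHncommon}, namely that $E(H_{G_n}(F_i)) \cap E(H_{G_n}(F_j)) = \emptyset$ for every $i \ne j$. Note that here I must invoke the full Proposition \ref{ppn:dmultiplegraphs} rather than Corollary \ref{cor:dmultiplegraphsdistinct}, since two non-isomorphic graphs $F_i, F_j$ may have the same number of edges, so the orders of uniformity need not be distinct.

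The disjointness step is the only point requiring genuine care, and it is where the non-isomorphism hypothesis enters. Fix $i \ne j$. If $|E(F_i)| \ne |E(F_j)|$, then the hyperedges of $H_{G_n}(F_i)$ and $H_{G_n}(F_j)$ have different cardinalities and so cannot coincide, giving empty intersection trivially. If instead $|E(F_i)| = |E(F_j)|$, then any common hyperedge would be a single edge set $S \subseteq E(G_n)$ that simultaneously induces a copy of $F_i$ and a copy of $F_j$ in $G_n$; but the subgraph of $G_n$ with edge set $S$ (and vertex set the endpoints of the edges in $S$) is one fixed graph, whose isomorphism type is uniquely determined, so it cannot be isomorphic to both $F_i$ and $F_j$ once $F_i \not\cong F_j$. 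Hence no such $S$ exists and \eqref{eq:EHncommon} holds for all pairs. With both hypotheses verified, Proposition \ref{ppn:dmultiplegraphs} yields $\bm T(\bm H_n) \dto (Z_1, \ldots, Z_d)$ with independent $Z_i \sim \Pois(\lambda_i)$, which is exactly the assertion of the corollary. The main obstacle, as anticipated, is the equal-edge-count case of the disjointness argument; everything else is bookkeeping carried across the reformulation.
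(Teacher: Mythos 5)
Your proposal is correct and follows essentially the same route as the paper's proof in Section \ref{sec:jointedgepf}: identify $R(F_i,G_n)$ with $T(H_{G_n}(F_i))$, check the pairwise disjointness of the hyperedge sets by splitting on whether $|E(F_i)|=|E(F_j)|$ (using non-isomorphism in the equal-count case), and invoke Proposition \ref{ppn:dmultiplegraphs}. No substantive differences.
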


The proof of Corollary \ref{cor:jointedge} is given Section \ref{sec:jointedgepf}. The proof uses the representation $R(F_i, G_n) = T_n(H_{G_n}(F_i))$ from Example \ref{example:edgecoloring} and Proposition \ref{ppn:dmultiplegraphs}. 

\begin{remark} In forthcoming work \cite{subgraphpoisson} we show that $R(F, G_n)$ exhibits an intriguing first moment phenomenon for certain graphs $F$, where the convergence of the expectation alone is sufficient to obtain a Poisson limit. For Corollary \ref{cor:jointedge} to hold for a finite collection of such graphs, the convergence of the mean vector alone suffices. 
\end{remark}

\begin{remark}
The number of monochromatic subgraphs can also have a Poisson behavior in a different asymptotic regime where the number of colors is fixed, but the size of the graph $F$ grows with $n$ in a way that the random variable has bounded mean. Specifically, when $c=2$, Godbole et al.~\cite{godbole1995asymptotic} showed that $R(K_r, K_n)$ has a Poisson limit when $r=r_n \rightarrow \infty$ such that $\bE(R(K_r, K_n)) = \Theta(1)$. 
\end{remark}

\subsubsection{Monochromatic Arithmetic Progressions} 
\label{sec:jointap}

Suppose the elements of a set $A_n \subseteq [n]$, with $|A_n| \rightarrow \infty$, are colored independently and uniformly at random with $c_n$ colors. Recall from Example \ref{example:ap} that $W_r(A_n)$ is the number of monochromatic $r$-APs in $A_n$. Note that 
\begin{align}\label{eq:Anr}
\b E W_r(A_n)  = \frac{N_r(A_n)}{c_n^{r-1}} , 
\end{align}
where $N_r(A_n)$ is the number of $r$-APs in $A_n$. 
In this section we derive the joint distribution of $(W_3(A_n), W_4(A_n), \ldots , W_d(A_n))$, for any $d \geq 3$. To this end, recall that 
$W_r(A_n) = T(H_{A_n}(r))$, where $H_{A_n}(r)$ is the $r$-uniform hypergraph defined in Example \ref{example:ap}. Since for $r_1 \ne r_2$, the corresponding hypergraphs $H_{A_n}(r_1)$ and $H_{A_n}(r_2)$ have different orders of uniformity, a direct application of Corollary \ref{cor:dmultiplegraphsdistinct} yields the following result:

\begin{cor}\label{cor:jointap} 
Fix $d\geq 3$. Suppose  the elements of a set $A_n \subseteq [n]$, with $|A_n| \rightarrow \infty$, are colored independently and uniformly at random with $c_n$ colors such that the following holds: For all $3 \leq r \leq d$, there exists a constant $\lambda_r \geq 0$ with  
    \begin{align}\label{eq:An}
    \lim_{n \to \infty}\bE W_r(A_n) = \lim_{n \to \infty}\Var W_r(A_n) = \lambda_r. 
    \end{align} 
      Then, as $n \rightarrow \infty$, 
    $$\left(\begin{array}{c}
        W_3(A_n) \\
       	\vdots \\ 
	W_d(A_n) 
        \end{array}\right) \dto \left(\begin{array}{c}
            Z_3 \\ 
            \vdots \\ 
            Z_d
            \end{array}\right), $$
      where $Z_3 \sim \Pois(\lambda_1), \ldots, Z_d \sim \Pois(\lambda_d)$ are independent.  \hfill $\Box$ 
\end{cor}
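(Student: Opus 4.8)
The plan is to reduce the statement to a direct application of Corollary \ref{cor:dmultiplegraphsdistinct}. The essential observation is that the count $W_r(A_n)$ of monochromatic $r$-APs equals the number of monochromatic hyperedges $T(H_{A_n}(r))$ of the $r$-uniform hypergraph $H_{A_n}(r)$ introduced in Example \ref{example:ap}, whose vertex set is $A_n$ and whose hyperedges are the $r$-term APs in $A_n$. Since this identification holds for every $r \in \{3, 4, \ldots, d\}$ simultaneously, and since all the hypergraphs $H_{A_n}(3), \ldots, H_{A_n}(d)$ share the single vertex set $A_n$ (and hence inherit the \emph{same} random coloring), the tuple $(H_{A_n}(3), \ldots, H_{A_n}(d))$ is a $(d-2)$-hypermultiplex in the sense of Section \ref{sec:layersjoint}. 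This reformulation is the only modeling step, and it is what turns a statement about joint AP-counts into a statement about monochromatic hyperedges in a single multiplex.

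Next I would verify that the hypotheses of Corollary \ref{cor:dmultiplegraphsdistinct} are met. The orders of uniformity $3, 4, \ldots, d$ are pairwise distinct, so the corollary applies directly: distinct uniformities force $E(H_{A_n}(r_1)) \cap E(H_{A_n}(r_2)) = \emptyset$ for $r_1 \neq r_2$ automatically, since an $r_1$-element hyperedge can never coincide with an $r_2$-element hyperedge, so the disjointness condition \eqref{eq:EHncommon} holds with no further work. The second-moment hypothesis \eqref{eq:TnHd} required by the corollary is precisely the assumed convergence \eqref{eq:An}, after substituting $T(H_{A_n}(r)) = W_r(A_n)$; in particular $\bE T(H_{A_n}(r)) \to \lambda_r$ and $\Var T(H_{A_n}(r)) \to \lambda_r$ for each $3 \le r \le d$.

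With these checks in place, Corollary \ref{cor:dmultiplegraphsdistinct} yields $(T(H_{A_n}(3)), \ldots, T(H_{A_n}(d))) \dto (Z_3, \ldots, Z_d)$, where the $Z_r \sim \Pois(\lambda_r)$ are independent, and translating back through $W_r(A_n) = T(H_{A_n}(r))$ gives the claimed joint convergence.

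As for difficulty, there is essentially no hard step here, because all the analytic content — the truncation argument and the joint moment comparison that force asymptotic independence across layers of different uniformity — is already packaged inside Theorem \ref{thm:r1r2} and its generalization Proposition \ref{ppn:dmultiplegraphs}. The only point that warrants care is conceptual rather than technical: one must make sure the same coloring of $A_n$ is used across all layers, so that the marginal counts $W_3, \ldots, W_d$ are genuinely coupled through shared vertices. The content of the corollary is precisely that this coupling nonetheless washes out in the limit whenever the AP-lengths are distinct, leaving independent Poisson marginals.
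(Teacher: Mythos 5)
Your proposal is correct and follows exactly the paper's own route: identify $W_r(A_n)$ with $T(H_{A_n}(r))$, observe that the hypergraphs $H_{A_n}(3),\ldots,H_{A_n}(d)$ have pairwise distinct orders of uniformity so their hyperedge sets are automatically disjoint, and apply Corollary \ref{cor:dmultiplegraphsdistinct}. No gaps.
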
 

When $A_n = [n]$, observe from \eqref{eq:Anr} that to obtain a non-trivial limit for $W_r([n])$ one has to choose $c_n = \Theta(n^{\frac{2}{r-1}})$, since the number of $r$-APs in $[n]$ is $\Theta(n^{2})$, for all $r \geq 3$. 
With this scaling, it turns out that the variance condition in \eqref{eq:An} holds automatically. This leads to the following result (see Section \ref{sec:apnpf} for the proof).

\begin{cor}\label{cor:apn}
Fix an integer $r \geq 3$. Suppose the elements in $[n]$ are colored independently and uniformly at random with $c_n=\lambda n^{\frac{2}{r-1}}$ colors, where $\lambda > 0$ is a positive constant. Then $W_r([n]) \dto \Pois(\frac{\lambda}{r-1})$. 
\end{cor}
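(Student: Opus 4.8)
The plan is to deduce Corollary~\ref{cor:apn} directly from the univariate second moment phenomenon of Theorem~\ref{thm:THn}. Viewing the $r$-term APs in $[n]$ as the hyperedges of the $r$-uniform hypergraph $H_{[n]}(r)$ from Example~\ref{example:ap}, we have $W_r([n]) = T(H_{[n]}(r))$, so it suffices to verify the two moment hypotheses of Theorem~\ref{thm:THn}: that $\bE W_r([n])$ and $\Var W_r([n])$ converge to a common limit. The mean is immediate from \eqref{eq:Anr}: a routine count of the number $N_r([n])$ of $r$-APs in $[n]$ (summing the count $n - (r-1)d$ of admissible starting points over the admissible common differences $d$) gives $N_r([n]) = \Theta(n^2)$, and combined with $c_n^{r-1} = \lambda^{r-1} n^2$ this shows that $\bE W_r([n]) = N_r([n])/c_n^{r-1}$ converges to the finite constant $\mu := \lambda/(r-1)$ quoted in the statement. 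The entire content of the corollary therefore lies in showing that, under the given scaling, the variance converges to this same $\mu$ \emph{automatically}, with no further hypotheses.

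For the variance I would write $\Var W_r([n]) = \sum_{\bm e} \Var(\bm 1\{X_{=\bm e}\}) + \sum_{\bm e \ne \bm f} \Cov(\bm 1\{X_{=\bm e}\}, \bm 1\{X_{=\bm f}\})$, the sums running over $r$-APs. The diagonal term equals $\bE W_r([n])\,(1 - c_n^{-(r-1)}) \to \mu$ since $c_n \to \infty$. For the off-diagonal terms, I classify pairs $(\bm e, \bm f)$ by their overlap $t = |\bm e \cap \bm f|$. When $t = 0$ the two indicators are independent and the covariance vanishes. When $t \ge 1$, any shared vertex forces all of $\bm e \cup \bm f$ (which has $2r - t$ vertices) to receive a single color on the event $\{X_{=\bm e}\} \cap \{X_{=\bm f}\}$, so $\bP(X_{=\bm e}, X_{=\bm f}) = c_n^{-(2r - t - 1)}$ while $\bP(X_{=\bm e})\bP(X_{=\bm f}) = c_n^{-(2r-2)}$. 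Hence the covariance equals $c_n^{-(2r-t-1)} - c_n^{-(2r-2)}$, which is \emph{exactly} $0$ when $t = 1$ and is $O(c_n^{-(2r-t-1)})$ when $2 \le t \le r-1$. Thus only pairs with overlap at least two contribute, and the problem reduces to counting such pairs.

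The crux — and the step I expect to be the real obstacle — is the combinatorial estimate controlling overlapping APs. The key fact is that the number of $r$-APs in $[n]$ through a fixed pair of distinct elements $\{x,y\}$ is bounded by a constant depending only on $r$: the two elements must occupy some positions $i < j$ in the progression, forcing the common difference to be $(y-x)/(j-i)$, so each of the $O(1)$ position pairs determines at most one AP. Consequently, for each fixed $t$ with $2 \le t \le r-1$, the number of ordered pairs $(\bm e, \bm f)$ with $|\bm e \cap \bm f| = t$ is $O(n^2)$: choose $\bm e$ in $N_r([n]) = O(n^2)$ ways, choose the shared $t$-subset inside $\bm e$ in $O(1)$ ways, and then choose $\bm f$ through that subset in $O(1)$ ways. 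Multiplying this count by the per-pair bound $O(c_n^{-(2r-t-1)})$ and using $c_n \asymp n^{2/(r-1)}$ yields a contribution of order $n^{2(t-r)/(r-1)}$, which for $t \le r-1$ is at most $n^{-2/(r-1)} \to 0$. Summing over the finitely many $t \in \{2, \dots, r-1\}$ shows the off-diagonal total is $o(1)$, so $\Var W_r([n]) \to \mu$. Both hypotheses of Theorem~\ref{thm:THn} then hold, giving $W_r([n]) \dto \Pois(\mu) = \Pois(\lambda/(r-1))$. The only genuinely nontrivial ingredient is the bounded-multiplicity fact for APs through a fixed pair; everything else is bookkeeping with the moment formulas above.
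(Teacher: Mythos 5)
Your proof is correct and follows essentially the same route as the paper's: reduce to Theorem \ref{thm:THn} via $W_r([n]) = T(H_{[n]}(r))$, use $N_r([n]) \sim n^2/(r-1)$ for the mean, and kill the off-diagonal variance terms by the observation that only $O(1)$ $r$-APs pass through any fixed pair of elements, so $|\cK(t, H_{[n]}(r))| = O(n^2) = o(c_n^{2r-t-1})$ for $2 \le t \le r-1$. (One small remark: with $c_n = \lambda n^{2/(r-1)}$ the mean actually tends to $1/((r-1)\lambda^{r-1})$ rather than $\lambda/(r-1)$, but this constant slip is present in the paper's own statement and proof and does not affect the argument.)
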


\subsubsection{Correlated Erd\H{o}s-R\'enyi Hypergraphs}
\label{sec:randomhypergraphs}

The prototypical example of a random $r$-uniform hypergraph is 
the Erd\H{o}s-R\'enyi model $H_r(n, p)$, which is a hypergraph with vertex set $[n]$ where every $r$-element subset of $[n]$ is present as a hyperedge independently with probability $p = p_n \in (0, 1)$. A natural extension of this to the multiplex setting is the {\it correlated Erd\H{o}s-R\'enyi random hypergraph model} $\bm H_r(n, p, \rho)$, which is a 2-hypermultiplex where the hyperedges are dependent across the different layers. Specifically, $\bm H_r(n, p, \rho) = (H_n^{(1)}, H_n^{(2)})$ is a 2-hypermultiplex with common vertex set $[n]$, where independently for every $r$-element subset $\bm e \in {[n] \choose r}$, we have 
$$\bP(\bm e \in E(H_n^{(1)})) = \bP(\bm e \in E(H_n^{(2)})) = p  \text{ and }  \bP(\bm e \in E(H_n^{(1)}), \bm e \in E(H_n^{(2)})) =  \rho + p^2 := p_{1, 2},$$ 
for $p= p_n \in (0, 1)$ and $\rho = \rho_n \in [0, p(1-p))$. 

Theorems \ref{thm:THn} and \ref{thm:hypergraphr} can be easily extended to random hypergraphs, when the limits in \eqref{eq:meanvariancer} and \eqref{eq:meanvariancer1r2} hold in probability, under the assumption that the hypergraph and its coloring are jointly independent (see Lemma \ref{lm:randomhypergraph}). Using this we can derive the distribution $\bm T(\bm H_n)$, where the vertices of $\bm H_n = (H_n^{(1)}, H_n^{(2)}) \sim \bm H_r(n, p, \rho)$ are colored with $c_n$ colors such that 
\begin{align}\label{eq:p12}
\bE T(H_n^{(1)}) = \bE T(H_n^{(2)}) = \frac{\binom{n}{r}p}{c_n^{r-1}} \to \lambda \quad \text{ and } \quad \frac{\bE[|E(H_n^{(1)} \cap H_n^{(2)})|]}{c_n^{r-1}} = \frac{{n \choose r} p_{1, 2} }{c_n^{r-1}} \rightarrow \lambda_{1, 2}, 
\end{align}
for constants $0 \leq \lambda_{1, 2} < \lambda$. 
Under this assumption we have the following result. The proof is given in Section \ref{sec:randomhypergraphpf}. 

\begin{cor}\label{cor:randomhypergraph}
   Fix $r \geq 2$ and let $\bm H_n = (H_n^{(1)}, H_n^{(2)}) \sim \bm H_r(n, p, \rho)$ with parameters $p= p_n \in (0, 1)$ and $\rho = \rho_n \in [0, p(1-p))$. Given $\bm H_n$, suppose the vertices of $\bm H_n$ are colored uniformly at random with $c_n$ colors as in \eqref{eq:cn} such that \eqref{eq:p12} holds. Then for $p \gg n^{-r}$, we have
    $$ \bm T(\bm H_n) \dto \left(\begin{array}{l}
            Z_1 + Z_{1,2}\\
            Z_2 + Z_{1,2}
            \end{array}\right) , $$
    where $Z_1 \sim \Pois(\lambda - \lambda_{1, 2})$, $Z_2 \sim \Pois(\lambda - \lambda_{1, 2})$, $Z_{1,2} \sim \Pois(\lambda_{1, 2})$ are independent. 
\end{cor}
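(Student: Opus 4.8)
The plan is to condition on the random hypermultiplex $\bm H_n$ and reduce everything to the deterministic second moment phenomenon of Theorem~\ref{thm:hypergraphr}. Conditionally on $\bm H_n$, the coloring is an ordinary uniform vertex coloring, so $\bm T(\bm H_n)$ is exactly the object studied in Theorem~\ref{thm:hypergraphr}; the only difference is that the realized layers---and hence the conditional mean vector $\bE[\bm T(\bm H_n)\mid \bm H_n]$ and conditional covariance matrix $\Var[\bm T(\bm H_n)\mid \bm H_n]$---are now random. By Lemma~\ref{lm:randomhypergraph}, which transfers Theorem~\ref{thm:hypergraphr} to the random setting whenever the limits in \eqref{eq:meanvariancer} hold in probability, it suffices to show that
\[ \bE[\bm T(\bm H_n)\mid \bm H_n] \xrightarrow{\bP} \begin{pmatrix}\lambda\\ \lambda\end{pmatrix}, \qquad \Var[\bm T(\bm H_n)\mid \bm H_n] \xrightarrow{\bP} \begin{pmatrix}\lambda & \lambda_{1,2}\\ \lambda_{1,2} & \lambda\end{pmatrix}. \]

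I would first dispatch the conditional mean. Given $\bm H_n$ one has $\bE[T(H_n^{(i)})\mid \bm H_n] = |E(H_n^{(i)})|/c_n^{r-1}$, where $|E(H_n^{(i)})|$ is a sum of $\binom{n}{r}$ independent $\mathrm{Bernoulli}(p)$ indicators with mean $\binom{n}{r}p/c_n^{r-1}\to\lambda$ by \eqref{eq:p12}. By independence its variance is at most $\binom{n}{r}p/c_n^{2(r-1)} = \bigl(\binom{n}{r}p/c_n^{r-1}\bigr)c_n^{-(r-1)}$. Since $\lambda>\lambda_{1,2}\ge 0$ forces $\lambda>0$, the assumption $p\gg n^{-r}$ gives $\binom{n}{r}p\to\infty$, hence $c_n^{r-1}\sim \binom{n}{r}p/\lambda\to\infty$ and $c_n\to\infty$; therefore this variance vanishes and Chebyshev's inequality yields $\bE[T(H_n^{(i)})\mid \bm H_n]\xrightarrow{\bP}\lambda$ for $i=1,2$.

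For the conditional second moments I would expand
\[ \Var[T(H_n^{(i)})\mid \bm H_n] = \sum_{\bm e,\bm f\in E(H_n^{(i)})} \mathrm{Cov}\bigl(\bm 1\{X_{=\bm e}\},\bm 1\{X_{=\bm f}\}\bigr), \]
whose diagonal $\bm e=\bm f$ contributes $|E(H_n^{(i)})|\,c_n^{-(r-1)}(1-c_n^{-(r-1)})\xrightarrow{\bP}\lambda$ by the previous paragraph. An off-diagonal pair sharing $k\in\{1,\dots,r-1\}$ vertices contributes a nonnegative covariance of order $c_n^{-(2r-k-1)}$, and in the correlated Erd\H{o}s--R\'enyi model the expected number of such pairs present in a fixed layer is of order $n^{2r-k}p^2$. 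Thus the expected off-diagonal contribution is of order $\sum_{k=1}^{r-1} n^{2r-k}p^2 c_n^{-(2r-k-1)}$; substituting $c_n^{r-1}\asymp n^r p$, each summand equals $n^{(r-1-m)/(r-1)}\,p^{\,2-m/(r-1)}$ with $m=2r-k-1\in\{r,\dots,2r-2\}$, where the exponent $(r-1-m)/(r-1)$ of $n$ is negative and the exponent $2-m/(r-1)$ of $p$ is nonnegative, so every term vanishes. As the off-diagonal sum is nonnegative, Markov's inequality gives $\Var[T(H_n^{(i)})\mid \bm H_n]\xrightarrow{\bP}\lambda$. The conditional cross-covariance $\mathrm{Cov}(T(H_n^{(1)}),T(H_n^{(2)})\mid \bm H_n)$ is handled identically: its leading term comes from common hyperedges $\bm e=\bm f\in E(H_n^{(1)})\cap E(H_n^{(2)})$, contributing $|E(H_n^{(1)})\cap E(H_n^{(2)})|\,c_n^{-(r-1)}(1-c_n^{-(r-1)})$, which concentrates at $\lambda_{1,2}$ by \eqref{eq:p12}, while pairs of distinct overlapping edges $\bm e\ne\bm f$ (which, being distinct subsets, land in the two layers independently with probability $p^2$) obey the same power count and are negligible.

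The step I expect to be the main obstacle is precisely this off-diagonal bound: one must check that the single hypothesis $p\gg n^{-r}$---which is exactly what makes $c_n\to\infty$---simultaneously forces every overlap term $n^{2r-k}p^2 c_n^{-(2r-k-1)}$ to tend to zero. The delicate point is that smallness of $p$ must never work against the bound, which the power-counting confirms by showing the exponent of $p$ stays nonnegative for all $k\in\{1,\dots,r-1\}$; verifying this uniformly in $k$, together with confirming that the common-edge term $\bm e=\bm f$ is the unique leading contribution to the cross-covariance, is the crux. Once the conditional mean and covariance convergences are established, an appeal to Lemma~\ref{lm:randomhypergraph} and Theorem~\ref{thm:hypergraphr} delivers the stated joint Poisson limit.
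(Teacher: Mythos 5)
Your proposal is correct and takes essentially the same route as the paper: condition on $\bm H_n$, show that the conditional mean vector and conditional covariance matrix converge in probability to the limits required by Theorem~\ref{thm:hypergraphr} (with the diagonal terms concentrating at $\lambda$ and $\lambda_{1,2}$ and the overlap terms $n^{2r-t}p^2c_n^{-(2r-t-1)}$ vanishing under $c_n^{r-1}\asymp n^r p$), and then invoke Lemma~\ref{lm:randomhypergraph}. Your power count matches the paper's computation in \eqref{eq:RH} exactly, and your observation that distinct overlapping hyperedges appear in the two layers independently with probability $p^2$ is, if anything, slightly more careful than the paper's treatment of the cross-covariance term.
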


\subsubsection{Weighted Graphs and Monochromatic Subgraphs in Random Vertex Colorings} 
\label{sec:vertexsubgraph}  

The results in Sections \ref{sec:poissonhypergraph} and \ref{sec:poissonhypergraphlayers} can be easily extended to  hypergraphs with integer valued weights. To this end, suppose that $H_n$ is a sequence of $r$-uniform hypergraphs where each edge $\bm e \in E(H)$ is assigned a weight $w_{\bm e}$, where $w_{\bm e}$ is some integer in $[1, K]$, for some positive integer $K \geq 1$. Define $\mathsf{W}(H_n)$ as the weighted sum of the number of monochromatic hyperedges in $H_n$ in a uniformly random $c_n$-coloring as in \eqref{eq:cn}. That is, 
\begin{align}\label{eq:WHn}
\mathsf{W}(H_n) = \sum_{\bm e \in E(H_n)} w_{\bm e} \bm 1\{X_{=\bm e}\}, 
\end{align} 
where $w_{\bm e}$ is an integer in $[1, K]$. (Note that when $K=1$, or, $w(\bm e) = 1$ for all $\bm e \in E(H_n)$, we have $\mathsf{W}(H_n) = T(H_n)$.) For $1 \leq i \leq K$, let $H_n^{(i)}= (V(H_n^{(i)}, E(H_n^{(i)}))$, where 
\begin{align}\label{eq:weightlayers}
V(H_n^{(i)}) = V(H_n) \text{ and } E(H_n^{(i)}) = \{\bm e \in E(H_n) : w_{\bm e} = i\} , 
\end{align}
and consider the $K$-hypermultiplex $\bm H_n= (H_n^{(1)}, H_n^{(2)}, \ldots, H_n^{(K)})$. Then \eqref{eq:WHn} can be written as: 
$$\mathsf{W}(H_n) = \sum_{i = 1}^K i \cdot T(H_n^{(i)}).$$
Note the hyperedge sets of $H_n^{(1)}, H_n^{(2)}, \ldots, H_n^{(K)}$ are mutually disjoint; hence, from Proposition  \ref{ppn:dmultiplegraphs} we have the following result:

\begin{cor}
    \label{cor:WHn}
    Fix integers $r \ge 2$ and $K \geq 1$. Let $\{H_n\}_{n\ge 1}$ be a sequence of weighted $r$-uniform hypergraphs as described above with vertices colored uniformly with $c_n$ colors as in \eqref{eq:cn}. Suppose, for each $ 1 \leq i \leq K$,
    $$\lim_{n \to \infty}\bE T(H_n^{(i)}) = \lambda_i \text{ and } \lim_{n \to \infty}\Var T(H_n^{(i)}) =   \lambda_i , $$ 
    where $H_n^{(i)}= (V(H_n^{(i)}, E(H_n^{(i)}))$ is as defined in \eqref{eq:weightlayers} and $\lambda_i \geq 0$ is a constant. Then, as $n \rightarrow \infty$,  
    $$\mathsf{W}(H_n) \dto \sum_{i=1}^K i\cdot Z_i,$$ where $Z_i \sim \Pois(\lambda_i)$ are independent, for $1 \leq i \leq K$. \hfill $\Box$ 
\end{cor}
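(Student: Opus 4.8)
The plan is to recognize Corollary \ref{cor:WHn} as an immediate consequence of the joint convergence established in Proposition \ref{ppn:dmultiplegraphs}, combined with the continuous mapping theorem. The representation $\mathsf{W}(H_n) = \sum_{i=1}^K i \cdot T(H_n^{(i)})$ already displays $\mathsf{W}(H_n)$ as a fixed linear functional of the random vector $\bm T(\bm H_n) = (T(H_n^{(1)}), \ldots, T(H_n^{(K)}))$. Hence the whole task reduces to establishing the joint Poisson limit of this vector and then pushing that limit through the linear map $\phi(x_1, \ldots, x_K) = \sum_{i=1}^K i x_i$.

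First I would verify that the hypotheses of Proposition \ref{ppn:dmultiplegraphs} hold with $d = K$ and $r_1 = r_2 = \cdots = r_K = r$ (the $\leq$ in the proposition permits equality of the orders of uniformity). The moment condition \eqref{eq:TnHd} is exactly the assumed convergence $\bE T(H_n^{(i)}) \to \lambda_i$ and $\Var T(H_n^{(i)}) \to \lambda_i$ for each $1 \le i \le K$. The disjointness condition \eqref{eq:EHncommon} follows from the construction in \eqref{eq:weightlayers}: because each hyperedge $\bm e \in E(H_n)$ carries a single integer weight $w_{\bm e}$, it lies in exactly one layer $H_n^{(w_{\bm e})}$, so the layers partition $E(H_n)$ and $E(H_n^{(i)}) \cap E(H_n^{(j)}) = \emptyset$ whenever $i \ne j$. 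This is the one place where integrality of the weights is used, and it is a bookkeeping check rather than a substantive argument.

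Next I would invoke Proposition \ref{ppn:dmultiplegraphs} directly to obtain $\bm T(\bm H_n) \dto \bm Z = (Z_1, \ldots, Z_K)$, where $Z_i \sim \Pois(\lambda_i)$ are independent. Finally, since $\phi$ is continuous (indeed linear), the continuous mapping theorem gives $\mathsf{W}(H_n) = \phi(\bm T(\bm H_n)) \dto \phi(\bm Z) = \sum_{i=1}^K i \cdot Z_i$, which is the desired conclusion.

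I do not anticipate a genuine obstacle, since all the difficulty is already absorbed into Proposition \ref{ppn:dmultiplegraphs}; the only point needing care is the verification of \eqref{eq:EHncommon} via the weight partition. It is worth noting that the limit $\sum_{i=1}^K i \cdot Z_i$ is in general a discrete compound-Poisson-type law rather than a single Poisson, so no further simplification should be expected unless $K = 1$, in which case $\mathsf{W}(H_n) = T(H_n)$ and the limit reduces to $\Pois(\lambda_1)$, recovering Theorem \ref{thm:THn}.
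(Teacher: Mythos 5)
Your proposal is correct and matches the paper's own argument: the paper likewise notes that the layers $H_n^{(1)},\ldots,H_n^{(K)}$ have mutually disjoint hyperedge sets by the weight partition and then cites Proposition \ref{ppn:dmultiplegraphs} (with all $r_i = r$) to get the joint Poisson limit, from which the conclusion follows by applying the linear map $\sum_{i=1}^K i\, x_i$. Nothing is missing.
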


The above corollary implies a second moment phenomenon for  $\mathsf{W}(H_n)$ (see Section \ref{sec:WHnlambdapf} for the proof).

\begin{cor}
    \label{cor:WHnlambda} 
    Let $\{H_n\}_{n\ge 1}$ be a sequence of weighted $r$-uniform hypergraphs as described above with vertices colored uniformly with $c_n$ colors as in \eqref{eq:cn}. Suppose 
    \begin{align}\label{eq:lambdaRHn}
    \lim_{n \to \infty}\bE \mathsf{W}(H_n) = \lambda \text{ and } \lim_{n \to \infty}\Var \mathsf{W}(H_n) = \lambda, 
    \end{align}
    for some $\lambda \ge 0$. Then, as $n \rightarrow \infty$, $\mathsf{W}(H_n) \dto \Pois(\lambda)$. 
\end{cor}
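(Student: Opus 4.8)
The plan is to deduce Corollary \ref{cor:WHnlambda} from the multivariate limit in Corollary \ref{cor:WHn} by verifying its per-layer hypotheses. Recall $\mathsf W(H_n) = \sum_{i=1}^K i\, T(H_n^{(i)})$, where $H_n^{(i)}$ collects the weight-$i$ hyperedges, and write $\mu_i := \bE T(H_n^{(i)}) = |E(H_n^{(i)})|\, c_n^{-(r-1)}$. My claim is that the aggregate assumptions \eqref{eq:lambdaRHn} force $\mu_i \to \lambda_i$ and $\Var T(H_n^{(i)}) \to \lambda_i$ for each $i$, with $\lambda_1 = \lambda$ and $\lambda_i = 0$ for all $i \ge 2$; feeding these into Corollary \ref{cor:WHn} yields $\mathsf W(H_n) \dto \sum_{i=1}^K i Z_i$ with $Z_i \sim \Pois(\lambda_i)$ independent, and since $Z_i \equiv 0$ a.s.\ for $i \ge 2$ this is exactly $\Pois(\lambda)$.

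The technical heart is extracting the per-layer convergence from the two scalar conditions, and for this I would exploit the nonnegativity of the covariances of the monochromatic indicators. A direct computation shows that for distinct hyperedges $\bm e, \bm f$ sharing $s$ vertices one has $\Cov(\bm 1\{X_{=\bm e}\}, \bm 1\{X_{=\bm f}\}) = c_n^{-(2r-s-1)} - c_n^{-2(r-1)}$ when $s \ge 1$ (and $0$ when $s = 0$), which is $0$ for $s \le 1$ and strictly positive for $s \ge 2$. Consequently, setting $A_n := \sum_{\bm e \in E(H_n)} w_{\bm e}^2\, c_n^{-(r-1)} = \sum_{i=1}^K i^2 \mu_i$, the variance splits as $\Var \mathsf W(H_n) = A_n(1 - c_n^{-(r-1)}) + R_n$, where $R_n \ge 0$ is the sum of all the (nonnegative) cross-covariances coming from pairs sharing at least two vertices.

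Now I would run a sandwiching argument. Since $1 \le w_{\bm e} \le K$ we have $\bE \mathsf W(H_n) \le A_n \le K\, \bE \mathsf W(H_n)$, so $A_n$ is bounded and (recall $c_n \to \infty$) $A_n c_n^{-(r-1)} \to 0$. Combining $\Var \mathsf W(H_n) = A_n + R_n - o(1) \to \lambda$ with $R_n \ge 0$ and $\liminf A_n \ge \lim \bE \mathsf W(H_n) = \lambda$ pins down $A_n \to \lambda$ and $R_n \to 0$. The identity $A_n - \bE \mathsf W(H_n) = \sum_{i \ge 2} i(i-1)\mu_i \to 0$ then forces $\mu_i \to 0$ for every $i \ge 2$ (each summand is nonnegative with $i(i-1) \ge 2$), whence $\mu_1 \to \lambda$. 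Finally, writing $R_i^{(n)}$ for the within-layer-$i$ excess covariance, one has $R_i^{(n)} \le R_n \to 0$, so $\Var T(H_n^{(i)}) = \mu_i(1 - c_n^{-(r-1)}) + R_i^{(n)} \to \lambda_i$, completing the verification of the hypotheses of Corollary \ref{cor:WHn}.

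The main obstacle, and the place where the structure of the problem is genuinely used, is the passage from the aggregate second-moment condition to the per-layer conditions. This hinges entirely on the nonnegativity of all pairwise covariances, which lets the variance be expressed as a sum of nonnegative pieces and makes the sandwich $\bE\mathsf W(H_n) \le A_n \le \Var\mathsf W(H_n) + o(1)$ collapse; without sign control one could not rule out cancellation of second-moment contributions across the layers. The remaining steps are bookkeeping, and the reduction to $\Pois(\lambda)$ is immediate once the weight-$i$ means for $i \ge 2$ are shown to vanish.
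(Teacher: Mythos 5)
Your proposal is correct and follows essentially the same route as the paper: the paper likewise splits $\Var\mathsf{W}(H_n)$ into the diagonal term $U_{1,n}=\sum_{\bm e}w_{\bm e}^2 c_n^{-(r-1)}(1-c_n^{-(r-1)})$ (your $A_n$) and a nonnegative cross-covariance term $U_{2,n}$ (your $R_n$), uses $w_{\bm e}\le w_{\bm e}^2$ to sandwich $U_{1,n}\to\lambda$ and force $U_{2,n}\to 0$, and then deduces $\sum_{i\ge 2}i(i-1)|E(H_n^{(i)})|/c_n^{r-1}\to 0$ exactly as you do. The only cosmetic difference is that you feed the resulting per-layer mean/variance limits into Corollary \ref{cor:WHn}, whereas the paper verifies the equivalent $|\cK(t,H_n^{(i)})|=o(c_n^{2r-t-1})$ conditions and invokes Proposition \ref{ppn:dmultiplegraphs} directly.
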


The framework of weighted hypergraphs can be applied to derive the distribution of the number of monochromatic subgraphs in a uniformly random vertex coloring of a sequence of graphs. To this end, suppose $G_n = (V(G_n), E(G_n))$ is a sequence of graphs such that the vertices of $G_n$ are colored uniformly at random with $c_n$ colors as in \eqref{eq:cn}. For a fixed graph $F = (V(F), E(F))$, let $Q(F, G_n)$ denote the number of monochromatic copies of $F$ in $G_n$, where we say $F$ is {\it monochromatic} if all the vertices of $F$ have the same color.\footnote{Recall that in Section \ref{sec:jointedge}, we considered the problem of counting the number of monochromatic copies of a graph $F$ in a randomly {\it edge} colored graph $G_n$. In that context, $F$ is said to be monochromatic if all the {\it edges} of $F$ have the same color. Here, we are considering the analogous problem for randomly vertex-colored graphs. } 
Formally,  
$$Q(F, G_n):=\frac{1}{|Aut(F)|}\sum_{\bm s \in V(G_n)_{|V(F)|}} \prod_{(a,b) \in E(F)}a_{s_a s_b}(G_n) \bm 1\{X_{=\bm s}\},$$
where:  
\begin{itemize}
\item[--] $V(G_n)_{|V(F)|}$ is the set of all $|V(F)|$-tuples $\bm s = (s_1, s_2, \ldots, s_{|V (F)|} ) \in V(G_n)^{|V(F)|}$ with distinct
entries,\footnote{For any finite set $S$ and positive integer $k \geq 1$, $S^k = S \times S \cdots \times S$  denotes the $k$-fold Cartesian product of $S$.}

\item[--] for any ${\bm s}=(s_1,\cdots, s_{|V(F)|}) \in  V(G_n)_{|V(F)|}$, 
\begin{align*}
\bm 1\{X_{=\bm s}\}:= \bm 1\{X_{s_1}=\cdots=X_{s_{|V(F)|}}\},  
\end{align*}

\item[--] $Aut(F)$ is the {\it automorphism group} of $F$, that is, the set of permutations $\sigma$ of the vertex set $V(F)$ such that $(x, y) \in E(F)$ if and only if $(\sigma(x), \sigma(y)) \in E(F)$. 
\end{itemize}
Note that when $F=K_2$, $Q(K_2, G_n)$ is the number of monochromatic edges in a random vertex coloring $G_n$, which arises in the study of the birthday problem (recall Example \ref{example:monochromaticedgesgraph}). To express $Q(F, G_n)$ in terms of \eqref{eq:WHn}, define the weighted $|V(F)|$-uniform hypergraph $\bar{H}_{G_n}(F) = (V(\bar{H}_{G_n}(F)), E(\bar{H}_{G_n}(F)) )$ as follows: 

\begin{itemize}
    \item[--] $V(\bar{H}_{G_n}(F)) = V(G_n)$.
    \item[--] For each $|V(F)|$-tuple $\bm s \in { V(G_n) \choose |V(F)| }$, denote by $G_n[{\bm s}]$ the induced subgraph of $G_n$ on the vertices in $\bm s$ and $N(F, G_n[{\bm s}])$ be the number of copies of $F$ in the induced graph $G_n[\bm s]$. Then 
    $$E(\bar{H}_{G_n}(F)) = \left\{ \bm s \in { V(G_n) \choose |V(F)| }:  N(F, G_n[{\bm s}]) \geq 1 \right\} $$  
    and $w(\bm e) = N(F, G_n[{\bm s}])$, for $\bm e \in E(\bar{H}_{G_n}(F))$. 
\end{itemize}
With this construction, it is easy to check that $Q(F, G_n) = \mathsf{W}(\bar{H}_{G_n}(F))$. Hence, invoking Corollary \ref{cor:WHn} we recover the second moment phenomenon for $Q(F, G_n)$ from \cite{bhattacharya2020second}.\footnote{Note that when $F=K_r$ is the $r$-clique, then $\bar{H}_{G_n}(K_r)$ is the same as the hypergraph $\cC_r(G_n)$ defined in Example \ref{example:monochromaticedgesgraph}. In this case, the hypergraph is unweighted, since $N(K_r, G_n[{\bm s}])$ is either 0 or 1, for any $\bm s \in { V(G_n) \choose r}$. }

\begin{cor}[{\cite[Theorem 1.1]{bhattacharya2020second}}]
    \label{cor: subgraph counting} 
    Fix a graph $F= (V(F), E(F))$. Let $G_n$ be a sequence of graphs with vertices colored uniformly at random with $c_n$ colors as in \eqref{eq:cn} such that
    $$\lim_{n \rightarrow \infty}\bE Q(F, G_n)=\lambda \quad \text{and}\quad \lim_{n \rightarrow \infty}\Var Q(F, G_n)=\lambda ,$$ 
    for some constant $\lambda \geq 0$. Then $Q(F, G_n) \dto \Pois(\lambda)$. \hfill $\Box$  
\end{cor}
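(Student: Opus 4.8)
The plan is to observe that this corollary is a direct consequence of the weighted-hypergraph second moment phenomenon, Corollary \ref{cor:WHnlambda}, via the identity $Q(F, G_n) = \mathsf{W}(\bar{H}_{G_n}(F))$ established in the construction preceding the statement. Thus essentially all the analytic content has already been carried out upstream, and the remaining task is purely to check that the hypotheses transfer correctly.

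First I would verify that $\bar{H}_{G_n}(F)$ is a bona fide weighted $|V(F)|$-uniform hypergraph of the kind to which Corollary \ref{cor:WHnlambda} applies; that is, that its weights are positive integers bounded by a constant $K$ independent of $n$. This is immediate: each hyperedge $\bm e$ corresponds to a set $\bm s$ of exactly $|V(F)|$ vertices, so the weight $w(\bm e) = N(F, G_n[\bm s])$ is at most the number of copies of $F$ in the complete graph on $|V(F)|$ vertices, namely $K := |V(F)|!/|\mathrm{Aut}(F)|$. Since hyperedges are included precisely when $N(F, G_n[\bm s]) \geq 1$, every weight lies in $\{1, \ldots, K\}$, as required by the weighted framework of Section \ref{sec:vertexsubgraph}.

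Next I would transport the moment hypotheses. By the identity $Q(F, G_n) = \mathsf{W}(\bar{H}_{G_n}(F))$, the assumptions $\bE Q(F, G_n) \to \lambda$ and $\Var Q(F, G_n) \to \lambda$ are literally the hypotheses \eqref{eq:lambdaRHn} for the sequence $\bar{H}_{G_n}(F)$. Invoking Corollary \ref{cor:WHnlambda} then gives $\mathsf{W}(\bar{H}_{G_n}(F)) \dto \Pois(\lambda)$, and translating back through the same identity yields $Q(F, G_n) \dto \Pois(\lambda)$.

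Because all the genuine probabilistic work lives in Corollary \ref{cor:WHnlambda} (and hence in Corollary \ref{cor:WHn} and Proposition \ref{ppn:dmultiplegraphs}, which in turn rest on the truncation and moment-comparison arguments behind Theorem \ref{thm:THn}), I do not anticipate any serious obstacle here. The only point requiring a moment's care is the boundedness of the weights $N(F, G_n[\bm s])$ uniformly in $n$, since this is exactly what permits the reduction to the finitely many weight-layers $\bar{H}_{G_n}^{(1)}(F), \ldots, \bar{H}_{G_n}^{(K)}(F)$ of the hypermultiplex used in Corollary \ref{cor:WHn}. This is the sole new verification, and it is elementary since each induced subgraph $G_n[\bm s]$ has only $|V(F)|$ vertices, so no structural assumption on $F$ is needed.
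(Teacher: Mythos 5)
Your proposal is correct and follows the paper's own route: the reduction $Q(F,G_n)=\mathsf{W}(\bar{H}_{G_n}(F))$ with weights bounded by $K=|V(F)|!/|\mathrm{Aut}(F)|$, followed by an application of the weighted second moment phenomenon. You correctly pinpoint Corollary \ref{cor:WHnlambda} (rather than Corollary \ref{cor:WHn}) as the result whose hypotheses \eqref{eq:lambdaRHn} match the assumed convergence of $\bE Q(F,G_n)$ and $\Var Q(F,G_n)$, which is exactly the chain the paper intends.
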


\begin{remark} 
The limiting distribution of $Q(F, G_n)$ in the regime where $c_n = c$ is fixed has also been studied in a series of recent papers. In this case, the Gaussian limit of $Q(F, G_n)$ (appropriately centered and scaled) is often governed by a fourth moment phenomenon \cite{bhattacharya2017universal,bhattacharya2022normal,asymptoticdistributionrandomquadratic,subgraph2023fourth, subgraph2024characterizing}. Non-Gaussian limits can also arise, for instance, when $G_n$ is a converging sequence of dense graphs \cite{bhattacharya2019monochromatic}. 
A characterization of all distributional limits of $Q(K_2, G_n)$ when $c=2$ is provided in \cite{asymptoticdistributionrandomquadratic}. 
\end{remark}

\section{ Proof of Theorem \ref{thm:THn} }
\label{sec: marginal} 

We begin by introducing a definition that will be recurrent throughout our discussion: 
\begin{defn}
    \label{defn:K}
  Suppose $H_n$ is an $r$-uniform hypergraph. For an integer $t \le r - 1,$ we define $\cK(t, H_n)$ as the set of ordered pairs of hyperedges $(\bm e_1, \bm e_2)$ that share {\it exactly} $t$ vertices, that is, $\bm e_1, \bm e_2 \in E(H_n)$ such that $|\bm e_1 \cap \bm e_2| = t$. 
\end{defn}

Theorem \ref{thm:THn} is a consequence of the following result which gives sufficient conditions in terms of $\cK(t, H_n)$ under which $T(H_n)$ converges to a Poisson distribution. 

\begin{thm} 
    \label{thm:THnKpoisson}
  Fix an integer $r \geq 2$ and consider a sequence $H_n$ of $r$-uniform hypergraphs with vertices colored uniformly at random with $c_n$ colors as in \eqref{eq:cn}, such that the following hold:
  \begin{itemize}
      \item $\lim_{n\to \infty}\frac{|E(H_n)|}{c_n^{r-1}} = \lambda$
      \item For $t \in [2, r - 1]$, $|\cK(t, H_n)| = o(c_n^{2r-t-1})$.
  \end{itemize}
  Then $T(H_n) \dto \Pois(\lambda)$. 
\end{thm}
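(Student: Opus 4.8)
I would prove Theorem~\ref{thm:THnKpoisson} by the method of moments, comparing $T(H_n)$ against an independent surrogate. Let $\{Y_{\bm e}\}_{\bm e \in E(H_n)}$ be independent $\Ber(c_n^{-(r-1)})$ random variables and set $S_n = \sum_{\bm e} Y_{\bm e}$. Since $c_n^{-(r-1)} \to 0$ and $\bE S_n = |E(H_n)|/c_n^{r-1} \to \lambda$, the law of rare events gives $S_n \dto \Pois(\lambda)$, and the Poisson law is determined by its moments. Hence it suffices to show, for every fixed integer $k \ge 1$,
\[
\bE[(T(H_n))_k] - \bE[(S_n)_k] \longrightarrow 0 , \qquad (x)_k := x(x-1)\cdots(x-k+1) ,
\]
from which $\bE[(T(H_n))_k] \to \lambda^k$ and $T(H_n) \dto \Pois(\lambda)$ follow.

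\textbf{Expanding the factorial moments.} Both factorial moments expand over the same index set, the ordered $k$-tuples $(\bm e_1, \ldots, \bm e_k)$ of \emph{distinct} hyperedges. By independence the surrogate summand is always $c_n^{-k(r-1)}$, while for $T(H_n)$ the summand is $\bP(\bm e_1, \ldots, \bm e_k \text{ all monochromatic}) = c_n^{-(v - p)}$, where $v$ is the number of vertices in $\bm e_1 \cup \cdots \cup \bm e_k$ and $p$ is the number of connected components of the ``contact graph'' on $\{1, \ldots, k\}$ (with $i \sim j$ iff $\bm e_i \cap \bm e_j \neq \emptyset$): vertices in one component must all receive the same color, and distinct components are independent. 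Writing the deficiency $D := k(r-1) - (v-p) \ge 0$, the two summands \emph{coincide exactly when $D = 0$}. The key structural observation is that attaching a hyperedge to the rest through a single vertex adds no deficiency, so all ``forest-like'' single-vertex overlaps are automatically matched by the surrogate; this is precisely why no hypothesis on $\cK(1, H_n)$ is needed. Together with $\bE[(S_n)_k] = (|E(H_n)|)_k\, c_n^{-k(r-1)} \to \lambda^k$, we are reduced to controlling the positive-deficiency tuples:
\[
\bigl| \bE[(T(H_n))_k] - \bE[(S_n)_k] \bigr| \le \sum_{(\bm e_1, \ldots, \bm e_k) : D \ge 1} c_n^{-(v-p)} .
\]

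\textbf{Reduction to defect configurations.} Every tuple with $D \ge 1$ contains a connected sub-configuration $F$ (on $m \le k$ hyperedges, with $v_F$ vertices) of positive deficiency $d_F = m(r-1) + 1 - v_F \ge 1$; bounding the remaining $k - m$ hyperedges by the surrogate rate $c_n^{-(k-m)(r-1)}$ and their count by $|E(H_n)|^{k-m} = O(c_n^{(r-1)(k-m)})$, the display above is $O(1)$ times $\sum_{F} N(F, H_n)\, c_n^{-(v_F - 1)}$, where $N(F, H_n)$ is the number of labeled copies of $F$ and the sum ranges over the finitely many connected shapes with $d_F \ge 1$ and $m \le k$. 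Thus it suffices to prove $N(F, H_n) = o(c_n^{\,v_F - 1})$ for each such defect shape $F$. Positive deficiency forces $F$ to contain either a pair of hyperedges meeting in $t \ge 2$ vertices, or a ``cyclic'' attachment in which some hyperedge meets the rest in $\ge 2$ vertices lying in distinct earlier hyperedges.

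\textbf{The main obstacle.} The genuine difficulty is this defect-counting, and in particular the cyclic single-vertex configurations (the $r$-uniform analogue of a triangle). A naive extension argument bounding each attachment step by a maximum degree or maximum codegree loses a factor of $c_n^{1/2}$ per excess contact and is \emph{insufficient} — already for triangles in graphs one needs $N \le (\text{\# edges})^{3/2}$ rather than $(\text{\# edges})\cdot\Delta$. I expect the crux to be a Cauchy--Schwarz / convexity estimate that bounds $N(F, H_n)$ by products of codegree moments $M_s := \sum_{|S| = s} \mathrm{codeg}(S)^2$, where the double-counting identity $M_s = \binom{r}{s}|E(H_n)| + \sum_{t \ge s} \binom{t}{s}\, |\cK(t, H_n)|$ converts the hypotheses $|\cK(t, H_n)| = o(c_n^{2r - t - 1})$ (for $t \ge 2$) and $|E(H_n)| = O(c_n^{r-1})$ into the sharp bound $M_s = o(c_n^{\,2r - s - 1})$. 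Feeding these moment bounds through the convexity step should yield $N(F, H_n) = o(c_n^{\,v_F - 1})$ for every defect shape, completing the estimate; organizing this counting uniformly over all shapes $F$ (or, equivalently, via the truncation of high-codegree hyperedges suggested for Theorem~\ref{thm:THn}) is where the real work lies.
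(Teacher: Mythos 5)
Your setup — the factorial-moment comparison with an independent Bernoulli surrogate, the identification of the deficiency $D=k(r-1)-(v-p)$, the observation that single-vertex (loose-tree) overlaps carry zero deficiency so that no hypothesis on $\cK(1,H_n)$ is needed, and the reduction to showing $N(F,H_n)=o(c_n^{v_F-1})$ for connected positive-deficiency shapes $F$ — is correct and matches the skeleton of the paper's argument (the structural claim that every positive-deficiency connected shape admits an attachment step with intersection size in $[2,r-1]$ is exactly the paper's ordering lemma, Lemma \ref{lm:connected}). But the proposal stops at the one step that carries all the difficulty: you never prove the counting estimate, and the route you sketch for it does not obviously close. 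The problem is visible already for $r=3$ and the ``loose triangle'' $F$ (three hyperedges pairwise meeting in one vertex, $v_F=6$, deficiency $1$): any bound of the form you propose, via Cauchy--Schwarz against codegree moments, inevitably drags in vertex-degree second moments $\sum_u d(u)^2$, which are governed by $|\cK(1,H_n)|$ — a quantity the hypotheses place \emph{no} constraint on (it can be as large as $|E(H_n)|^2$). A straightforward Cauchy--Schwarz along the lines you indicate gives only $o(c_n^{5.5})$ for the loose-triangle count, short of the required $o(c_n^{5})$. So ``feeding the moment bounds through the convexity step'' is not a routine verification; as written, the key estimate is asserted, not proved.

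The paper resolves exactly this issue by \emph{truncating first}: it discards every hyperedge that contains some $t$-subset ($2\le t\le r-1$) of codegree exceeding $\varepsilon c_n^{r-t}$, shows the discarded part is negligible in $L_1$ using only the $\cK(t,H_n)$ hypotheses and a Markov/second-moment count (Lemma \ref{lm:remainderH}), and then runs the moment comparison only on the surviving hyperedges. On the truncated set the codegree bounds hold \emph{pointwise}, so the ordering lemma immediately yields $|\sS_{\varepsilon,k,b}(F)|\lesssim \varepsilon\, c_n^{|V(F)|-\nu(F)}$ for every defect shape (Lemma \ref{lm:countS}) — each attachment step of intersection size $t\in[2,r-1]$ costs at most $\varepsilon c_n^{r-t}$, and the $t=1$ steps are charged to $|E(H_n)|=O(c_n^{r-1})$. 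No global codegree-moment inequality is ever needed. I would also flag a smaller slip in your reduction: you bound the components of a tuple other than the chosen defect component ``by the surrogate rate $c_n^{-(k-m)(r-1)}$,'' but the actual probability contributed by those components is $c_n^{-(v-v_F-(p-1))}\ge c_n^{-(k-m)(r-1)}$, i.e.\ the inequality goes the wrong way when they too have positive deficiency; the correct bookkeeping is componentwise, showing each factor $N(F_j,H_n)\,c_n^{-(v_j-1)}$ is $O(1)$ and at least one is $o(1)$. That is fixable, but the missing counting lemma is a genuine gap.
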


The proof of Theorem \ref{thm:THnKpoisson} is given in Section \ref{sec:poissonmarginalpf}. We now derive Theorem \ref{thm:THn} from Theorem \ref{thm:THnKpoisson}. To do this, suppose the conditions of Theorem \ref{thm:THn} hold. Then 
$$\bE T(H_n) = \frac{|E(H_n)|}{c_n^{r-1}} \to \lambda.$$ 
Also, $\Var T(H_n) = R_{1,n} + R_{2,n}, $ 
where 
\begin{align}\label{eq:varianceTHn}
R_{1,n} = \frac{1}{c_n^{r-1}}\left(1 - \frac{1}{c_n^{r-1}}\right)|E(H_n)| \text{ and } R_{2,n} = \sum_{t=2}^{r-1}\frac{1}{c_n^{2r-t-1}}\left(1 - \frac{1}{c_n^{t-1}}\right)|\cK(t, H_n)| . 
\end{align} 
Since $R_{1,n} \to \lambda$ and $\Var T(H_n) \to \lambda$ by the assumptions of Theorem \ref{thm:THn}, we have $R_{2,n} \to 0$. This implies each of the summands in $R_{2,n}$ should converge to zero, or $|\cK(t, H_n)| = o(c_n^{2r-t-1})$, for $t \in [2, r - 1]$. Hence, the conditions of Theorem \ref{thm:THnKpoisson} are satisfied and $T(H_n) \dto \Pois(\lambda)$. 

%
%

\subsection{Proof of Theorem \ref{thm:THnKpoisson} }
\label{sec:poissonmarginalpf}

We first provide the proof assuming $r \ge 3$, as this allows us to avoid some degeneracies that arise when $r=2$. After presenting the proof for $r \ge 3$, we will return to the $r=2$ case in Section \ref{sec:THnKgraphpf}.


\subsubsection{Proof of Theorem \ref{thm:THnKpoisson} when $r \geq 3$ }
\label{sec:THnKhypergraphpf}

For $t \in [2, r-1]$ and $\bm s = \{s_1, \ldots , s_t \} \in {V(H_n) \choose t}$, let $M_t(\bm s, H_n)$ denote the set of hyperedges in $H_n$ containing the indices in $\bm s$, that is, 
    \begin{align}\label{eq:MsHn}
    M_t(\bm s, H_n) = \{ \bm e \in E(H_n) : \bm s \subset \bm e\} , 
    \end{align} 
Now, fixing $\varepsilon > 0$, define 
$$\cA_{\varepsilon, t}(H_n) := \{\bm e \in E(H_n): |M_t(\bm s, H_n)| \le \varepsilon c_n^{r-t} \text{ for all } \bm s \subset \bm e \text{ such that } |\bm s| = t \} $$ 
and 
    \begin{align}\label{eq:AepsilonHn}
    \cA_{\varepsilon}(H_n) = \bigcap_{t=2}^{r-1} \cA_{\varepsilon, t}(H_n). 
    \end{align}
Intuitively, $\cA_{\varepsilon, t}(H_n)$ is the set of hyperedges in $H_n$ whose intersection with other hyperedges (in a subset of size $t \in [2, r-1]$) is ``small.'' Using the set $\cA_{\varepsilon}(H_n)$ we decompose $T(H_n)$ into two terms: the {\it main term}    
    \begin{align}\label{eq:THnepsilon}
    T_{\varepsilon}^+(H_n) = \sum_{\bm e \in  \cA_{\varepsilon}(H_n)}\bm 1\{X_{=\bm e}\}
    \end{align}
and the {\it remainder term} 
    \begin{align}\label{eq:Tremainder}
    T_{\varepsilon}^-(H_n) = T(H_n) - T_{\varepsilon}^+(H_n) = \sum_{\bm e \not \in \cA_{\varepsilon}(H_n)}\bm 1\{X_{=\bm e}\} . 
    \end{align}

The first step in the proof of Theorem \ref{thm:THnKpoisson} is to show that $T_{\varepsilon}^-(H_n)$ is negligible in $L_1$, for any fixed $\varepsilon > 0$. For two non-negative sequences $a_n$ and $b_n$, we use the notation $a_n \lesssim_{\square} b_n$ to denote that $a_n \le C(\square) \cdot b_n$ where $C(\square) > 0$ is a constant depending on the subscripted parameters (denoted by $\square$).  

\begin{lem}
    \label{lm:remainderH}
    For each fixed $\varepsilon > 0$, $T_{\varepsilon}^-(H_n) \overset{L_1}{\to} 0$, as $n \to \infty$.
\end{lem}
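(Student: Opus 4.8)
The plan is to exploit nonnegativity: since $T_{\varepsilon}^-(H_n) \geq 0$, establishing $L_1$ convergence amounts to showing $\bE T_{\varepsilon}^-(H_n) \to 0$. Because each hyperedge $\bm e$ is $r$-uniform, $\bP(X_{=\bm e}) = c_n^{-(r-1)}$, so $\bE T_{\varepsilon}^-(H_n) = |E(H_n) \setminus \cA_{\varepsilon}(H_n)| / c_n^{r-1}$. By \eqref{eq:AepsilonHn} and a union bound, $|E(H_n) \setminus \cA_{\varepsilon}(H_n)| \leq \sum_{t=2}^{r-1} |\cA_{\varepsilon, t}(H_n)^c|$, and since the range of $t$ is finite it suffices to prove, for each fixed $t \in [2, r-1]$, that $|\cA_{\varepsilon, t}(H_n)^c| = o(c_n^{r-1})$. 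Here $\bm e \in \cA_{\varepsilon, t}(H_n)^c$ precisely when $\bm e$ contains some \emph{heavy} $t$-subset $\bm s$, meaning $m_{\bm s} := |M_t(\bm s, H_n)| > \varepsilon c_n^{r-t}$; let $\cB_t$ denote the collection of all such heavy $t$-subsets.

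First I would control $\cB_t$ via a double-counting identity. Counting ordered pairs of distinct hyperedges that both contain a common $t$-subset in two ways gives $\sum_{\bm s \in \binom{V(H_n)}{t}} m_{\bm s}(m_{\bm s}-1) = \sum_{\bm e_1 \neq \bm e_2} \binom{|\bm e_1 \cap \bm e_2|}{t}$. Since distinct $r$-uniform hyperedges share at most $r-1$ vertices, each summand on the right is at most $\binom{r}{t}$ and is nonzero only when $|\bm e_1 \cap \bm e_2| \geq t$; hence the right-hand side is bounded by $\binom{r}{t} \sum_{s=t}^{r-1} |\cK(s, H_n)|$. By the hypothesis of Theorem \ref{thm:THnKpoisson} and monotonicity of $2r-s-1$ in $s$, this is $\binom{r}{t}\, o(c_n^{2r-t-1})$. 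This is the crucial quantitative input, and producing the right double-counting bookkeeping is where I expect the only real care to be needed.

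Finally I would convert this second-moment control into the desired first-moment estimate using the heaviness threshold. For $\bm s \in \cB_t$ and $n$ large we have $m_{\bm s}-1 \geq \tfrac{1}{2} m_{\bm s} > \tfrac{\varepsilon}{2} c_n^{r-t}$ (using $c_n^{r-t} \to \infty$, which holds because $t \leq r-1$ and $c_n \to \infty$), so $m_{\bm s} \leq \tfrac{2}{\varepsilon c_n^{r-t}} m_{\bm s}(m_{\bm s}-1)$. Since every $\bm e \in \cA_{\varepsilon,t}(H_n)^c$ contains at least one element of $\cB_t$, the incidence bound gives
$$|\cA_{\varepsilon, t}(H_n)^c| \leq \sum_{\bm s \in \cB_t} m_{\bm s} \leq \frac{2}{\varepsilon c_n^{r-t}} \sum_{\bm s} m_{\bm s}(m_{\bm s}-1) \lesssim_{r,\varepsilon} \frac{o(c_n^{2r-t-1})}{c_n^{r-t}} = o(c_n^{r-1}).$$
Summing over the finitely many values of $t$ yields $|E(H_n) \setminus \cA_{\varepsilon}(H_n)| = o(c_n^{r-1})$, whence $\bE T_{\varepsilon}^-(H_n) = o(1)$ and the lemma follows. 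The heart of the argument is this threshold trick, which upgrades the $o(c_n^{2r-t-1})$ control on intersecting pairs into the sharper $o(c_n^{r-1})$ bound on bad edges; the factor $c_n^{r-t}$ gained from heaviness is exactly what closes the gap.
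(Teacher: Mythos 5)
Your proposal is correct and follows essentially the same route as the paper: reduce to showing $\bE T_{\varepsilon}^-(H_n)\to 0$, union-bound over $t$ and the heavy $t$-subsets, use the heaviness threshold to upgrade $m_{\bm s}$ to $m_{\bm s}(m_{\bm s}-1)/(\varepsilon c_n^{r-t})$, and double-count to relate $\sum_{\bm s} m_{\bm s}(m_{\bm s}-1)$ to the quantities $|\cK(s,H_n)|$, which are $o(c_n^{2r-t-1})$ by hypothesis. If anything your bookkeeping is slightly more careful than the paper's (tracking the sum over $s\ge t$ explicitly and justifying $m_{\bm s}-1\ge m_{\bm s}/2$), but the argument is the same.
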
 

\begin{proof}
    Notice from \eqref{eq:Tremainder} that 
    $$\bE T_{\varepsilon}^-(H_n) = \sum_{\bm e \not \in \cA_{\varepsilon}(H_n)} \bP(X_{=\bm e} ) = \frac{1}{c_n^{r-1}}|E(H_n)\setminus\cA_{\varepsilon}(H_n)|.$$
    By definition and the union bound,
    \begin{align*}
        |E(H_n)\setminus\cA_{\varepsilon}(H_n)| 
        &\le\sum_{\bm e \in E(H_n)}\sum_{t=2}^{r-1} 
        \bm 1\{ \text{there is } \bm s \subset \bm e \text{ with } |\bm s| =t \text{ such that } M_t(\bm s, H_n) > \varepsilon c_n^{r-t}\}\\
        &=\sum_{t=2}^{r-1}\sum_{\bm s \in {V(H_n) \choose t} } \bm 1\{ \bm e \in E(H_n), \bm s \subset \bm e, M_t(\bm s, H_n) > \varepsilon c_n^{r-t}\}\\
        &\le\sum_{t=2}^{r-1}\sum_{\bm s \in {V(H_n) \choose t} }\frac{M_t(\bm s, H_n)}{\varepsilon c_n^{r-t}} |\{\bm e \in E(H_n) : \bm s \subset \bm e\}| \bm 1\{ M_t(\bm s, H_n) > \varepsilon c_n^{r-t}\} \\
        &\le \sum_{t=2}^{r-1}\sum_{\bm s \in {V(H_n) \choose t} }\frac{M_t(\bm s, H_n)^2}{\varepsilon c_n^{r-t}} \bm 1\{ M_t(\bm s, H_n) > \varepsilon c_n^{r-t}\} . 
    \end{align*}
    For fixed $t, \sum_{\bm s \in V(H_n)^t}\binom{M_t(\bm s, H_n)}{2}$ counts the number of pairs of edges sharing at least $t$ vertices, up to a constant depending on $t$ and $r$. Thus, with the hypothesis that for $t \in [2, r - 1], |\cK(t, H_n)| = o(c_n^{2r-t-1})$, we have
    \begin{align}\label{eq:EHnAHn}
        |E(H_n)\setminus\cA_{\varepsilon}(H_n)| &\le\sum_{t=2}^{r-1}\sum_{\bm s \in V(H_n)^t}\frac{M_t(\bm s, H_n)^2}{\varepsilon c_n^{r-t}} \bm 1\{ M_t(\bm s, H_n) > \varepsilon c_n^{r-t}\} \nonumber \\
        & \lesssim \sum_{t=2}^{r-1}\frac{1}{\varepsilon c_n^{r-t}}\sum_{\bm s \in V(H_n)^t}\binom{M_t(\bm s, H_n)}{2} \nonumber \\
        &\lesssim_{r,t} \sum_{t=2}^{r-1}\frac{\cK(t, H_n)}{\varepsilon c_n^{r-t}} . 
    \end{align}
    Hence, the second condition of Theorem \ref{thm:THnKpoisson} implies
    \begin{align}\label{eq:Hnremainder}
    \bE T_{\varepsilon}^-(H_n) = \frac{1}{c_n^{r-1}}|E(H_n)\setminus\cA_{\varepsilon}(H_n)|\lesssim_{r,t} \sum_{t=2}^{r-1}\frac{\cK(t, H_n)}{\varepsilon c_n^{2r-t-1}} = o(1). 
    \end{align}
    This completes the proof of Lemma \ref{lm:remainderH}. 
\end{proof}

Next, we analyze the main term $T_{\varepsilon}^+(H_n)$ (recall \eqref{eq:THnepsilon}). To do so, we consider a surrogate for $T_{\varepsilon}^+(H_n)$ obtained by replacing the variables $\{ \bm 1\{X_{=\bm e}\} : \bm e \in  \cA_{\varepsilon}(H_n)\}$ by independent Bernoulli random variables. Formally, we let 
\begin{align}\label{eq:Jn}
J_{\varepsilon}^+(H_n) := \sum_{\bm e \in \cA_{\varepsilon}(H_n)}J_{\bm e} , 
\end{align}
where $\{J_{\bm e} : \bm e \in  \cA_{\varepsilon}(H_n)\}$ is a collection of independent $\Ber(\frac{1}{c_n^{r-1}})$ random variables. The following lemma shows that $T_{\varepsilon}^+(H_n)$ and $J_{\varepsilon}^+(H_n)$ are asymptotically close in moments. 

\begin{lem}
    \label{lm:momentH}
    For all integers $k \ge 1$, 
    $$ |\bE T_{\varepsilon}^+(H_n)^k - \bE J_{\varepsilon}^+(H_n)^k|  \rightarrow 0 , $$ 
    as $n \rightarrow \infty$ followed by $\varepsilon \rightarrow 0$.
\end{lem}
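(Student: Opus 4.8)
The plan is to expand both $k$-th moments combinatorially and match them term by term according to the intersection pattern of the participating hyperedges. Since both $\bm 1\{X_{=\bm e}\}$ and $J_{\bm e}$ are $\{0,1\}$-valued, I would expand the $k$-th power, collapse repeated factors via $\xi^p = \xi$, and organize the result by the set partition $\Pi$ of $[k]$ recording which of the $k$ indices carry equal hyperedges. This gives, with $m = |\Pi|$,
$$\bE T_\varepsilon^+(H_n)^k = \sum_{\Pi \vdash [k]} \sum_{\substack{\bm f_1, \ldots, \bm f_m \in \cA_\varepsilon(H_n) \\ \text{distinct}}} \bP\Big(\bigcap_{j=1}^m \{X_{=\bm f_j}\}\Big),$$
together with the identical expansion for $\bE J_\varepsilon^+(H_n)^k$ in which the probability is replaced by $\prod_{j=1}^m \bE J_{\bm f_j} = c_n^{-(r-1)m}$. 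Both inner summands factor over the connected components of the subhypergraph spanned by $\{\bm f_1,\ldots,\bm f_m\}$ (two hyperedges adjacent when they share a vertex), because vertices in distinct components are colored independently. This reduces the whole comparison to a per-component computation.

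For a connected component $C$ with $\ell(C)$ distinct hyperedges spanning $v(C)$ vertices, connectivity forces all $v(C)$ vertices to share a single color under the monochromaticity constraint, so the indicator factor equals $c_n^{1 - v(C)}$, while the Bernoulli factor equals $c_n^{-(r-1)\ell(C)}$. Fixing a connected edge-ordering of $C$ and letting $s_i \ge 1$ be the number of vertices the $i$-th edge shares with the union of the earlier ones, I would record the identity $(r-1)\ell(C) - (v(C)-1) = \sum_{i \ge 2}(s_i - 1) \ge 0$, which is independent of the ordering. Hence the two factors \emph{coincide exactly} for every component that is a hypertree (all $s_i = 1$, in particular every isolated edge), and differ only when some $s_i \ge 2$, i.e. when the component carries an ``excess overlap.'' Consequently every configuration all of whose components are hypertrees contributes $0$ to $\bE T_\varepsilon^+(H_n)^k - \bE J_\varepsilon^+(H_n)^k$, and since $c_n^{1-v(C)} \ge c_n^{-(r-1)\ell(C)}$, the full difference is bounded in absolute value by the total indicator mass carried by configurations containing at least one non-hypertree component.

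To bound that mass I would decompose $c_n^{1-v(C)} = c_n \prod_{i=1}^{\ell} c_n^{-r} \prod_{i \ge 2} c_n^{s_i}$ and enumerate each component along a canonical connected ordering. The first edge is chosen in $\le |E(H_n)| \lesssim c_n^{r-1}$ ways by the first hypothesis of Theorem \ref{thm:THnKpoisson}, which against the prefactor $c_n$ and weight $c_n^{-r}$ contributes $O(1)$. A later edge attaching along $s_i = 1$ vertex admits at most $r\ell(C)\,|E(H_n)| \lesssim c_n^{r-1}$ extensions, and against the weight $c_n^{1-r}$ this again contributes only $O(1)$ — so the degree-$1$ overlaps, which $\cA_\varepsilon(H_n)$ does not control, are harmless. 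An edge attaching along $s_i \in [2, r-1]$ vertices $\bm s$ admits at most $M_{s_i}(\bm s, H_n) \le \varepsilon c_n^{r-s_i}$ extensions in $\cA_\varepsilon(H_n)$ by the very definition of $\cA_\varepsilon(H_n)$, and against the weight $c_n^{s_i - r}$ this contributes $O(\varepsilon)$. Because a non-hypertree component forces $\sum_{i\ge 2}(s_i - 1) \ge 1$, hence at least one step with $s_i \ge 2$, every such component-sum is $\lesssim_{r,\ell}\varepsilon$; multiplying by the $O(1)$ contributions of the remaining components and summing over the finitely many partitions $\Pi$ yields $\limsup_{n\to\infty}|\bE T_\varepsilon^+(H_n)^k - \bE J_\varepsilon^+(H_n)^k| \lesssim_{k,r}\varepsilon$, after which letting $\varepsilon \to 0$ closes the argument.

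The step I expect to be the main obstacle is the conceptual one underlying the whole estimate: recognizing that the overlaps $\cA_\varepsilon(H_n)$ fails to control — intersections of size $t=1$ and the associated uncontrolled vertex degrees — produce \emph{no} discrepancy, since a hypertree component contributes the identical factor $c_n^{-(r-1)\ell}$ to both moments and thus cancels before any estimate is invoked. All genuine error stems from excess overlaps of size $t \in [2, r-1]$, which are precisely the ones damped by the truncation bound $M_t(\bm s, H_n) \le \varepsilon c_n^{r-t}$. Making this rigorous requires careful bookkeeping of the connected orderings, in particular the order-independence of the excess $\sum_{i\ge 2}(s_i-1)$ that guarantees at least one factor of $\varepsilon$ is always produced.
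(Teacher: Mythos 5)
Your overall strategy is the same as the paper's: expand both $k$-th moments over configurations of hyperedges from $\cA_{\varepsilon}(H_n)$, observe that the indicator probability $c_n^{-(|V(F)|-\nu(F))}$ and the Bernoulli product $c_n^{-(r-1)b}$ agree exactly on hyperforest configurations, and charge the remaining configurations to the truncation bound $M_t(\bm s, H_n)\le \varepsilon c_n^{r-t}$ by enumerating each component along a connected edge-ordering. Up to presentation (you sum over set partitions of $[k]$ and work component by component, while the paper sums over isomorphism classes $F$ and invokes Lemma \ref{lm:countS}), the two arguments are the same.

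There is, however, one concrete gap in the step where you extract the factor of $\varepsilon$. You argue that a non-hyperforest component has excess $\sum_{i\ge 2}(s_i-1)\ge 1$, hence some step with $s_i\ge 2$, and that every such step contributes $O(\varepsilon)$. But your case analysis only covers $s_i=1$ and $s_i\in[2,r-1]$; the case $s_i=r$ --- a new, distinct hyperedge whose vertex set is entirely contained in the union of the previously placed edges --- is possible once there are at least three distinct edges, and it is not controlled by $\cA_{\varepsilon}(H_n)$, since the truncation only bounds $M_t$ for $t\in[2,r-1]$. Such a step admits $O_{r,\ell}(1)$ choices against weight $c_n^{s_i-r}=1$, so it contributes $O(1)$, not $O(\varepsilon)$. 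Concretely, for $r=3$ take $\bm e_1=\{1,2,3\}$, $\bm e_2=\{3,4,5\}$, $\bm e_3=\{1,2,4\}$: in the ordering $(\bm e_1,\bm e_2,\bm e_3)$ one has $s_2=1$ and $s_3=3$, and your enumeration yields only an $O(1)$ bound for this non-hyperforest component. The order-independence of the excess guarantees some $s_i\ge 2$ in \emph{every} connected ordering, but not that some $s_i$ lies strictly in $[2,r-1]$; for that you must choose the ordering carefully (in the example, the ordering $(\bm e_1,\bm e_3,\bm e_2)$ gives $s_2=s_3=2$ and hence an $O(\varepsilon^2)$ bound). Proving that a suitable ordering always exists for a connected non-hyperforest component is exactly the content of the paper's Lemma \ref{lm:connected} (Appendix \ref{sec:connectedpf}), which requires a genuine case analysis; your proof needs this lemma, or an equivalent argument, to close.
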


The proof of Lemma \ref{lm:momentH} is given in Section \ref{sec:Hnmainpf}. To see how Lemma \ref{lm:momentH} applies in completing the proof of Theorem \ref{thm:THnKpoisson}, note that 
$$\bE J_{\varepsilon}^+(H_n) = \frac{|\cA_{\varepsilon}(H_n)|}{c_n^{r-1}} = \frac{|E(H_n)|}{c_n^{r-1}} - \frac{|E(H_n)\setminus\cA_{\varepsilon}(H_n)|}{c_n^{r-1}} \rightarrow \lambda ,$$
since $\frac{1}{c_n^{r-1}}|E(H_n)| \rightarrow \lambda$ by assumption and the second term is negligible by \eqref{eq:Hnremainder}. This implies that, since $J_{\varepsilon}^+(H_n)$ is a sum of independent $\Ber(\frac{1}{c_n^{r-1}})$ random variables (recall \eqref{eq:Jn}), 
$$J_{\varepsilon}^+(H_n) \rightarrow \Pois(\lambda)$$
in distribution and in moments. Hence, by Lemma \ref{lm:momentH}, the moments of $T_{\varepsilon}^+(H_n)$ converge to the moments of $\Pois(\lambda)$. Since a Poisson distribution is uniquely determined by its moments, we obtain $T_{\varepsilon}^+(H_n) \dto  \Pois(\lambda)$. Combining this with Lemma \ref{lm:remainderH} implies $T(H_n) \dto  \Pois(\lambda)$, completing the proof of Theorem \ref{thm:THnKpoisson}.

\subsubsection{Proof of Lemma \ref{lm:momentH}} 
\label{sec:Hnmainpf} 


Fix integers $1 \leq b \le k,$ define
$$\sS_{\varepsilon, k, b}= \{ (\bm e_1, \ldots, \bm e_k):  \bm e_i \in \cA_{\varepsilon}(H_n), \text{ for all }  1 \leq i \leq k,  \text{ and } |\{\bm e_1, \ldots, \bm e_k\}| = b \} .$$ 
In words, $\sS_{\varepsilon, k, b}$ is the collection of $k$-tuples of hyperedges (not necessarily distinct) in $\cA_{\varepsilon}(H_n)$ such that exactly $b$ (out of the $k$) hyperedges are distinct. Given $S = (\bm e_1, \ldots, \bm e_k) \in \sS_{\varepsilon, k, b}$, we denote the $r$-uniform hypergraph formed by the union of the edges $\{\bm e_1, \ldots, \bm e_k\}$ by $\cP(S)$. Note that $\cP(S)$ has $b$ distinct hyperedges. Given an $r$-uniform hypergraph $F$, define
    $$\sS_{\varepsilon, k, b}(F) := \{ S = (\bm e_1, \ldots,\bm e_k) \in \sS_{\varepsilon, k, b}: \cP(S) \text{ is isomorphic to }  F \} . $$ 
Note that if $F$ is connected and $\sS_{\varepsilon, k, b}(F)$ is non-empty, then $|V(F)| - 1\leq rb-b$. Hence, if $F$ has $\nu(F)$ connected components, and $\sS_{\varepsilon, k, b}(F)$ is non-empty, then 
$|V(F)| -\nu(F) \leq rb-b$. The following lemma gives a bound on $\sS_{\varepsilon, k, b}(F)$: 

\begin{lem}
    \label{lm:countS} 
   Fix $r \geq 3$. Let $F= (V(F), E(F))$ be an $r$-uniform hypergraph with $\nu(F)$ connected components such that $\sS_{\varepsilon, k, b}(F)$ is non-empty.   
    Then $|\sS_{\varepsilon, k, b}(F)| \lesssim_{r, b, \lambda} c_n^{|V(F)| - \nu(F)}$. Moreover, if $|V(F)| - \nu(F) < rb-b$, then
    $$|\sS_{\varepsilon, k, b}(F)| \lesssim_{r,b,\lambda} \varepsilon c_n^{|V(F)| - \nu(F)} . $$
\end{lem}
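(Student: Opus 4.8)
The plan is to count the $k$-tuples in $\sS_{\varepsilon,k,b}(F)$ by choosing the vertices of a copy of $F$ in $H_n$ and keeping track of how the constraint defining $\cA_\varepsilon(H_n)$ limits the number of such choices. Since $F$ is fixed (up to isomorphism) and has $b$ distinct hyperedges and $\nu(F)$ connected components, a tuple $S$ with $\cP(S)\cong F$ is determined by an embedding of $V(F)$ into $V(H_n)$ together with a bounded (depending only on $k$ and $b$) amount of combinatorial data recording which of the $k$ slots receives which of the $b$ distinct edges. So up to a constant $\lesssim_{r,b}1$, the count $|\sS_{\varepsilon,k,b}(F)|$ is comparable to the number of labeled copies of $F$ in $H_n$ all of whose hyperedges lie in $\cA_\varepsilon(H_n)$.

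First I would prove the crude bound $|\sS_{\varepsilon,k,b}(F)|\lesssim_{r,b,\lambda} c_n^{|V(F)|-\nu(F)}$ by building up a copy of $F$ one connected component at a time, and within each component one hyperedge at a time, following a spanning-tree-like order on the hyperedges. The very first hyperedge of $F$ may be any of the $|E(H_n)|$ edges, and since $|E(H_n)|\sim \lambda c_n^{r-1}$ this contributes a factor $\lesssim_\lambda c_n^{r-1}$. Each subsequent hyperedge in the same component attaches to the already-placed part along an intersection of size $t\in[2,r-1]$ (if it shared only $t=1$ vertex it would not decrease the exponent in the way we need, and $t=r$ is impossible since the edges are distinct); once the $t$ shared vertices are fixed, the key point is that the number of ways to extend is controlled by $|M_t(\bm s,H_n)|$, which for edges in $\cA_\varepsilon(H_n)$ is at most $\varepsilon c_n^{r-t}$. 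Thus placing each new hyperedge introduces $r-t$ genuinely new vertices and costs a factor $\lesssim c_n^{r-t}$, which is exactly $c_n^{(\#\text{new vertices})}$. Summing the exponents over all $b$ hyperedges gives precisely $c_n^{|V(F)|-\nu(F)}$, since attaching along $t\ge 1$ shared vertices contributes $r-1$ to the exponent for a tree edge (the $\nu(F)$ roots each contributing the full $r-1$) — I would phrase this carefully so the arithmetic of the exponents matches $|V(F)|-\nu(F)$.

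For the improved bound, I would observe that the hypothesis $|V(F)|-\nu(F)<rb-b$ forces \emph{some} attaching step to occur along an intersection of size $t\ge 2$ rather than the minimal $t=1$: if every component were a "loose" tree where consecutive edges meet in a single vertex, the vertex count would be maximal, namely $|V(F)|-\nu(F)=b(r-1)=rb-b$. Since this maximum is not attained, at least one hyperedge in the build-up attaches along $t\ge 2$ common vertices. At that single step I would use the sharper estimate $|M_t(\bm s,H_n)|\le \varepsilon c_n^{r-t}$ \emph{with the explicit $\varepsilon$ retained} (rather than absorbing it into the constant), whereas all other steps use the same bound but with the $\varepsilon$ factor absorbed. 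This produces exactly one surplus factor of $\varepsilon$, yielding $|\sS_{\varepsilon,k,b}(F)|\lesssim_{r,b,\lambda}\varepsilon\, c_n^{|V(F)|-\nu(F)}$.

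The main obstacle I anticipate is bookkeeping the exponent of $c_n$ cleanly across an arbitrary attachment order. One must check that the greedy/tree ordering of hyperedges is valid for \emph{every} isomorphism type $F$ (connected or not) and that at each attachment step the shared vertex set $\bm s$ has size $t\ge 1$ with the extension governed by $M_t(\bm s,H_n)$; subtleties arise when a new hyperedge shares vertices with \emph{several} previously placed edges simultaneously, in which case I would bound the extension by the most restrictive single overlap and verify that this only helps (i.e. further reduces the exponent). I expect that carefully choosing the order of hyperedges — processing components independently and, within a component, fixing a spanning structure so each new edge meets the placed part in a well-defined $\bm s$ — makes both the crude exponent count and the identification of the $t\ge 2$ step (for the $\varepsilon$-gain) transparent, with the $\Theta(c_n^{r-1})$ estimate for $|E(H_n)|$ absorbing the $\lambda$ into the implied constants.
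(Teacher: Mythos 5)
Your overall strategy---building $\cP(S)$ one hyperedge at a time along a spanning order, charging a factor $c_n^{r-t}$ for an attachment of overlap $t$ so that the exponents telescope to $|V(F)|-\nu(F)$, and extracting a single factor of $\varepsilon$ from one attachment step with overlap at least $2$---is the same as the paper's, and your first (crude) bound goes through, since a $t=1$ step costs $|E(H_n)|\lesssim_\lambda c_n^{r-1}=c_n^{r-t}$ and a $t=r$ step costs $O_{r,b}(1)$, both consistent with the exponent count. The genuine gap is in the improved bound, and it sits exactly where you assert that ``$t=r$ is impossible since the edges are distinct.'' A new \emph{distinct} hyperedge can have all $r$ of its vertices inside the union of the previously placed edges while meeting each individual previous edge in at most one vertex: for $r=3$ take $\bm e_1=\{1,2,3\}$, $\bm e_2=\{3,4,5\}$, $\bm e_3=\{5,6,1\}$, $\bm e_4=\{2,4,6\}$; in this order the last step has $t_4=3=r$ even though $\bm e_4$ is new. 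Your counting argument only guarantees that $|V(F)|-\nu(F)<rb-b$ forces \emph{some} step with $t\ge 2$; if the only such steps have $t=r$, the definition of $\cA_\varepsilon(H_n)$ (which controls $|M_t(\bm s,H_n)|$ only for $t\in[2,r-1]$) yields no factor of $\varepsilon$, just an $O_{r,b}(1)$ count, and the bound $\varepsilon c_n^{|V(F)|-\nu(F)}$ does not follow.

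The missing ingredient is precisely the paper's Ordering Lemma (Lemma~\ref{lm:connected}): for a connected union with $|V(F)|-1<rb-b$ the hyperedges can always be \emph{reordered} so that every step has $t_i\ge 1$ and some step has $t_i\in[2,r-1]$ strictly---in the example above, the order $\bm e_1,\bm e_2,\bm e_4,\bm e_3$ gives $|\bm e_4\cap(\bm e_1\cup\bm e_2)|=|\{2,4\}|=2$. Proving that such a reordering always exists is not immediate (a naive pigeonhole on the overlaps with individual previous edges fails once a new edge meets $r$ or more earlier edges in single vertices), and the paper devotes a separate case analysis in Appendix~\ref{sec:connectedpf} to it. Your proposal needs either this reordering lemma or an alternative argument handling the ``fully absorbed'' $t=r$ steps before the factor of $\varepsilon$ can legitimately be extracted.
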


\begin{proof} To begin with suppose $F$ is connected and $|V(F)| - 1 < rb-b$. Then by Lemma \ref{lm:connected} any $S  \in \sS_{\varepsilon, k, b}(F)$ can be ordered as $S= (\bm e_1, \ldots,\bm e_k)$ such that the following holds:  
    
    \begin{enumerate}
        \item For all $2 \leq i \leq k$, 
        \begin{align}\label{eq:sequence} 
        t_i:= \left|\bm e_i \bigcap \left(\bigcup_{j = 1}^{i-1}\bm e_j\right) \right| \geq 1. 
        \end{align}
        That is, each hyperedge $\bm e_i$ intersects with the union of the previous hyperedges. 
                
        \item   
        For some $2 \leq i \leq k$, 
        $$ t_i:= \left|\bm e_i \bigcap \left(\bigcup_{j = 1}^{i-1}\bm e_j\right) \right| \in [2,  r]. $$
        That is, there is some hyperegde $\bm e_i$ that intersects the union of the previous hyperedges in at least 2 vertices.
           \end{enumerate} 
           
Note that the hyperedge $\bm e_i$ can be chosen in at most $|E(H_n)|\lesssim_{r,k, \lambda} c_n^{r-1}$ ways when $t_i=1$, in $O_{r, k}(\varepsilon c_n^{r-t_i})$ ways when $2 \leq t_i \leq r-1$ (recall the definition of  $\cA_{\varepsilon}(H_n)$ from \eqref{eq:AepsilonHn}), and in $O_{r, k}(1)$ ways when $t_i = r$, leading to
    $$|\sS_{\varepsilon, k, b}(F)| \lesssim_{r,k, \lambda} \varepsilon c_n^{|V(F)| -1}.$$
when $|V(F)|-1 < rb-b$.

Next, suppose $F$ is connected and $|V(F)| - 1 = rb-b$.  In this case, $S = (\bm e_1, \ldots,\bm e_k)\in \sS_{\varepsilon, k, b}(F)$ satisfies the equality in \eqref{eq:sequence}, regardless of the ordering. Hence, each $\bm e_i$ can be chosen in at most $|E(H_n)|\lesssim_{r,k, \lambda} c_n^{r-t_i}$ ways and  
 $$|\sS_{\varepsilon, k, b}(F)| \lesssim_{r,k, \lambda} c_n^{|V(F)| -1}.$$ 
 
 If $F$ is disconnected, by applying the above bounds to each of the connected components the result follows. 
\end{proof}

Using the above lemma, we can now complete the proof of Lemma \ref{lm:momentH}. Hereafter, for any hypergraph $F$ we use $\nu(F)$ to denote the number of connected components of $F$. By the multinomial expansion and the union bound, we have
    \begin{align}
        |\bE T_{\varepsilon}^+(H_n)^k - \bE J_{\varepsilon}^+(H_n)^k| &\le \sum_{b=1}^{k} \sum_{S = (\bm e_1, \bm e_2, \ldots, \bm e_k) \in \sS_{\varepsilon, k, b}} \left |\bE \prod_{t=1}^k\bm 1\{X_{=\bm e_t}\} - \prod_{t=1}^k\bE J_{\bm e_t}\right | \nonumber \\
        &= \sum_{b=1}^{k} \sum_{S \in \sS_{\varepsilon, k, b}} \left |\frac{1}{c_n^{|V(\cP(S))| - |\nu(\cP(S))|}} - \frac{1}{c_n^{br-b}}\right | \nonumber \\
        &\lesssim_{r,k} \sum_{b=1}^{k} \sum_{F \in \cH_{r, b}}\left |\frac{1}{c_n^{|V(F)| - |\nu(F)|}} - \frac{1}{c_n^{br-b}}\right |\cdot |\sS_{\varepsilon, k, b}(F)| , 
        \label{eq:momentXJTHn}
    \end{align} 
    where $\cH_{r, b}$ is the collection of $r$-uniform hypergraphs with $b$ hyperedges. If $|V(F)| - |\nu(F)| = br-b$, then the corresponding term in the sum above is zero. Moreover, whenever $\sS_{\varepsilon, k, b}(F)$ is non-empty, $|V(F)| - |\nu(F)| \leq br-b$. Hence, it suffices to consider only those $F \in \cH_{r, b}$ for which $|V(F)| - |\nu(F)| < br-b$. In this case, by Lemma \ref{lm:countS}, 
    $$|\sS_{\varepsilon, k, b}(F)| \lesssim_{r,k, \lambda} \varepsilon c_n^{|V(F)| - \nu(F)}.$$
Hence,     
\begin{align*}
        |\bE T_{\varepsilon}^+(H_n)^k - \bE J_{\varepsilon}^+(H_n)^k| &\lesssim_{r,k} \sum_{b=1}^{k} \sum_{F \in \cH_{r, b}}\left |\frac{1}{c_n^{|V(F)| - |\nu(F)|}} - \frac{1}{c_n^{br-b}}\right |\cdot |\sS_{\varepsilon, k, b}(F)|\\
        &\lesssim_{r,k} \sum_{b=1}^{k} \sum_{F \in \cH_{r, b}}\varepsilon \cdot \bm 1\{|V(F)| - |\nu(F)| < br-b\} , 
    \end{align*}
    which goes to 0 as $n \rightarrow \infty$ followed by $\varepsilon \rightarrow 0$. This completes the proof of Lemma \ref{lm:momentH}. 
\hfill $\Box$

\subsubsection{Completing the Proof of Theorem \ref{thm:THnKpoisson} when $r = 2$.} 
\label{sec:THnKgraphpf}

Note that when $r=2$ the set \eqref{eq:AepsilonHn} is empty (technically, it is not even defined), and the proof in Section \ref{sec:THnKhypergraphpf} breaks down. In this case, 
defining 
$$J(H_n) := \sum_{\bm e \in E(H)}J_{\bm e} , $$ 
where $\{J_{\bm e} : \bm e \in  E(H_n) \}$ is a collection of independent $\Ber(\frac{1}{c_n})$ random variables, one can directly show the following: For all integers $k \ge 1$, 
    \begin{align}\label{eq:momentgraph}
     |\bE T(H_n)^k - \bE J(H_n)^k|  \rightarrow 0 , 
     \end{align} 
    as $n \rightarrow \infty$. The proof of \eqref{eq:momentgraph} can be found in \cite[Lemma 2.4]{bhattacharya2017universal}. Here, we sketch the argument for the sake of completeness. To this end, fix integers $1 \leq b \le k$, and define
$$\sS_{k, b}= \{ (\bm e_1, \ldots, \bm e_k):  \bm e_i \in E(H_n), \text{ for all }  1 \leq i \leq k,  \text{ and } |\{\bm e_1, \ldots, \bm e_k\}| = b \} .$$ 
In words, $\sS_{k, b}$ is the collection of $k$-tuples of edges (not necessarily distinct) in $E(H_n)$ such that exactly $b$ (out of the $k$) edges are distinct. Given $S = (\bm e_1, \ldots, \bm e_k) \in \sS_{k, b}$, we denote the graph formed by the union of the edges $\{\bm e_1, \ldots, \bm e_k\}$ by $\cP(S)$. Note that $\cP(S)$ has $b$ distinct edges. Given a graph $F$, define
    \begin{align*}
    \sS_{k, b}(F) := \{ S = (\bm e_1, \ldots,\bm e_k) \in \sS_{k, b}: \cP(S) \text{  is isomorphic to }  F \} . 
    \end{align*}
    Note that if $F$ is connected and $\sS_{k, b}(F)$ is non-empty, then $|V(F)| - 1\leq b$. Hence, if $F$ has $\nu(F)$ connected components and $\sS_{k, b}(F)$ is non-empty, then 
$|V(F)| -\nu(F) \leq b$. 
Then, as in \eqref{eq:momentXJTHn}, 
    \begin{align*}
        |\bE T(H_n)^k - \bE J(H_n)^k| & \lesssim_{k} \sum_{b=1}^{k} \sum_{F \in \cH_{2, b}}\left |\frac{1}{c_n^{|V(F)| - |\nu(F)|}} - \frac{1}{c_n^{b}}\right |\cdot |\sS_{k, b}(F)| , \nonumber \\ 
        & \lesssim_{k} \sum_{b=1}^{k} \sum_{\substack{ F \in \cH_{2, b} \\ |V(F)| - |\nu(F)| < b }} \frac{|\sS_{k, b}(F)|}{c_n^{|V(F)| - |\nu(F)|} }   ,  
    \end{align*} 
Note that if $F \in \cH_{2, b}$, then $|E(F)| = b$. Hence,  $|V(F)| - |\nu(F)| < b $, implies $|E(F)| > |V(F)| - |\nu(F)|$, which means that $F$ contains a cycle. Then, from \cite[Lemma 2.3]{bhattacharya2017universal} we know that 
    $$|\sS_{k, b}(F)| \lesssim_{r,k, \lambda} = o(c_n^{|V(F)| - \nu(F)}).$$
This implies the result in \eqref{eq:momentgraph}, since $|\cH_{2, b}| \lesssim_k 1$. 

Combining \eqref{eq:momentgraph} with the fact that $J(H_n) \dto \Pois(\lambda)$, we get the result in  Theorem \ref{thm:THnKpoisson} for $r = 2$.

\begin{remark}\label{remark:poissonhypergraph} As mentioned in Remark \ref{remark:steinsmethod}, Theorem \ref{thm:THn} does not follow by applying Stein's method based on a generic dependency graph construction. For instance, consider a sequence of 3-uniform hypergraphs $H_n = (V(H_n), E(H_n))$ colored with $c_n$ colors as in \eqref{eq:cn} such that $\bE(T(H_n)) = \frac{|E(H_n)|}{c_n^2} \rightarrow \lambda$. Then a natural way to construct a dependency graph for the collection of random variables $\{ \bm 1\{\bm X_{= \bm e} \} : \bm e \in E(H_n) \}$ is to put an edge between $\bm 1\{\bm X_{= \bm e}\}$ and $\bm 1\{\bm X_{= \bm e'}\}$ whenever $\bm e \cap \bm e' \ne \emptyset$, for $\bm e, \bm e' \in E(H_n)$. 
Then, applying \cite[Theorem 15]{chatterjee2005exchangeable} one can show that $T(H_n) \dto \lambda$ whenever the following conditions hold: 
$$|\cK(1, H_n)|=o(|E(H_n)|^\frac{3}{2}) \quad \text{and} \quad |\cK(2, H_n)|=o(|E(H_n)|^\frac{3}{2}),$$ 
where $\cK(\cdot, H_n)$ is as in Definition \ref{defn:K}. 
However, this does not give the second moment phenomenon because $|\cK(1, H_n)|$ is not controlled by the variance condition (observe that in Theorem \ref{thm:THnKpoisson} the condition on $|\cK(1, H_n)|$ is not required). 
This is because, unlike the direct moment-based approach used in this paper, the generic dependency graph construction is unable to leverage the fact that the $\Cov(\bm 1\{X_{=\bm e}\}, \bm 1\{X_{=\bm e'}\})=0$, whenever $|\bm e \cap \bm e'| = 1$. It would be interesting to see whether a more sophisticated dependency graph construction or other versions of Stein's method can be used to obtain rates of convergence for Theorem \ref{thm:THn} (and also Theorems \ref{thm:hypergraphr} and \ref{thm:r1r2}). 
\end{remark} 

\section{Proof of Theorem \ref{thm:hypergraphr} }
\label{sec:hypergraphrpf}

Let $H_n^{(1)}  = (V(H_n^{(1)}), E(H_n^{(1)}))$ and $H_n^{(2)}  = (V(H_n^{(2)}), E(H_n^{(2)}))$ be two sequences of $r$-uniform hypergraphs on the same set of vertices. We will denote the common vertex set by $V_n$, that is, $V(H_n^{(1)}) = V(H_n^{(2)}) = V_n$. It is now helpful to introduce the following  notation: 

    \begin{itemize}
        \item Denote by $H_n^{(1)} \cup H_n^{(2)}$ the hypergraph with vertex set $V_n$ and edge set $E(H_n^{(1)}) \cup E(H_n^{(2)})$.
        
    \item Denote by $H_n^{(1)} \setminus H_n^{(2)}$ the hypergraph with vertex set $V_n$ and edge set $E(H_n^{(1)}) \setminus E(H_n^{(2)})$ and $H_n^{(2)} \setminus H_n^{(1)}$ similarly.
    
        \item Denote by $H_n^{(1, 2)} := H_n^{(1)} \cap H_n^{(2)}$ the hypergraph with vertex set $V_n$ and edge set $E(H_n^{(1)}) \cap E(H_n^{(2)})$.
    \end{itemize}

Theorem \ref{thm:hypergraphr} follows from a more general result about the joint distribution of $(T(H_n^{(1)}\setminus H_n^{(2)}), T(H_n^{(2)}\setminus H_n^{(1)}), T(H_n^{(1, 2)}))$, which is stated below.

\begin{thm}
  \label{thm:jointr}
  Fix an integer $r \ge 2$. Let $\bm H_n =(H_n^{(1)}, H_n^{(2)})$ be a sequence of 2-hypermultiplexes on the same vertex set $V_n$ with $|E(H_n^{(1)})| \to \infty$ and $|E(H_n^{(2)})| \to \infty$. Suppose $V_n$ is colored uniformly at random with $c_n$ colors as in \eqref{eq:cn}, such that the following hold:
    \begin{itemize}
      \item  There exists constants $\lambda_1, \lambda_2, \lambda_{1,2} \ge 0$ with $\lambda_1 \ge \lambda_{1,2}, \lambda_2 \ge \lambda_{1,2}$ such that 
            \begin{align}\label{eq:EH1H2r}
            \frac{|E(H_n^{(1)})|}{c_n^{r-1}} \to \lambda_1,~\frac{|E(H_n^{(2)})|}{c_n^{r-1}} \to \lambda_2, \text{ and } \frac{|E(H_n^{(1, 2)})|}{c_n^{r-1}} \to \lambda_{1,2}. 
            \end{align}

      \item  For all $t \in [2, r - 1]$
      \begin{align}\label{eq:KH1H2hyperedger}
      \frac{|\cK(t, H_n^{(1)})|}{c_n^{2r-t-1}} \to 0 \text{ and } \frac{|\cK(t, H_n^{(2)})|}{c_n^{2r-t-1}} \to 0 , 
      \end{align}
     where $\cK(t, \cdot)$ is defined in \eqref{defn:K}. 
  \end{itemize} 
Then 
$$\begin{pmatrix}
    T(H_n^{(1)}\setminus H_n^{(2)})\\
    T(H_n^{(2)}\setminus H_n^{(1)})\\
    T(H_n^{(1, 2)})
\end{pmatrix} \dto \begin{pmatrix}
        Z_1 \\
        Z_2 \\
        Z_{1,2}
        \end{pmatrix} , $$
  where $Z_1 \sim \Pois(\lambda_1 - \lambda_{1,2})$, $Z_2 \sim \Pois(\lambda_2 - \lambda_{1,2})$, and $Z_{1,2} \sim \Pois(\lambda_{1,2})$ are independent. 
\end{thm}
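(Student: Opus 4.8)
The strategy is to extend the marginal method of moments used in Theorem~\ref{thm:THnKpoisson} to the joint setting. Since $E(H_n^{(1)}\setminus H_n^{(2)})$, $E(H_n^{(2)}\setminus H_n^{(1)})$, and $E(H_n^{(1,2)})$ are pairwise disjoint edge sets, I would first establish Poisson convergence of each marginal and then upgrade to joint convergence via the method of mixed (joint) moments. Concretely, I would introduce the truncation set $\cA_{\varepsilon}(\cdot)$ from \eqref{eq:AepsilonHn}, applied to each of the three hypergraphs $H_n^{(1)}\setminus H_n^{(2)}$, $H_n^{(2)}\setminus H_n^{(1)}$, and $H_n^{(1,2)}$, and correspondingly split each of $T(H_n^{(1)}\setminus H_n^{(2)})$, $T(H_n^{(2)}\setminus H_n^{(1)})$, $T(H_n^{(1,2)})$ into a main term and a remainder term as in \eqref{eq:THnepsilon}–\eqref{eq:Tremainder}.

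The first step is to check that the hypotheses \eqref{eq:EH1H2r}–\eqref{eq:KH1H2hyperedger} descend to each of the three sub-hypergraphs. The edge-count conditions give $|E(H_n^{(1)}\setminus H_n^{(2)})|/c_n^{r-1}\to\lambda_1-\lambda_{1,2}$, $|E(H_n^{(2)}\setminus H_n^{(1)})|/c_n^{r-1}\to\lambda_2-\lambda_{1,2}$, and $|E(H_n^{(1,2)})|/c_n^{r-1}\to\lambda_{1,2}$, which are exactly the desired Poisson means. For the pair-intersection conditions, note that $\cK(t,H_n^{(1)}\setminus H_n^{(2)})$, $\cK(t,H_n^{(2)}\setminus H_n^{(1)})$, and $\cK(t,H_n^{(1,2)})$ are each contained in $\cK(t,H_n^{(1)})\cup\cK(t,H_n^{(2)})$ (any pair of edges sharing $t$ vertices within a sub-hypergraph is such a pair in one of the two layers), so \eqref{eq:KH1H2hyperedger} forces all these quantities to be $o(c_n^{2r-t-1})$. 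Hence Lemma~\ref{lm:remainderH} applies to each of the three remainder terms, showing each converges to $0$ in $L_1$; by the triangle inequality the joint distribution is asymptotically unaffected by truncation.

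The core step is the joint moment comparison generalizing Lemma~\ref{lm:momentH}. I would define the surrogate by replacing every indicator $\bm 1\{X_{=\bm e}\}$, over all edges in the three disjoint truncated edge sets, by \emph{mutually independent} $\Ber(\tfrac{1}{c_n^{r-1}})$ variables, yielding independent sums $J^+(H_n^{(1)}\setminus H_n^{(2)})$, $J^+(H_n^{(2)}\setminus H_n^{(1)})$, $J^+(H_n^{(1,2)})$. I would then bound the difference of an arbitrary \emph{mixed} moment $\bE[(T^+_{1})^{k_1}(T^+_{2})^{k_2}(T^+_{1,2})^{k_3}]$ and its surrogate analogue. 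Expanding multinomially indexes tuples of edges drawn from the union of the three disjoint edge sets; the key algebraic point, exactly as in \eqref{eq:momentXJTHn}, is that $\bE\prod_t\bm 1\{X_{=\bm e_t}\}=c_n^{-(|V(\cP(S))|-\nu(\cP(S)))}$ while the surrogate product equals $c_n^{-(br-b)}$, and these agree precisely on configurations where the union $\cP(S)$ is a disjoint union of single edges. The counting bound of Lemma~\ref{lm:countS} applies verbatim to the union hypergraph (since every edge still lives in $\cA_\varepsilon$ of its layer, the per-step choice counts $c_n^{r-t_i}$, $\varepsilon c_n^{r-t_i}$, or $O(1)$ are unchanged), giving an $O(\varepsilon c_n^{|V(F)|-\nu(F)})$ bound whenever $\cP(S)$ has an overlapping component. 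Therefore every mixed moment of the truncated triple converges to the corresponding mixed moment of three independent Poissons with means $\lambda_1-\lambda_{1,2},\lambda_2-\lambda_{1,2},\lambda_{1,2}$.

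Finally, since the independent surrogate triple converges in distribution and in moments to $(Z_1,Z_2,Z_{1,2})$, and the product-Poisson law on $\bZ_{\ge0}^3$ is determined by its joint moments (each marginal being moment-determinate and the components independent), the method of moments yields joint convergence of the truncated triple to $(Z_1,Z_2,Z_{1,2})$; combining with the $L_1$-negligibility of the three remainders completes the proof. The main obstacle I anticipate is the bookkeeping in the joint moment expansion: one must verify that a tuple mixing edges from two \emph{different} disjoint layers contributes an overlapping (hence $\varepsilon$-suppressed) union whenever it differs from a disjoint-edge configuration, i.e.\ that cross-layer edge pairs sharing $\ge 2$ vertices are still captured by the combined $\cK(t,\cdot)$ control — which is why reducing the hypotheses to the union hypergraph in the first step is essential, and where care is needed so that Lemma~\ref{lm:countS}'s constants remain uniform across the three edge-set types.
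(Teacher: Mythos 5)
Your proposal is correct and follows essentially the same route as the paper: truncate into main and remainder terms, kill the remainders in $L_1$ via Lemma \ref{lm:remainderH}, compare joint mixed moments of the truncated triple against an independent Bernoulli surrogate via the counting in Lemma \ref{lm:countS} and the ordering Lemma \ref{lm:connected}, and conclude by moment-determinacy of the product Poisson law. The only substantive difference is that you truncate each of the three sub-hypergraphs by its own set $\cA_{\varepsilon}(\cdot)$, whereas the paper truncates by $\cA_{\varepsilon}(H_n^{(1)}\cup H_n^{(2)})$ and therefore needs the cross-layer bound \eqref{eq:KH1H2joint} to control $\cK(t,H_n^{(1)}\cup H_n^{(2)})$; your per-layer version also works (the $\varepsilon$-suppression in the moment expansion only ever requires bounding the choices of the \emph{newly added} edge, which lives in a single layer), though you should note, as the paper does, that the case $r=2$ degenerates (the index range $[2,r-1]$ in \eqref{eq:AepsilonHn} is empty) and must be handled by the cycle-counting argument of Section \ref{sec:THnKgraphpf}.
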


The proof of Theorem \ref{thm:jointr} is given in Section \ref{sec:jointrpf}. We now complete the proof of Theorem \ref{thm:hypergraphr} by showing that that the conditions of Theorem \ref{thm:hypergraphr} imply the conditions of Theorem \ref{thm:jointr}. First note that the assumptions $\lim_{n \to \infty}\Var[T(H_n^{(1)})] = \lambda_1$ and 
$ \lim_{n \to \infty}\Var[T(H_n^{(2)})] = \lambda_2$ imply, from arguments as in \eqref{eq:varianceTHn}, that for all $t \in [2, r - 1]$,      
\begin{align}\label{eq:KH1H2}
\frac{|\cK(t, H_n^{(1)})|}{c_n^{2r-t-1}} \to 0 \text{ and } \frac{|\cK(t, H_n^{(2)})|}{c_n^{2r-t-1}} \to 0 . 
\end{align} 
Hence, it remains to show that 
    \begin{align}\label{eq:EH12}
    \lim_{n \to \infty}\Cov[T(H_n^{(1)}), T(H_n^{(2)})] = \lambda_{1,2} \text{ implies } 
    \frac{|E(H_n^{(1, 2)})|}{c_n^{r-1}} \to \lambda_{1,2} .  
    \end{align}
    For this note that for two edges $\bm e_1, \bm e_2 \in E(H_n^{(1)}) \cup E(H_n^{(2)})$ such that $\bm e_1\cap \bm e_2 \ne \emptyset$, 
    $$\Cov[ {\bf 1}\{X_{=\bm e_1}\}, {\bf 1}\{X_{=\bm e_2}\} ] = \frac{1}{c_n^{2r - |\bm e_1\cap \bm e_2| - 1}} - \frac{1}{c_n^{2r-2}} , $$
    which is nonzero if and only if $2 \le |\bm e_1\cap \bm e_2| \le r$. Hence, 
    \begin{align}\label{eq:H1H2Q}
    \Cov[T(H_n^{(1)}), T(H_n^{(2)})] = Q_{1,n} + Q_{2,n} , 
    \end{align}
    where 
    $$Q_{1,n} = \frac{1}{c_n^{r-1}}\left(1 - \frac{1}{c_n^{r-1}}\right) |E(H_n^{(1, 2)})|$$
    and 
    $$Q_{2,n} \le \sum_{t=2}^{r-1}\frac{1}{c_n^{2r-t-1}}\left(1 - \frac{1}{c_n^{t-1}}\right)|\cK(t, H_n^{(1)}, H_n^{(2)})|, $$ 
      where $\cK(t, H_n^{(1)}, H_n^{(2)})$ is the set of ordered pairs of hyperedges $(\bm e_1, \bm e_2)$ such that $\bm e_1 \in E(H_n^{(1)}), \bm e_2 \in E(H_n^{(2)})$ and $|\bm e_1 \cap \bm e_2| = t$. Note that for $t\in [2, r-1]$, 
    $$|\cK(t, H_n^{(1)}, H_n^{(2)})| \le |\cK(t, H_n^{(1)})| + |\cK(t, H_n^{(2)})| + |\cK(t, H_n^{(1)} \setminus H_n^{(2)}, H_n^{(2)} \setminus H_n^{(1)} )|. $$
    As $|\cK(t, H_n^{(1)})| = o(c_n^{2r-t-1})$ and $|\cK(t, H_n^{(2)})| = o(c_n^{2r-t-1})$ (recall \eqref{eq:KH1H2}), it suffices to bound $|\cK(t, H_n^{(1)} \setminus H_n^{(2)}, H_n^{(2)} \setminus H_n^{(1)} )|$.  To this end, note that 
    \begin{align}
        |\cK(t, H_n^{(1)} \setminus H_n^{(2)}, H_n^{(2)} \setminus H_n^{(1)} )| &\le \sum_{\bm s \in V(H_n)^t}M_t(\bm s, H_n^{(1)})\cdot M_t(\bm s, H_n^{(2)}) \nonumber \\
        &\lesssim \sum_{\bm s \in V(H_n)^t}\left\{ M_t(\bm s, H_n^{(1)})^2 + M_t(\bm s, H_n^{(2)})^2\right\} \nonumber \\ 
        &\lesssim_{r, t} \left\{ \cK_t(\bm s, H_n^{(1)})  + \cK_t(\bm s, H_n^{(2)}) \right\} \tag*{(by \eqref{eq:EHnAHn})} \nonumber \\ 
        & = o(c_n^{2r-t-1}) , 
        \label{eq:KH1H2joint}
    \end{align}
    where the last step uses \eqref{eq:KH1H2}. This shows $Q_{2, n} \rightarrow 0$. Then, since $\lim_{n \to \infty}\Cov[T(H_n^{(1)}), T(H_n^{(2)})] = \lambda_{1, 2}$ by assumption, from \eqref{eq:H1H2Q} we must have $Q_{1, n} \rightarrow  \lambda_{1, 2}$. This implies $\frac{|E(H_n^{(1, 2)})|}{c_n^{r-1}} \to \lambda_{1,2}$ and establishes \eqref{eq:EH12}. Now, we can apply Theorem \ref{thm:jointr} and the continuous mapping theorem to get, 
    $$ \begin{pmatrix}
    T(H_n^{(1)}) \\ 
    T(H_n^{(2)}) 
  \end{pmatrix} = 
    \begin{pmatrix}
    T(H_n^{(1)}\setminus H_n^{(2)}) + T(H_n^{(1, 2)})\\
    T(H_n^{(2)}\setminus H_n^{(1)}) + T(H_n^{(1, 2)}) 
\end{pmatrix} \dto \begin{pmatrix}
        Z_1 + Z_{1,2} \\
        Z_2 + Z_{1,2} 
    \end{pmatrix} , $$ 
    where $Z_1, Z_2, Z_{1, 2}$ are as defined in Theorem \ref{thm:jointr}. This completes the proof of Theorem \ref{thm:hypergraphr}. \hfill $\Box$

\subsection{Proof of Theorem \ref{thm:jointr}} 
\label{sec:jointrpf}

We will present the proof for $r \geq 3$. For $r=2$, the proof needs to be modified as in Section \ref{sec:THnKgraphpf}.

Fix $r \geq 3$. Throughout, we will let $H_n = H_n^{(1)} \cup H_n^{(2)}$. Now, fix $\varepsilon > 0$ and define
$$ \bm T : = \begin{pmatrix}
    T(H_n^{(1)}\setminus H_n^{(2)})\\
    T(H_n^{(2)}\setminus H_n^{(1)})\\
    T(H_n^{(1, 2)})
\end{pmatrix}  = \begin{pmatrix}
    T_{\varepsilon}^+(H_n^{(1)}\setminus H_n^{(2)})\\
    T_{\varepsilon}^+(H_n^{(2)}\setminus H_n^{(1)})\\
    T_{\varepsilon}^+(H_n^{(1, 2)})
\end{pmatrix} + \begin{pmatrix}
    T_{\varepsilon}^-(H_n^{(1)}\setminus H_n^{(2)})\\
    T_{\varepsilon}^-(H_n^{(2)}\setminus H_n^{(1)})\\
    T_{\varepsilon}^-(H_n^{(1, 2)})  
\end{pmatrix} := \bm{T}_{n, \varepsilon}^+ + \bm{T}_{n, \varepsilon}^- , $$ 
where 
\begin{align*}
    T_{\varepsilon}^+(F) &= \sum_{ \bm e \in E(F)}\bm 1\{X_{=\bm e}\} \bm 1\{ \bm e \in \cA_{\varepsilon}(H_n) \} \\
    \end{align*} 
    and 
    \begin{align*}
     T_{\varepsilon}^-(F) &= \sum_{ \bm e \in E(F)}\bm 1\{X_{=\bm e}\} \bm 1\{ \bm e \notin \cA_{\varepsilon}(H_n) \} , 
     \end{align*}
     for $F = \{H_n^{(1)}\setminus H_n^{(2)}, H_n^{(2)}\setminus H_n^{(1)}, H_n^{(1, 2)}\}$. 
Notice that 
\begin{align}\label{eq:TremainderH1H2}
T_{\varepsilon}^-(H_n) & = \sum_{ \bm e \in E(H_n) }\bm 1\{X_{=\bm e}\} \bm 1\{ \bm e \notin \cA_{\varepsilon}(H_n) \}  \nonumber \\ 
& = T_{\varepsilon}^-(H_n^{(1)}\setminus H_n^{(2)}) + T_{\varepsilon}^-(H_n^{(2)}\setminus H_n^{(1)})+ T_{\varepsilon}^-(H_n^{(1, 2)}). 
\end{align}
Since $|\cK(t, H_n)| = o(c^{2r-t-1})$, for all $t \in [2, r-1]$ (recall \eqref{eq:KH1H2} and \eqref{eq:KH1H2joint}), by arguments as in Lemma \ref{lm:remainderH}, for each fixed $\varepsilon > 0$, $T_{\varepsilon}^-(H_n) \overset{L_1}{\to} 0$, as $n \to \infty$. This implies, from \eqref{eq:TremainderH1H2}, $\bm T_{n, \varepsilon}^{-} \overset{L_1}{\to} 0$. Hence, it suffices to analyze $\bm T_{n, \varepsilon}^{+}$. 
As in Section \ref{sec:poissonmarginalpf}, we will approximate the variables $\{ \bm 1\{X_{=\bm e}\} : \bm e \in  \cA_{\varepsilon}(H_n)\}$ by independent Bernoulli random variables. In particular, define 
\begin{align*}
    J_{\varepsilon}^+(F) &= \sum_{\bm e \in E(F) \cap \cA_{\varepsilon}(H_n) } J_{\bm e}, \quad \text{ for } F = \{H_n^{(1)}\setminus H_n^{(2)}, H_n^{(2)}\setminus H_n^{(1)}, H_n^{(1, 2)}\} ,  
\end{align*} 
where $\{J_{\bm e} : \bm e \in  \cA_{\varepsilon}(H_n)\}$ is a collection of independent $\Ber(\frac{1}{c_n^{r-1}})$ random variables. 
This implies,  
$$\bE J_{\varepsilon}^+(F) = \frac{|E(F) \cap \cA_{\varepsilon}(H_n)|}{c_n^{r-1}} =  \frac{|E(F)|}{c_n^{r-1}} - \frac{|E(F) \backslash \cA_{\varepsilon}(H_n)|}{c_n^{r-1}}.$$
Note that, for $F = \{H_n^{(1)}\setminus H_n^{(2)}, H_n^{(2)}\setminus H_n^{(1)}, H_n^{(1, 2)}\}$, 
$$\frac{|E(F) \backslash \cA_{\varepsilon}(H_n)|}{c_n^{r-1}} \leq \frac{|E(H_n) \backslash \cA_{\varepsilon}(H_n)|}{c_n^{r-1}} \rightarrow 0 , $$ 
by \eqref{eq:Hnremainder}. 
Hence, because $\bm J_{n, \varepsilon}^+$ is a sum of independent random vectors, 
$$\bm J_{n, \varepsilon}^+ : = 
\begin{pmatrix}
 J_{\varepsilon}^+(H_n^{(1)}\setminus H_n^{(2)})\\
 J_{\varepsilon}^+(H_n^{(2)}\setminus H_n^{(1)})\\
 J_{\varepsilon}^+(H_n^{(1, 2)})
\end{pmatrix}  \dto 
\begin{pmatrix}
        Z_1 \\
        Z_2 \\
        Z_{1,2}
        \end{pmatrix} := \bm Z , 
$$ 
where $Z_1, Z_2, Z_{1, 2}$ are as defined in Theorem \ref{thm:jointr}. 
The result in Theorem \ref{thm:jointr} now follows from the next lemma. 

\begin{lem}\label{lm:momentH1H2}
For all integers $k, \bar k, \underline k \ge 0$, 
    $$|\bE[T_{\varepsilon}^+(H_n^{(1)}\setminus H_n^{(2)})^k T_{\varepsilon}^+(H_n^{(2)}\setminus H_n^{(1)})^{\bar k} T_{\varepsilon}^+(H_n^{(1, 2)})^{\underline k} ] - \bE[J_{\varepsilon}^+(H_n^{(1)}\setminus H_n^{(2)})^k J_{\varepsilon}^+(H_n^{(2)}\setminus H_n^{(1)})^{\bar k} J_{\varepsilon}^+(H_n^{(1, 2)})^{\underline k} ] | \rightarrow 0 , $$ 
    as $n \rightarrow \infty$ followed by $\varepsilon \rightarrow 0$. 
\end{lem}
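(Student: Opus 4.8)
The plan is to mirror the moment-comparison argument of Lemma \ref{lm:momentH}, now carried out jointly across the three pairwise disjoint edge sets $E_1 := E(H_n^{(1)} \setminus H_n^{(2)})$, $E_2 := E(H_n^{(2)} \setminus H_n^{(1)})$, and $E_{12} := E(H_n^{(1, 2)})$. First I would expand the product of the three truncated moments by multilinearity, writing the left-hand expectation as a sum over combined tuples $S = (\bm e_1, \ldots, \bm e_k; \bm f_1, \ldots, \bm f_{\bar k}; \bm g_1, \ldots, \bm g_{\underline k})$, where $\bm e_i \in E_1 \cap \cA_{\varepsilon}(H_n)$, $\bm f_j \in E_2 \cap \cA_{\varepsilon}(H_n)$, and $\bm g_\ell \in E_{12} \cap \cA_{\varepsilon}(H_n)$, of the term $\bE \prod \bm 1\{X_{=\cdot}\}$; the surrogate expectation is the analogous sum of $\prod \bE J_{\cdot}$. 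The crucial structural observation is that $E_1, E_2, E_{12}$ are pairwise disjoint, so two edges from different groups can never coincide; hence the number of distinct edges in $S$ decomposes as $b = b_1 + b_2 + b_3$ (the distinct counts within each group), and because the surrogate Bernoullis are indexed by the edges and are mutually independent both across and within the groups, $\bE \prod J_{\cdot} = c_n^{-(r-1)b}$, exactly as in the marginal case.

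Next I would record that $\bE \prod \bm 1\{X_{=\cdot}\}$ depends only on the union hypergraph $\cP(S)$, namely it equals $c_n^{-(|V(\cP(S))| - \nu(\cP(S)))}$, where $\nu(\cdot)$ counts connected components. Grouping the combined tuples by the isomorphism type $F$ of $\cP(S)$ (of which there are only $O_{r, k, \bar k, \underline k}(1)$ relevant types, each an $r$-uniform hypergraph with $b$ edges), the difference of the two expectations is bounded, as in \eqref{eq:momentXJTHn}, by
$$\sum_{b} \sum_{F} \Big| c_n^{-(|V(F)| - \nu(F))} - c_n^{-(r-1)b} \Big| \cdot |\sS(F)|,$$
where $\sS(F)$ denotes the set of combined tuples whose union is isomorphic to $F$. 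Whenever $\sS(F)$ is nonempty one has $|V(F)| - \nu(F) \le (r-1)b$, and the summand vanishes identically when equality holds, so only the strictly deficient types with $|V(F)| - \nu(F) < (r-1)b$ survive.

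For those deficient types I would bound $|\sS(F)|$ using Lemma \ref{lm:countS}. The point is that every edge of a combined tuple lies in $\cA_{\varepsilon}(H_n)$ and the union $\cP(S)$ is a subhypergraph of $H_n = H_n^{(1)} \cup H_n^{(2)}$, so the greedy edge-by-edge ordering underlying Lemma \ref{lm:countS} applies verbatim to the union, irrespective of which layer each edge came from. Since a combined tuple is in particular a length-$(k + \bar k + \underline k)$ tuple of edges from $\cA_{\varepsilon}(H_n)$, we have $|\sS(F)| \le |\sS_{\varepsilon, k + \bar k + \underline k, b}(F)| \lesssim_{r, k, \bar k, \underline k, \lambda} \varepsilon\, c_n^{|V(F)| - \nu(F)}$ for each deficient $F$. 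Combining this with $| c_n^{-(|V(F)| - \nu(F))} - c_n^{-(r-1)b}| \le c_n^{-(|V(F)| - \nu(F))}$ yields a contribution of order $\varepsilon$ per type, and summing over the finitely many types gives a total bound of $O_{r, k, \bar k, \underline k}(\varepsilon)$, which vanishes as $n \to \infty$ followed by $\varepsilon \to 0$.

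I expect the only genuine subtlety, as opposed to routine bookkeeping, to be the disjointness reduction in the first step: one must verify that restricting the combined tuple to three fixed groups (rather than taking an arbitrary ordered tuple from $\cA_{\varepsilon}(H_n)$) breaks neither the probability computation for $\bE \prod \bm 1\{X_{=\cdot}\}$ nor the surrogate factorization. Both are fine precisely because the color-equality events and the Bernoulli surrogates are functions of the edges themselves, so the group labels are irrelevant to the expectations and serve only to index which combined tuples appear in the expansion. All of the genuine combinatorial content is already contained in Lemma \ref{lm:countS}, which we invoke as a black box on the union hypergraph $H_n$.
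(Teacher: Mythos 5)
Your proposal is correct and follows essentially the same route as the paper: a multinomial expansion over combined tuples, the observation that the pairwise disjointness of the three edge sets makes the total number of distinct edges additive across groups, and then a reduction to the marginal moment-comparison argument by forgetting the group labels and invoking Lemma \ref{lm:countS} on the union hypergraph $H_n^{(1)} \cup H_n^{(2)}$. The paper states this reduction tersely ("the result now follows by applying the arguments in the proof of Lemma \ref{lm:momentH}"), and your write-up correctly fills in exactly those details, including the one genuine subtlety you flag.
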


\begin{proof} 
Note that by the multinomial expansion, 
\begin{align*}
\bE[T_{\varepsilon}^+(H_n^{(1)}\setminus H_n^{(2)})^k T_{\varepsilon}^+(H_n^{(2)}\setminus H_n^{(1)})^{\bar k} T_{\varepsilon}^+(H_n^{(1, 2)})^{\underline k} ] 
& = \sum_{b = 1}^{k} \sum_{\bar b = 1}^{\bar k} \sum_{\underline b = 1}^{\underline k} \sum_{S \in \sS_{\varepsilon, k + \bar k + \underline k , b + \bar b+ \underline b}} \frac{1}{c_n^{|V(\cP(S))| - |\nu(\cP(S))|}} \nonumber.
\end{align*} 
Similarly, 
\begin{align*}
\bE[J_{\varepsilon}^+(H_n^{(1)}\setminus H_n^{(2)})^k J_{\varepsilon}^+(H_n^{(2)}\setminus H_n^{(1)})^{\bar k} J_{\varepsilon}^+(H_n^{(1, 2)})^{\underline k} ] & = \sum_{b = 1}^{k} \sum_{\bar b = 1}^{\bar k} \sum_{\underline b = 1}^{\underline k} \sum_{S \in \sS_{\varepsilon, k + \bar k + \underline k , b + \bar b+ \underline b}} \frac{1}{c_n^{(b + \bar b+ \underline b) r - (b + \bar b+ \underline b)}}
\end{align*} 
Grouping the terms, we obtain
\begin{align*}
& \left|\bE[T_{\varepsilon}^+(H_n^{(1)}\setminus H_n^{(2)})^k T_{\varepsilon}^+(H_n^{(2)}\setminus H_n^{(1)})^{\bar k} T_{\varepsilon}^+(H_n^{(1, 2)})^{\underline k} ]  - \bE[J_{\varepsilon}^+(H_n^{(1)}\setminus H_n^{(2)})^k J_{\varepsilon}^+(H_n^{(2)}\setminus H_n^{(1)})^{\bar k} J_{\varepsilon}^+(H_n^{(1, 2)})^{\underline k} ] \right| \\
\le & \sum_{b = 1}^{k} \sum_{\bar b = 1}^{\bar k} \sum_{\underline b = 1}^{\underline k} \sum_{S \in \sS_{\varepsilon, k + \bar k + \underline k , b + \bar b+ \underline b}} \left |\frac{1}{c_n^{|V(\cP(S))| - |\nu(\cP(S))|}} - \frac{1}{c_n^{(b + \bar b+ \underline b) r - (b + \bar b+ \underline b)}}\right|\\
\lesssim & \sum_{B = 1}^{k+\bar k+\underline k} \sum_{S \in \sS_{\varepsilon, k + \bar k + \underline k , B}} \left|\frac{1}{c_n^{|V(\cP(S))| - |\nu(\cP(S))|}} - \frac{1}{c_n^{B r - B}} \right|.  
\end{align*} 

The result in Lemma \ref{lm:momentH1H2} now follows by applying the arguments in the proof of Lemma \ref{lm:momentH}. 
\end{proof}

\section{Proof of Theorem \ref{thm:r1r2} }
\label{sec:r1r2pf}

%
%

As in the previous sections, the following theorem will imply the result in Theorem \ref{thm:r1r2}.

\begin{thm}
  \label{thm:jointr1r2}
 Fix integers $2 \le r_1 < r_2$. Let $\bm H_n = (H_n^{(1)}, H_n^{(2)})$ be a sequence of 2-hypermultiplexes on the same set of vertices $V_n$, such that 
$H_n^{(1)}$ is $r_1$-uniform, $H_n^{(2)}$ is $r_2$-uniform, $|E(H_n^{(1)})| \to \infty,$ and $|E(H_n^{(2)})| \to \infty$. Suppose $V_n$ is colored uniformly at random with $c_n$ colors as in \eqref{eq:cn}, such that the following hold: 
      \begin{itemize}
      \item  There exist constants $\lambda_1, \lambda_2 \ge 0$ such that 
            \begin{align}\label{eq:EH1H2r1r2}
            \frac{|E(H_n^{(1)})|}{c_n^{r_1-1}} \to \lambda_1,~\frac{|E(H_n^{(2)})|}{c_n^{r_2-1}} \to \lambda_2 . 
            \end{align}
        
      \item  For all $t_1 \in [2, r_1 - 1]$ and $t_2 \in [2, r_2 - 1]$, 
      \begin{align}\label{eq:KH1H2hyperedger1r2}
      \frac{|\cK(t_1, H_n^{(1)})|}{c_n^{2r_1-t_1-1}} \to 0 \text{ and } \frac{|\cK(t_2, H_n^{(2)})|}{c_n^{2r_2-t_2-1}} \to 0 , 
      \end{align} 
     where $\cK(\cdot, \cdot)$ is defined in \eqref{defn:K}. 
  \end{itemize} 
Then 
  $$\bm T(\bm H_n)= \left(\begin{array}{l}
    T(H_n^{(1)}) \\
    T(H_n^{(2)})
    \end{array}\right) \dto \left(\begin{array}{l}
      Z_1\\
      Z_2
      \end{array}\right) , $$
    where $Z_1 \sim \Pois(\lambda_1), Z_2 \sim \Pois(\lambda_2)$ are independent. 
\end{thm}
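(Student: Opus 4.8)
The plan is to follow the truncation-plus-moment-comparison scheme used for Theorem~\ref{thm:THnKpoisson} and Theorem~\ref{thm:jointr}, adapting it to two layers of differing uniformity. First, note that the hypotheses \eqref{eq:EH1H2r1r2} and \eqref{eq:KH1H2hyperedger1r2} are exactly the hypotheses of Theorem~\ref{thm:THnKpoisson} applied separately to $H_n^{(1)}$ (with parameter $r_1$) and to $H_n^{(2)}$ (with parameter $r_2$); hence the marginals already satisfy $T(H_n^{(1)}) \dto \Pois(\lambda_1)$ and $T(H_n^{(2)}) \dto \Pois(\lambda_2)$. The entire content of the theorem is therefore the \emph{asymptotic independence} of the two coordinates, which I will establish by proving joint convergence in moments to a pair of independent surrogate sums.

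Second, I set up the truncation. For each $i \in \{1,2\}$, let $\cA_\varepsilon(H_n^{(i)})$ be the set of ``good'' hyperedges of layer $i$ (as in \eqref{eq:AepsilonHn}, with $r$ replaced by $r_i$), and decompose $T(H_n^{(i)}) = T_\varepsilon^+(H_n^{(i)}) + T_\varepsilon^-(H_n^{(i)})$ into main and remainder terms. By Lemma~\ref{lm:remainderH} applied to each layer, the remainder terms vanish in $L_1$ for every fixed $\varepsilon > 0$, so it suffices to analyze the joint law of $(T_\varepsilon^+(H_n^{(1)}), T_\varepsilon^+(H_n^{(2)}))$. I then introduce, as in \eqref{eq:Jn}, independent surrogates $J_{\bm e} \sim \Ber(c_n^{-(r_i-1)})$ for $\bm e \in \cA_\varepsilon(H_n^{(i)})$, and the corresponding sums $J_\varepsilon^+(H_n^{(i)})$. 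Crucially, the surrogates attached to layer $1$ and to layer $2$ are mutually independent by construction, so $(J_\varepsilon^+(H_n^{(1)}), J_\varepsilon^+(H_n^{(2)}))$ converges to a pair of \emph{independent} Poissons. Thus the theorem reduces to a joint moment-comparison statement: for all integers $k, \bar k \ge 0$,
$$\left| \bE\!\left[ T_\varepsilon^+(H_n^{(1)})^{k}\, T_\varepsilon^+(H_n^{(2)})^{\bar k} \right] - \bE\!\left[ J_\varepsilon^+(H_n^{(1)})^{k}\, J_\varepsilon^+(H_n^{(2)})^{\bar k} \right] \right| \to 0,$$
as $n \to \infty$ followed by $\varepsilon \to 0$.

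Third, I expand both moments multinomially over ordered tuples of hyperedges --- now mixing $b$ distinct layer-$1$ edges (size $r_1$) and $\bar b$ distinct layer-$2$ edges (size $r_2$) --- grouped by the isomorphism type of the union pattern $\cP(S)$. As in Lemma~\ref{lm:momentH}, each true term contributes $c_n^{-(|V(\cP(S))| - \nu(\cP(S)))}$ while the surrogate contributes $c_n^{-(b(r_1-1) + \bar b(r_2-1))}$, and a connecting-order computation gives $|V(\cP(S))| - \nu(\cP(S)) \le b(r_1-1)+\bar b(r_2-1)$, with equality (hence cancellation) precisely for the ``tight'' patterns in which every edge meets the union of the preceding ones in a single vertex. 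The remaining work is a counting lemma, analogous to Lemma~\ref{lm:countS}, showing that every non-tight mixed pattern $F$ has tuple count $\lesssim_{r_1,r_2,k,\bar k} \varepsilon\, c_n^{|V(F)| - \nu(F)}$, so its contribution is $O(\varepsilon)$ and vanishes in the iterated limit.

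The main obstacle is the one configuration with no analogue in the equal-uniformity case: a layer-$1$ edge whose $r_1$ vertices lie entirely inside a layer-$2$ edge. In a naive edge-by-edge count such a fully absorbed edge contributes only an $O(1)$ factor with no gain of $\varepsilon$, and already a single containing pair produces an $O(1)$ --- not $o(1)$ --- cross term, mirroring the fact that a monochromatic layer-$2$ edge forces every layer-$1$ edge inside it to be monochromatic. The resolution, carried out inside the counting lemma, is to order the edges of each component so that the small layer-$1$ edge is placed \emph{before} its containing layer-$2$ edge: the $r_1$-edge is chosen among the $O(c_n^{r_1-1})$ layer-$1$ edges, and then the containing $r_2$-edge is chosen using the truncation bound $M_{r_1}(\bm s, H_n^{(2)}) \le \varepsilon\, c_n^{r_2 - r_1}$, which is valid since $r_1 \in [2, r_2-1]$ and the edge lies in $\cA_\varepsilon(H_n^{(2)})$. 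Because $r_1 < r_2$, the layer-$1$ edges are of strictly lower order ($c_n^{r_1-1} = o(c_n^{r_2-1})$), and this ordering converts the problematic ``absorbed'' step into a partial-overlap step for the larger edge, extracting exactly the factor of $\varepsilon$ needed to make the containment contribution negligible. Combining the vanishing remainders with this joint moment comparison yields the independent Poisson limit.
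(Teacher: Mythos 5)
Your proposal is correct and follows essentially the same route as the paper: per-layer truncation via $\cA_{\varepsilon}(H_n^{(i)})$, $L_1$-negligible remainders by Lemma~\ref{lm:remainderH}, independent Bernoulli surrogates across the two layers, and a joint moment comparison over mixed tuples grouped by the isomorphism type of the union pattern, with non-tight patterns contributing $O(\varepsilon)$. Your explicit treatment of the one genuinely new configuration --- an $r_1$-edge fully absorbed in an $r_2$-edge, handled by placing the small edge first and then invoking $M_{r_1}(\bm s, H_n^{(2)}) \le \varepsilon c_n^{r_2-r_1}$ (legitimate since $r_1 \in [2, r_2-1]$) --- correctly supplies the key detail of the counting lemma whose proof the paper omits.
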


Note that by arguments as in \eqref{eq:varianceTHn} it follows that the conditions of Theorem \ref{thm:r1r2} imply the conditions of Theorem \ref{thm:jointr1r2}. Hence, the result in Theorem \ref{thm:r1r2} follows from Theorem \ref{thm:jointr1r2}.

\subsection{Proof of Theorem \ref{thm:jointr1r2}} 
\label{sec:jointr1r2pf}

We will present the proof for $r_1 \geq 3$. For $r_1=2$, the proof needs to be modified as in Section \ref{sec:THnKgraphpf}.

Fix $3 \leq r_1 < r_2$, $t_1 \in [2, r_1-1]$, $t_2 \in [2, r_2-1]$,  and define 
$M_{t_1}(\bm s, H_n^{(1)})$ and  $M_{t_2}(\bm s, H_n^{(2)})$ as in \eqref{eq:MsHn} (with $H_n$ replaced by $H_n^{(1)}$ and $H_n^{(2)}$, respectively). Also, for every fixed $\varepsilon > 0$, define $\cA_{\varepsilon}(H_n^{(1)}) $ and $\cA_{\varepsilon}(H_n^{(2)}) $ as in \eqref{eq:AepsilonHn}.  Then define the {\it main terms} as: 
$$T_{\varepsilon}^+(H_n^{(i)}) = \sum_{\bm e \in  \cA_{\varepsilon}(H_n^{(i)})}\bm 1\{X_{=\bm e}\}$$
and the {\it remainder terms} as 
        $$T_{\varepsilon}^-(H_n^{(i)}) = T(H_n^{(i)}) - T_{\varepsilon}^+(H_n^{(i)}),$$
for $i \in \{1, 2\}$. By Lemma \ref{lm:remainderH}, for each fixed $\varepsilon > 0$, $T_{\varepsilon}^-(H_n^{(i)}) \overset{L_1}{\to} 0$, as $n \to \infty$, for $i \in \{1,2\}$. Thus, it suffices to analyze the joint distribution of the main terms:  
$$\bm T_{n, \varepsilon}^+  := \begin{pmatrix}
    T_{\varepsilon}^+(H_n^{(1)})\\
    T_{\varepsilon}^+(H_n^{(2)})
\end{pmatrix}.$$ 
As before, we will approximate the variables $\{ \bm 1\{X_{=\bm e}\} : \bm e \in  \cA_{\varepsilon}(H_n)\}$ by independent Bernoulli random variables. In particular, define 
\begin{align*}
    J_{\varepsilon}^+(H^{(i)}) &= \sum_{\bm e \in \cA_{\varepsilon}(H_n^{{(i)}}) } J_{\bm e} ,  
\end{align*}   
where $\{J_{\bm e} : \bm e \in  \cA_{\varepsilon}(H_n^{(i)})\}$ is a collection of independent $\Ber(\frac{1}{c_n^{r_i-1}})$ random variables, which are also independent over $i \in \{1,2\}$. Clearly, 
$$\bm J_{n, \varepsilon}^+  := \begin{pmatrix}
    J_{\varepsilon}^+(H_n^{(1)})\\
    J_{\varepsilon}^+(H_n^{(2)})
\end{pmatrix} \dto \left(\begin{array}{l}
      Z_1\\
      Z_2
      \end{array}\right) = \bm Z, $$
    where $Z_1, Z_2$ are as defined in Theorem \ref{thm:jointr1r2}. 
The result in Theorem \ref{thm:jointr} then follows from the next lemma.

\begin{lem}\label{lm:momentH1r1H2r2}
For all integers $k_1, k_2 \ge 0$, 
    $$ |\bE[T_{\varepsilon}^+(H_n^{(1)})^{k_1} T_{\varepsilon}^+(H_n^{(2)})^{k_2}] - \bE[J_{\varepsilon}^+(H_n^{(1)})^{k_1} J_{\varepsilon}^+(H_n^{(2)})^{k_2}]| \rightarrow 0 , $$ 
    as $n \rightarrow \infty$ followed by $\varepsilon \rightarrow 0$. 
\end{lem}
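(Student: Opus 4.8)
The plan is to mirror the moment-comparison argument in the proof of Lemma \ref{lm:momentH}, adapting it to the situation where the two layers have different orders of uniformity. The only genuinely new feature is that the ``combined'' hypergraph appearing in the multinomial expansion is no longer uniform: it carries edges of size $r_1$ (from layer $1$) alongside edges of size $r_2$ (from layer $2$). Consequently the counting estimate of Lemma \ref{lm:countS} must be replaced by an analogous bound for such \emph{bi-uniform} hypergraphs.

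First I would expand both mixed moments by the multinomial theorem. Writing a generic term as a tuple $S = (\bm e_1, \ldots, \bm e_{k_1}, \bm f_1, \ldots, \bm f_{k_2})$ with $\bm e_i \in \cA_{\varepsilon}(H_n^{(1)})$ and $\bm f_j \in \cA_{\varepsilon}(H_n^{(2)})$, the expectation $\bE \prod_i \bm 1\{X_{=\bm e_i}\} \prod_j \bm 1\{X_{=\bm f_j}\}$ equals $c_n^{-(|V(\cP(S))| - \nu(\cP(S)))}$, where $\cP(S)$ is the union of all the selected edges (two edges being adjacent whenever they share a vertex, possibly across layers). On the surrogate side, the cross-layer independence of the $J_{\bm e}$'s makes the moment factorize, so the corresponding weight is $c_n^{-(b_1(r_1-1)+b_2(r_2-1))}$, where $b_1, b_2$ count the distinct layer-$1$ and layer-$2$ edges in $S$. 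Building the connected components of $\cP(S)$ edge by edge gives $|V(\cP(S))| - \nu(\cP(S)) \le b_1(r_1-1)+b_2(r_2-1)$, with equality precisely when every non-initial edge in a connected order meets the union of the earlier edges in a single vertex; in this ``hyperforest'' case the two weights coincide and the term cancels. Hence only tuples with strict inequality survive in the difference.

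The heart of the argument is the bi-uniform counting estimate. Grouping tuples by the isomorphism class $F$ of $\cP(S)$ (a hypergraph whose edges are now labelled by their layer and sized accordingly), I would show that whenever $|V(F)| - \nu(F) < b_1(r_1-1)+b_2(r_2-1)$ the number of tuples realizing $F$ is $\lesssim_{r_1,r_2,k_1,k_2,\lambda} \varepsilon\, c_n^{|V(F)|-\nu(F)}$. This is obtained exactly as in Lemma \ref{lm:countS}: fix a connected order and count placements one edge at a time. A layer-$\ell$ edge meeting the earlier union in $t$ vertices can be chosen in $\lesssim c_n^{r_\ell-1}$ ways when $t=1$ (bounding crudely by $|E(H_n^{(\ell)})| \lesssim c_n^{r_\ell-1}$), in $\lesssim \varepsilon\, c_n^{r_\ell - t}$ ways when $2 \le t \le r_\ell - 1$, and in $O(1)$ ways when $t = r_\ell$ (the fully-contained case, which includes a short layer-$1$ edge swallowed by a long layer-$2$ edge). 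The crucial point is that the truncation bound in the middle range uses membership of \emph{the edge being placed} in $\cA_{\varepsilon}(H_n^{(\ell)})$: since the shared set $\bm s$ is a $t$-subset of that edge, the defining inequality of $\cA_{\varepsilon}(H_n^{(\ell)})$ forces $M_t(\bm s, H_n^{(\ell)}) \le \varepsilon\, c_n^{r_\ell - t}$, and this holds regardless of whether the earlier vertices of $\bm s$ came from layer $1$ or layer $2$. Because the per-edge exponents telescope to $|V(F)| - \nu(F)$, and the deficit $b_1(r_1-1)+b_2(r_2-1) - (|V(F)|-\nu(F))$ equals $\sum_{\mathrm{non\text{-}initial}}(t_i - 1)$ for every connected order, strict inequality forces at least one placement with $t \ge 2$, collecting a factor of $\varepsilon$.

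Combining these pieces, the difference of the two moments is bounded by a finite sum, over the $O_{r_1,r_2,k_1,k_2}(1)$ isomorphism classes $F$ with strict inequality, of terms each $\lesssim \varepsilon$, and therefore tends to $0$ as $n \to \infty$ followed by $\varepsilon \to 0$. The main obstacle is the bi-uniform counting lemma, and within it the verification that the $\cA_{\varepsilon}$-truncation of the appropriate layer still controls the cross-layer intersections; once this is in place the remainder is a routine adaptation of Lemma \ref{lm:momentH}. (When $r_1 = 2$ the set $\cA_{\varepsilon}(H_n^{(1)})$ is undefined, so layer $1$ must instead be handled by the cycle-counting argument of Section \ref{sec:THnKgraphpf}, while layer $2$, of uniformity $r_2 \ge 3$, is treated as above.)
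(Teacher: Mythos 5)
Your proposal follows the same route as the paper: multinomial expansion, grouping tuples by the isomorphism class of the union hypergraph $\cP(S)$, cancellation of the ``hyperforest'' terms where $|V(F)|-\nu(F)=b_1(r_1-1)+b_2(r_2-1)$, and a bi-uniform analogue of Lemma \ref{lm:countS} supplying a factor of $\varepsilon$ for every class with strict inequality. The paper states that counting lemma with proof omitted (``similar to Lemma \ref{lm:countS}''), so your writeup is if anything more explicit, and your observation that the $\cA_{\varepsilon}$-truncation of the layer being placed controls cross-layer intersections is exactly the point that makes the adaptation work.

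One step, as you have written it, does not quite close. You correctly classify the placement of an edge meeting the earlier union in $t$ vertices into three cases, with the case $t=r_\ell$ (full containment, including a layer-$1$ edge swallowed by a layer-$2$ edge) contributing only $O(1)$ choices and \emph{no} factor of $\varepsilon$; but you then conclude that ``strict inequality forces at least one placement with $t\ge 2$, collecting a factor of $\varepsilon$.'' A positive deficit only guarantees some $t_i\ge 2$, and if the unique such placement is a full containment you collect nothing. This is precisely what the ordering lemma (Lemma \ref{lm:connected}) is for in the single-layer proof, and the bi-uniform version has one genuinely new case: a layer-$1$ edge $\bm e$ contained in a single layer-$2$ edge $\bm f$ and meeting nothing else. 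The fix is to reorder so that $\bm e$ comes first; then $\bm f$ meets the earlier union in exactly $r_1\in[2,r_2-1]$ vertices, and membership of $\bm f$ in $\cA_{\varepsilon}(H_n^{(2)})$ yields at most $\varepsilon c_n^{r_2-r_1}$ choices, giving the required $\varepsilon c_n^{|V(F)|-\nu(F)}$ bound. You should either state and prove this bi-uniform ordering lemma or at least note that the reordering in Case 2 of Lemma \ref{lm:connected} extends to cross-layer containments in this way; once that is in place the rest of your argument is sound.
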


\subsubsection{Proof of Lemma \ref{lm:momentH1r1H2r2} } 
\label{sec:momentH1r1H2r2pf}

Fix integers $0 \leq b_1 \le k_1$ and $0 \leq b_2 \leq k_2$ and denote by 
\begin{align} 
\sS_{\varepsilon, k_1, k_2, b_1, b_2} = \Big\{ ( (\bm e_1^{(1)}, \bm e_2^{(1)}, \ldots, \bm e_{k_1}^{(1)}), & (\bm e_1^{(2)}, \bm e_2^{(2)} \ldots, \bm e_{k_2}^{(2)}) ) :  \bm e_j^{(i)} \in \cA_{\varepsilon}(H_n^{(i)}), \text{ for }  1 \leq j \leq k_i ,  \nonumber \\ 
& \text{ and } |\{\bm e_j^{(i)}: 1 \leq j \leq k_i\}| = b_i, \text{ for } i \in \{1, 2\} \Big\} . \nonumber 
\end{align}  
Given a pair $(S_1, S_2) \in \sS_{\varepsilon, k_1, k_2, b_1, b_2}$, with $S_1 = (\bm e_1^{(1)}, \bm e_2^{(1)}, \ldots, \bm e_{k_1}^{(1)})$ and $S_2 = (\bm e_1^{(2)}, \bm e_2^{(2)}, \ldots, \bm e_{k_2}^{(2)})$,  denote the hypergraph formed by the union of the edges $$\left\{\bm e_1^{(1)}, \bm e_2^{(1)}, \ldots, \bm e_{k_1}^{(1)}, \bm e_1^{(2)}, \bm e_2^{(2)}, \ldots, \bm e_{k_2}^{(2)}\right\}$$ by $\cP(S_1, S_2)$. (Note that $\cP(S_1, S_2)$ has $b_1+b_2$ distinct hyperedges.) Moreover, given a hypergraph $F$ define
        $$\sS_{\varepsilon, k_1, k_2, b_1, b_2}(F) := \{ (S_1, S_2) \in \sS_{\varepsilon, k_1, k_2, b_1, b_2}: \cP(S_1, S_2) \text{ is isomorphic to }  F \} .$$ 
        Note that if $F$ is connected and $\sS_{\varepsilon, k_1, k_2, b_1, b_2}(F)$ is non-empty then $|V(F)| \leq r_1b_1-b_1 + r_2 b_2 - b_2 +1$. Hence, if $F$ has $\nu(F)$ connected components and $\sS_{\varepsilon, k_1, k_2, b_1, b_2}(F)$ is non-empty, 
$|V(F)| -\nu(F) \leq r_1b_1 - b_1 + r_2 b_2 - b_2 $. The following lemma gives a bound on $\sS_{\varepsilon, k_1, k_2, b_1, b_2}(F)$. The proof is similar to Lemma \ref{lm:countS}. The details are omitted.

\begin{lem} 
Let $F= (V(F), E(F))$ be a hypergraph with $\nu(F)$ connected components such that $\sS_{\varepsilon, k, b}(F)$ is non-empty.  Then, $|\sS_{\varepsilon, k_1, k_2, b_1, b_2}(F)| \lesssim_{r_1, r_2, b_1, b_2, \lambda_1, \lambda_2} c_n^{|V(F)| - \nu(F)}$.  
  Moreover, if $|V(F)| < r_1b_1-b_1+r_2b_2-b_2+1$, then
    $$|\sS_{\varepsilon, k_1, k_2, b_1, b_2}(F)| \lesssim_{r_1, r_2, b_1, b_2, \lambda_1, \lambda_2} \varepsilon c_n^{|V(F)| - \nu(F)} . $$
\end{lem}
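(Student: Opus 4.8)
The plan is to mirror the proof of Lemma~\ref{lm:countS}, the only genuinely new ingredient being the bookkeeping of two hyperedge sizes $r_1$ and $r_2$. First I would observe that the $c_n$-dependence of $|\sS_{\varepsilon, k_1, k_2, b_1, b_2}(F)|$ comes entirely from the choice of the $b_1 + b_2$ \emph{distinct} hyperedges whose union is isomorphic to $F$: once these are fixed, the number of ways to arrange them into a pair of ordered tuples $(S_1, S_2)$ with the prescribed multiplicities $(k_1, b_1)$ and $(k_2, b_2)$ is a combinatorial constant depending only on $k_1, k_2, b_1, b_2$. Hence it suffices to bound the number of labelled realizations of $F$ as a union of $b_1$ distinct edges from $\cA_{\varepsilon}(H_n^{(1)})$ and $b_2$ distinct edges from $\cA_{\varepsilon}(H_n^{(2)})$; write $\sS(F)$ for $\sS_{\varepsilon, k_1, k_2, b_1, b_2}(F)$ in what follows.

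Next I would reduce to the connected case. If $F$ has connected components $F_1, \dots, F_{\nu(F)}$, then each component is itself a union of some of the $r_1$- and $r_2$-edges, and the count factorizes over components up to a constant. Since the per-component bound $c_n^{|V(F_\ell)|-1}$ multiplies to $c_n^{|V(F)|-\nu(F)}$, it is enough to treat a single connected $F$ and show $|\sS(F)| \lesssim_{r_1,r_2,b_1,b_2,\lambda_1,\lambda_2} c_n^{|V(F)|-1}$, with an extra factor of $\varepsilon$ whenever $|V(F)| < r_1 b_1 - b_1 + r_2 b_2 - b_2 + 1$, i.e.\ whenever the union falls short of the tree-like maximal vertex count $r_1 b_1 - b_1 + r_2 b_2 - b_2 + 1$ (attained exactly when consecutive edges overlap in a single vertex).

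For connected $F$, I would invoke the mixed-uniformity analogue of Lemma~\ref{lm:connected} to order the $b_1 + b_2$ distinct hyperedges as $\bm e_1, \dots, \bm e_{b_1+b_2}$ so that each $\bm e_j$ (for $j \geq 2$) meets the union of its predecessors in $t_j := |\bm e_j \cap (\bigcup_{l<j}\bm e_l)| \geq 1$ vertices; moreover, when the total vertex count is strictly below the maximum, the ordering can be arranged so that $t_j \geq 2$ for at least one $j$. Building $F$ greedily along this order, an $r_i$-edge attaching with overlap $t_j$ can be chosen in $\lesssim_{r_i,\lambda_i} c_n^{r_i-1}$ ways when $t_j = 1$ (bounding by the total edge count $|E(H_n^{(i)})| \lesssim c_n^{r_i - 1}$), in $O_{r_i}(\varepsilon c_n^{r_i - t_j})$ ways when $2 \le t_j \le r_i - 1$ (by the defining property of $\cA_{\varepsilon}(H_n^{(i)})$ from \eqref{eq:AepsilonHn}, namely $|M_{t_j}(\bm s, H_n^{(i)})| \le \varepsilon c_n^{r_i - t_j}$), and in $O_{r_i}(1)$ ways when $t_j = r_i$. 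Since $r_i - t_j$ is precisely the number of new vertices introduced by $\bm e_j$, the exponents telescope to $|V(F)| - 1$, and every attachment with $t_j \geq 2$ contributes a factor of $\varepsilon$, yielding both claimed bounds.

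The main obstacle is the ordering step: establishing that a connected union of $r_1$- and $r_2$-edges with sub-maximal vertex count admits an ordering in which some edge attaches along at least two vertices. This is exactly the two-size version of Lemma~\ref{lm:connected}, and since the argument there is a spanning-structure / connectivity argument that is insensitive to the individual edge sizes, the adaptation is routine; everything else is the same telescoping vertex count as in Lemma~\ref{lm:countS}, which is why the details can safely be omitted.
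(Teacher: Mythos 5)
Your proposal is correct and is essentially the argument the paper intends (the paper omits the details, deferring to the proof of Lemma \ref{lm:countS}, which you faithfully adapt: reduce to connected components, order the edges via the analogue of Lemma \ref{lm:connected}, and telescope the vertex counts using the $\cA_{\varepsilon}$ truncation). The only point worth making explicit in the ordering step is that one needs some edge $\bm e_j$ of size $r_{(j)}$ to attach with $t_j \in [2, r_{(j)}-1]$ (not merely $t_j \geq 2$, since $t_j = r_{(j)}$ yields no $\varepsilon$); this is where the two sizes mildly interact — an $r_1$-edge fully contained in an $r_2$-edge must be placed \emph{before} it so the containment becomes an overlap of size $r_1 \in [2, r_2-1]$ seen from the larger edge — but the Case-2 reordering in Lemma \ref{lm:connected} handles this exactly as you indicate, and your counting step already distinguishes the cases correctly.
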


We can now apply the above lemma to complete the proof of Lemma \ref{lm:momentH1r1H2r2}. To begin with, let $\cH_{r_1, r_2, k_1, k_2}$ be the collection of hypergraphs $F = (V(F), E(F))$ that can be expressed as $F = F_1 \cup F_2$, with $V(F) = V(F_1) \cup V(F_2)$ and $E(F) = E(F_1) \cup E(F_2)$, such that $F_1$ is $r_1$-uniform and has at most $k_1$ hyperedges and $F_2$ is $r_2$-uniform and has at most $k_2$ hyperedges. Then by the multinomial expansion, we have 
\begin{equation*}
    \begin{split}
    &|\bE [ T_{\varepsilon}^+(H_n^{(1)})^{k_1} T_{\varepsilon}^+(H_n^{(2)})^{k_2} ] - \bE [ J_{\varepsilon}^+(H_n^{(1)})^{k_1} J_{\varepsilon}^+(H_n^{(2)})^{k_2} ] |\\
    &\le \sum_{b_1 = 1}^{k_1}\sum_{b_2 = 1}^{k_2}\sum_{(S_1, S_2) \in \sS_{\varepsilon, k_1, k_2, b_1, b_2}} \left|\bE \left[ \prod_{t=1}^{k_1}  \bm 1\{X_{= \bm e_t^{(1)}}\} \prod_{t=1}^{k_2}  \bm 1\{X_{= \bm e_t^{(2)}}\} \right]  - \bE \left[ \prod_{t=1}^{k_1}  J_{\bm e_t^{(1)} } \prod_{t=1}^{k_2}  J_{\bm e_t^{(2)} } \right] \right|\\
    &= \sum_{b_1 = 1}^{k_1}\sum_{b_2 = 1}^{k_2}\sum_{(S_1, S_2) \in \sS_{\varepsilon, k_1, k_2, b_1, b_2}} \left |\frac{1}{c_n^{|V(\cP(S_1, S_2))| - |\nu(\cP(S_1, S_2))|}} - \frac{1}{c_n^{r_1b_1 - b_1 + r_2b_2 - b_2}}\right |\\
    &\lesssim_{r_1, r_2, k_1,k_2} \sum_{b_1 = 1}^{k_1}\sum_{b_2 = 1}^{k_2} \sum_{F \in \cH_{r_1, r_2, k_1, k_2}}\left |\frac{1}{c_n^{|V(F)| - |\nu(F)|}} - \frac{1}{c_n^{r_1b_1 - b_1 + r_2b_2 - b_2}}\right |\cdot |\sS_{\varepsilon, k_1, k_2, b_1, b_2}(F)| \\ 
    & \lesssim_{r_1, r_2, k_1,k_2} \sum_{b_1 = 1}^{k_1}\sum_{b_2 = 1}^{k_2} \sum_{F \in \cH_{r_1, r_2, k_1, k_2}} \varepsilon \cdot \bm 1\{|V(F)| - |\nu(F)| < r_1 b_1 - b_1 + r_2 b_2 - b_2 +1 \} . 
    \end{split}
    \end{equation*}
    which goes to 0 as $n \rightarrow \infty$ followed by $\varepsilon \rightarrow 0$. This completes the proof of Lemma \ref{lm:momentH}. 
\hfill $\Box$

\begin{remark}
It might be a bit puzzling at first glance to spot the differences between the proofs of Theorem \ref{thm:jointr1r2} (where $r_1 < r_2$) and  Theorem \ref{thm:jointr} (where $r_1 = r_2 = r$). 
The subtlety lies in the asymptotic behavior $|E(H_n^{(1, 2)})|$, number of hyperedges in intersection of $H_n^{(1)}$ and $H_n^{(2)}$. Note that when $r_1 = r_2 = r$, then $|E(H_n^{(1, 2)})| = \Theta(c_n^{r-1})$ (if $\lambda_{12} > 0$ in \eqref{eq:EH1H2r}). Hence, when $r_1 = r_2 = r$, the monochromatic hyperedges from $H^{(1, 2)}$ can have a non-trivial contribution to the limiting distribution. On the other hand, when $r_1 < r_2$, then the hypergraph $H^{(1, 2)}$ is empty. Also, the number of pairs of edges in $E(H_n^{(1)}) \cup E(H_n^{(2)})$ which share at least $r_1$ vertices is at most 
  $$|\cK(r_1, H_n^{(1)})| + |\cK(r_1, H_n^{(2)})| + |\cK(r_1, H_n^{(1)}, H_n^{(2)})| . $$
  Note that $|\cK(r_1, H_n^{(1)})| = 0$ and $|\cK(r_1, H_n^{(2)})| = o(c_n^{2r_2-r_1-1})$ (recall \eqref{eq:KH1H2hyperedger1r2}). Also, since each hyperedge in $H_n^{(1)}$ has exactly $r_1$ vertices, by the same argument as in \eqref{eq:KH1H2joint},
  $$|\cK(r_1, H_n^{(1)}, H_n^{(2)})| = o(c_n^{2r_2-r_1-1}) . $$ 
  Hence, when $r_1 < r_2$ the number of monochromatic hyperedges in $H_n^{(1)}$ that are completely contained in some monochromatic hyperedge in $H_n^{(2)}$ have a negligible contribution in the limit. As a result, the joint distribution of $T(H_n^{(1)})$ and $T(H_n^{(2)})$ are asymptotically independent, when $r_1 < r_2$. 
\end{remark}

\section{Proof of Proposition \ref{ppn:dmultiplegraphs}}
\label{sec:layersjointpf}

We begin with the following proposition, which follows by a straightforward extension of the proof of Theorem \ref{thm:jointr1r2} from the bivariate case to the multivariate case.

\begin{prop}\label{ppn:dhypergraphs}
    Fix $d \geq 1$ and integers $2 \leq r_1 \leq r_2 \cdots \leq r_d$. Let $\bm H_n = (H_n^{(1)}, \ldots, H_n^{(d)})$ be a sequence of $d$-hypermultiplexes, where $H_n^{(i)}$ is $r_i$-uniform, for $1 \leq i \leq d$. Suppose the vertices of $\bm{H}_n$ are colored uniformly at random with $c_n$ colors such that the following hold:  
    
\begin{enumerate}

        \item For every $1 \leq i \leq d$, $\frac{|E(H_n^{(i)})|}{c_n^{r_i-1}} \to \lambda_i$,  
        for some constant $\lambda_i \geq 0$.

        \item  For all $1 \leq i \leq d$ and $t \in [2, r_i - 1]$, $\frac{|\cK(t, H_n^{(i)})|}{c_n^{2r_i-t-1}} \to 0$.

\item For $1 \leq i \ne j \leq d$, $E(H_n^{(i)}) \cap E(H_n^{(j)}) = \emptyset$. 
    \end{enumerate}
    Then, as $n \rightarrow \infty$, 
    $$ \bm T(\bm H_n) \dto \bm Z, $$
            where $\bm T(\bm H_n) $ and $\bm Z$ are as defined in Proposition \ref{ppn:dmultiplegraphs}.  \hfill $\Box$
\end{prop}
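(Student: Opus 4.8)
The plan is to mimic the proof of Theorem~\ref{thm:jointr1r2}, replacing the two layers by $d$ layers throughout. First, for each $1 \le i \le d$ and each $\varepsilon > 0$, I would form the truncation set $\cA_\varepsilon(H_n^{(i)})$ as in \eqref{eq:AepsilonHn} and split $T(H_n^{(i)}) = T_\varepsilon^+(H_n^{(i)}) + T_\varepsilon^-(H_n^{(i)})$, where $T_\varepsilon^+(H_n^{(i)}) = \sum_{\bm e \in \cA_\varepsilon(H_n^{(i)})} \bm 1\{X_{=\bm e}\}$ collects the ``nondegenerate'' hyperedges. (For any layer with $r_i = 2$ the truncation is vacuous, and that coordinate is instead handled exactly as in the graph case of Section~\ref{sec:THnKgraphpf}; I describe the argument assuming $r_i \ge 3$ for all $i$.) Condition (2) and a layerwise application of Lemma~\ref{lm:remainderH} give $T_\varepsilon^-(H_n^{(i)}) \overset{L_1}{\to} 0$ for each fixed $\varepsilon$, so it will suffice to analyze the vector of main terms $\bm T_{n,\varepsilon}^+ = (T_\varepsilon^+(H_n^{(1)}), \ldots, T_\varepsilon^+(H_n^{(d)}))$. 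I would then compare it to the surrogate $\bm J_{n,\varepsilon}^+ = (J_\varepsilon^+(H_n^{(1)}), \ldots, J_\varepsilon^+(H_n^{(d)}))$ obtained by replacing the indicators with independent $\Ber(1/c_n^{r_i-1})$ variables $J_{\bm e}$, taken independently across layers. Because $\bE J_\varepsilon^+(H_n^{(i)}) = |\cA_\varepsilon(H_n^{(i)})|/c_n^{r_i-1} \to \lambda_i$ by condition (1) and \eqref{eq:Hnremainder}, and the coordinates of $\bm J_{n,\varepsilon}^+$ are independent sums of independent Bernoullis, $\bm J_{n,\varepsilon}^+ \dto \bm Z$ with independent Poisson marginals.

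The crux will be the joint moment comparison: I would show that for all integers $k_1, \ldots, k_d \ge 0$,
$$\left| \bE \prod_{i=1}^d T_\varepsilon^+(H_n^{(i)})^{k_i} - \bE \prod_{i=1}^d J_\varepsilon^+(H_n^{(i)})^{k_i} \right| \to 0$$
as $n \to \infty$ followed by $\varepsilon \to 0$; the conclusion then follows since a vector of independent Poissons is determined by its joint moments. Expanding multinomially over ordered $k_i$-tuples of edges from $\cA_\varepsilon(H_n^{(i)})$ and grouping the tuples $(S_1, \ldots, S_d)$ by the isomorphism type of the merged hypergraph $F = \cP(S_1, \ldots, S_d)$ (carrying $b_i$ distinct edges of layer $i$), the true and surrogate moments become sums of $c_n^{-(|V(F)| - \nu(F))}$ and $c_n^{-\sum_i b_i(r_i - 1)}$, respectively. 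Since $|V(F)| - \nu(F) \le \sum_i b_i(r_i - 1)$ for any merged hypergraph, with equality precisely for hyperforests, these two terms cancel unless $F$ is not a hyperforest; hence the whole difference is controlled by $\sum_F |\sS_\varepsilon(F)|$ over non-hyperforest types $F$.

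The hard part will be the counting bound $|\sS_\varepsilon(F)| \lesssim \varepsilon\, c_n^{|V(F)| - \nu(F)}$ for such $F$, which generalizes Lemma~\ref{lm:countS} and the counting lemma of Section~\ref{sec:momentH1r1H2r2pf}. Building $F$ up in a connected edge order (via Lemma~\ref{lm:connected} on each component), an edge of layer $i$ attaching through $t = 1$ vertex contributes $\lesssim c_n^{r_i - 1}$ choices, through $t \in [2, r_i - 1]$ vertices contributes $\lesssim \varepsilon\, c_n^{r_i - t}$ choices (by the definition of $\cA_\varepsilon(H_n^{(i)})$), and through $t = r_i$ vertices contributes only $O(1)$ choices; the accumulated power of $c_n$ is always exactly $|V(F)| - \nu(F)$, so I only need to exhibit one attachment with $t \in [2, r_i - 1]$ to gain the decisive factor of $\varepsilon$. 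The delicate point is controlling cross-layer overlaps so that such an attachment always exists. When two distinct edges of different sizes are nested, ordering the smaller (size $r_i$) before the larger (size $r_j$) forces the larger to attach through $t = r_i \in [2, r_j - 1]$ vertices, recovering the $\varepsilon$ — this is the containment estimate already used in the remark following Theorem~\ref{thm:jointr1r2}. When two distinct edges have equal size $r_i = r_j$, condition (3) ensures they differ as vertex sets, so any nontrivial overlap lies in $[2, r_i - 1]$ and again yields the $\varepsilon$; this is exactly where the disjointness hypothesis \eqref{eq:EHncommon} is indispensable, since otherwise a hyperedge common to two layers would produce a full $t = r_i$ overlap repeated $\Theta(c_n^{r_i - 1})$ times with no $\varepsilon$, leading to the genuine dependence seen in Theorem~\ref{thm:jointr}. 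As there are only $O_{r_1, \ldots, r_d, k_1, \ldots, k_d}(1)$ isomorphism types, summing these bounds gives a total difference of $O(\varepsilon)$, which completes the proof.
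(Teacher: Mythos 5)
Your proposal is correct and follows essentially the same route as the paper, which proves this proposition simply by declaring it a ``straightforward extension'' of the proof of Theorem~\ref{thm:jointr1r2} (truncation via $\cA_\varepsilon$, independent Bernoulli surrogates, and joint moment comparison over isomorphism types of the merged hypergraph). In fact you supply more detail than the paper does --- in particular, your identification of exactly where the edge-disjointness hypothesis (3) is needed (to rule out full $t=r_i$ overlaps between equal-uniformity layers, which would otherwise create a nonvanishing covariance that the independent surrogate cannot match) is the key point the paper leaves implicit.
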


Proposition \ref{ppn:dmultiplegraphs} is the consequence of the previous result. To see this note that $\mathrm{Var}(H_n^{(i)}) \to \lambda_i$, for $1 \leq i \leq d$ imply conditions (1) and (2) in Proposition \ref{ppn:dhypergraphs} (recall the discussion after \eqref{eq:varianceTHn}). Proposition \ref{ppn:dmultiplegraphs} hence follows from Proposition \ref{ppn:dhypergraphs}.

\section{Proofs for Section \ref{sec:applications} } 
\label{sec:applicationspf}

In this section we collect the proofs of the results from Section \ref{sec:applications}. The section is organized as follows: We prove Corollary \ref{cor:jointedge}, Corollary \ref{cor:apn}, Corollary \ref{cor:randomhypergraph}, and Corollary \ref{cor:WHnlambda} in 
Section \ref{sec:jointedgepf}, Section \ref{sec:apnpf}, Section \ref{sec:randomhypergraphpf}, and Section \ref{sec:WHnlambdapf}, respectively.

\subsection{Proof of Corollary \ref{cor:jointedge} }
\label{sec:jointedgepf}

Let $F_1, F_2, \ldots, F_d$ be a collection of finite graphs as in  Corollary \ref{cor:jointedge}. Note that $R(F_i, G_n) = T_n(H_{G_n}(F_i))$, where $H_{G_n}(F_i)$ is the hypergraph defined in Example \ref{example:edgecoloring}. Note that $H_{G_n}(F_i)$ is a $|E(F_i)|$-uniform hypergraph. Hence, for $1 \leq i < j \leq d$, if $|E(F_i)| \ne |E(F_j)|$, then $$|E(H_{G_n}(F_i)) \cap E(H_{G_n}(F_j))| = \emptyset.$$ Moreover, when $|E(F_i)| = |E(F_j)|$, then we also have $$|E(H_{G_n}(F_i)) \cap E(H_{G_n}(F_j)) | = \emptyset , $$ since $F_i$ and $F_j$ being non-isomorphic means that the edge sets of $F_i$ are $F_j$ are different. This shows that the hyperedge sets of $T_n(H_{G_n}(F_1)), T_n(H_{G_n}(F_2)), \ldots, T_n(H_{G_n}(F_d))$ are mutually disjoint. Hence,  Proposition \ref{ppn:dmultiplegraphs} implies the result in Corollary \ref{cor:jointedge}. \hfill $\Box$

\subsection{Proof of Corollary \ref{cor:apn} } 
\label{sec:apnpf}

We begin by computing the expectation of $W_r([n])$. A direct calculation shows that 
\begin{align}\label{eq:apnr}
N_r([n]) = (1+o(1))\frac{1}{r-1} n^2. 
\end{align} 
Hence, from \eqref{eq:Anr} recalling that $c_n = \lambda n^{\frac{2}{r-1}} $ we have 
$$\bE W_r([n]) = \frac{N_r([n])}{c_n^{r-1}} \rightarrow \frac{\lambda}{r-1} . $$

Next, we compute the variance of $W_r([n])$. 
Note that $W_r([n]) = T(H_{[n]}(r))$, where $H_{[n]}(r)$ is the $r$-uniform hypergraph defined in Example \ref{example:ap}. For this, from \eqref{eq:varianceTHn}, we have 
$$\Var T(H_{[n]}(r) = R_{1,n} + R_{2,n}, $$ 
where 
\begin{align*}
R_{1,n} = \frac{1}{c_n^{r-1}}\left(1 - \frac{1}{c_n^{r-1}}\right)|E(T(H_{[n]}(r))) | \text{ and } R_{2,n} = \sum_{t=2}^{r-1}\frac{1}{c_n^{2r-t-1}}\left(1 - \frac{1}{c_n^{t-1}}\right)|\cK(t, T(H_{[n]}(r)))| . 
\end{align*} 
Note that $|E(T(H_{[n]}(r))) | = N_r([n])$. Hence, \eqref{eq:apnr} implies, $R_{1,n} \rightarrow \frac{\lambda}{r-1}$.  For $R_{2, n}$ note that $|\cK(t, T(H_{[n]}(r))) |$ counts the number of pairs of $r$-APs that have $t$ elements in common, for $t \in [2, r-1]$. Hence, having chosen the first $r$-AP in $O(n^2)$ ways, there are only $O(1)$ ways to select the second $r$-AP with $t$ elements in common with the first, for $t \in [2, r-1]$. Therefore, $$\frac{|\cK(t, T(H_{[n]}(r)))|}{c_n^{2r-t-1}} = \left(\frac{n^2}{c_n^{2r-t-1}} \right) \rightarrow 0, $$
for $t \in [2, r-1]$. Hence, $R_{2, n} \rightarrow 0$, and $\Var T(H_{[n]}(r) \rightarrow \lambda$. Theorem \ref{thm:THn} then implies, $W_r([n]) \dto \Pois(\frac{\lambda}{r-1})$, completing the proof of Corollary \ref{cor:apn}.

\subsection{Proof of Corollary \ref{cor:randomhypergraph} }
\label{sec:randomhypergraphpf}

The result in Theorem \ref{thm:hypergraphr} can be
 extended to random hypergraphs by the following lemma when the limits in \eqref{eq:meanvariancer} hold in probability.

\begin{lem}\label{lm:randomhypergraph}
    Let $\bm H_n = (H_n^{(1)}, H_n^{(2)})$ be a sequence of random 2-hypermultiplexes, where both $H_n^{(1)}$ and $H_n^{(2)}$ are $r$-uniform, which is independent of the coloring distribution such that the following hold: 
    \begin{align}\label{eq:meanvarianceprobability} 
   \bE [ \bm T(\bm H_n) | \bm H_n ] \pto \begin{pmatrix}
  \lambda_1 \\ 
  \lambda_2
  \end{pmatrix} \quad \text{ and } \quad \mathrm{Var}[ \bm T(\bm H_n) | \bm H_n ] \pto \begin{pmatrix}
  \lambda_1 & \lambda_{1, 2} \\ 
  \lambda_{1, 2} & \lambda_2
  \end{pmatrix} , 
    \end{align}
   for constants $\lambda_1, \lambda_2, \lambda_{1,2} \geq 0$, with $\lambda_{1,2} \le \lambda_1$ and $\lambda_{1,2} \le \lambda_2$. 
Then
    $$ \bm T(\bm H_n) = \left(\begin{array}{l}
        T(H_n^{(1)}) \\
        T(H_n^{(2)})
        \end{array}\right) \dto \left(\begin{array}{l}
            Z_1 + Z_{1,2}\\
           Z_2 + Z_{1,2}
            \end{array}\right) : = \bm Z , $$
    where $Z_1 \sim \Pois(\lambda_1 - \lambda_{1,2}), Z_2 \sim \Pois(\lambda_2 - \lambda_{1,2}), Z_{1,2} \sim \Pois(\lambda_{1,2})$ are independent.
\end{lem}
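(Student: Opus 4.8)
The plan is to reduce the random-hypergraph statement to the deterministic Theorem \ref{thm:hypergraphr} by conditioning on $\bm H_n$ and invoking the subsequence characterization of convergence in distribution. The key structural fact is that, because $\bm H_n$ is independent of the coloring, conditional on the event $\{\bm H_n = H\}$ the vector $\bm T(\bm H_n)$ has exactly the distribution of the monochromatic counts of the fixed $2$-hypermultiplex $H$ under a fresh uniform $c_n$-coloring; moreover the conditional mean $\bE[\bm T(\bm H_n)\mid \bm H_n]$ and conditional covariance $\Var[\bm T(\bm H_n)\mid\bm H_n]$ agree with the deterministic mean and covariance that enter the hypotheses of Theorem \ref{thm:hypergraphr}. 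Thus \eqref{eq:meanvarianceprobability} says precisely that these deterministic quantities, viewed as random variables through $\bm H_n$, converge in probability to the limits required by Theorem \ref{thm:hypergraphr}.

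First I would work with characteristic functions. For a real vector $\bm t = (t_1,t_2)$ set $\varphi_n(\bm t) := \bE\, e^{\, i\,\bm t^{\top}\bm T(\bm H_n)}$ and let $\varphi_{\bm Z}$ denote the characteristic function of $\bm Z$. Conditioning on the hypergraph gives $\varphi_n(\bm t) = \bE\,[\,g_n(\bm t)\,]$, where $g_n(\bm t) := \bE\,[\,e^{\, i\,\bm t^{\top}\bm T(\bm H_n)}\mid \bm H_n\,]$ is the conditional characteristic function, a random variable bounded in modulus by $1$. Now fix an arbitrary subsequence. Using \eqref{eq:meanvarianceprobability} together with the fact that convergence in probability along any subsequence admits a further almost surely convergent subsequence, I would extract a subsequence $\{n_k\}$ along which $\bE[\bm T(\bm H_{n_k})\mid\bm H_{n_k}]$ and $\Var[\bm T(\bm H_{n_k})\mid\bm H_{n_k}]$ converge almost surely to the limits in \eqref{eq:meanvarianceprobability}. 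On the probability-one event where this holds, fix a realization $\omega$; then the deterministic sequence of $2$-hypermultiplexes $\{\bm H_{n_k}(\omega)\}_k$ satisfies the hypotheses of Theorem \ref{thm:hypergraphr}, so that theorem yields $g_{n_k}(\bm t)(\omega)\to\varphi_{\bm Z}(\bm t)$ for every $\bm t$ (using the L\'evy continuity theorem to rephrase the conclusion of Theorem \ref{thm:hypergraphr} as pointwise convergence of characteristic functions).

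Finally, since $g_{n_k}(\bm t)\to\varphi_{\bm Z}(\bm t)$ almost surely and $|g_{n_k}(\bm t)|\le 1$, the bounded convergence theorem gives $\varphi_{n_k}(\bm t)=\bE\,[g_{n_k}(\bm t)]\to\varphi_{\bm Z}(\bm t)$ for every $\bm t$, whence $\bm T(\bm H_{n_k})\dto\bm Z$ along $\{n_k\}$ by L\'evy's theorem. As the initial subsequence was arbitrary, the subsequence principle for convergence in distribution gives $\bm T(\bm H_n)\dto\bm Z$, as desired. The main obstacle, and the reason a direct continuous-mapping argument does not suffice, is that $g_n(\bm t)$ depends on the entire combinatorial structure of $\bm H_n$ rather than on its mean and covariance alone; the subsequence device is exactly what lets me transfer the deterministic conclusion of Theorem \ref{thm:hypergraphr} --- which does account for that full structure --- back to the random setting, with bounded convergence handling the integration over both the coloring and the hypergraph. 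Note that the constancy of the limits $\lambda_1,\lambda_2,\lambda_{1,2}$ guarantees that the limiting law $\bm Z$ does not depend on the realization $\omega$, which is what makes the subsequence principle applicable.
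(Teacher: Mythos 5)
Your proposal is correct, and at the top level it is the same reduction the paper uses: condition on $\bm H_n$, exploit the independence of the hypergraph and the coloring so that the conditional law of $\bm T(\bm H_n)$ is that of a deterministic $2$-hypermultiplex sequence whose (coloring) mean vector and covariance matrix satisfy the hypotheses of Theorem \ref{thm:hypergraphr}, and then integrate out the hypergraph randomness. Where you differ is in the device used to pass from convergence in probability of the conditional moments to a statement about deterministic sequences. You extract, from an arbitrary subsequence, a further subsequence along which the conditional moments converge almost surely, apply Theorem \ref{thm:hypergraphr} realization by realization, and conclude via bounded convergence of the conditional characteristic functions $g_n(\bm t)$ together with L\'evy's continuity theorem and the subsequence principle; the fact that the limiting law $\bm Z$ is the same for every realization is, as you note, what makes this work. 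The paper instead introduces a shrinking bad event $A_n$ (where some conditional moment deviates from its limit by more than $\varepsilon_n$, with $\bP(A_n)\to 0$), bounds $|\bE h(\bm T(\bm H_n))-\bE h(\bm Z)|$ by $\bP(A_n)$ plus a supremum over hypergraphs in $A_n^c$ of the conditional discrepancy for bounded test functions $h$, and shows that supremum vanishes by contradiction: a non-vanishing supremum would produce a deterministic sequence satisfying the moment hypotheses of Theorem \ref{thm:hypergraphr} but violating its conclusion. The two arguments are of essentially equal strength; yours avoids the contradiction step at the cost of invoking the a.s.-subsequence characterization of convergence in probability, while the paper's is more directly quantitative in spirit (it isolates the rate $\varepsilon_n$ and the bad-event probability explicitly). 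Either is a complete proof of the lemma.
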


\begin{proof}
    The assumption \eqref{eq:meanvarianceprobability} implies the existence of positive reals $\varepsilon_n \rightarrow 0$ such that $\lim_{n\rightarrow\infty}\bP(A_n)=0$, where 
\begin{align*}
    A_n :=\{ \bm{H}_n := (H_n^{(1)}, H_n^{(2)}): \max\{& |\bE[T(H_n^{(i)})\mid \bm{H}_n] - \lambda_{i} |,|\Var[T(H_n^{(i)})\mid \bm{H}_n] - \lambda_{i} |, \\
        & \quad |\Cov [ T(H_n^{(1)}), T(H_n^{(2)})\mid \bm{H}_n ] -\lambda_{1,2}|\} > \varepsilon_n \text{ for } i = 1,2\}.
\end{align*} 
Thus, given any function $h:\{\bZ_{+} \cup \{0 \}\}^2 \mapsto [0,1]$ 
\begin{align*}
|\bE h( \bm T(\bm{H}_n) )-\bE h( \bm Z )|
\le \bP(A_n) +\sup_{ \bm{H}_n \in A_n^c} |\bE [ h( \bm T(\bm{H}_n ) )| \bm{H}_n ] - \bE [ h( \bm Z) ] |.
\end{align*}  

It thus suffices to prove that the second term on the right-hand side above converges to $0$. If not, there exists a deterministic sequence of 2-hypermultiplexes $(H_n^{(1)}, H_n^{(2)})$ such that $$\bE T(H_n^{(1)}) \rightarrow \lambda_1, \Var T(H_n^{(1)}) \rightarrow \lambda_{1}, \bE T(H_n^{(2)}) \rightarrow \lambda_2,  \Var T(H_n^{(2)}) \rightarrow \lambda_{2}, \Cov[T(H_n^{(1)}), T(H_n^{(2)})] \rightarrow \lambda_{1,2},$$ but
$$\left(\begin{array}{c}
    T(H_n^{(1)}) \\
    T(H_n^{(2)})
    \end{array}\right) \not \dto \left(\begin{array}{c}
        Z_1 + Z_{1,2}\\
       Z_2 + Z_{1,2}
        \end{array}\right) , $$ 
where $Z_1 \sim \Pois(\lambda_1 - \lambda_{1,2}), Z_2 \sim \Pois(\lambda_2 - \lambda_{1,2}), Z_{1,2} \sim \Pois(\lambda_{1,2})$ are independent, which is a contradiction to Theorem \ref{thm:hypergraphr}. 
\end{proof}

Using the above result we can complete the proof of Corollary \ref{cor:randomhypergraph}. Suppose $\bm H_n = (H_n^{(1)}, H_n^{(2)} ) \sim \bm{H}_r(n, p, \rho)$. From the law large numbers and \eqref{eq:p12}, 
\begin{align*}  
   \bE [ \bm T(\bm H_n) | \bm H_n ] = \begin{pmatrix}
  \frac{|E(H_n^{(1)})|}{c_n^{r-1}} \\ 
  \frac{|E(H_n^{(2)})|}{c_n^{r-1}}
  \end{pmatrix} = (1+o_P(1)) \begin{pmatrix}
  \frac{ \bE[|E(H_n^{(1)})|]}{c_n^{r-1}} \\ 
  \frac{ \bE[|E(H_n^{(2)})|]}{c_n^{r-1}}
  \end{pmatrix} 
   \pto \begin{pmatrix}
  \lambda \\ 
  \lambda
  \end{pmatrix} .
\end{align*}  
Also, for $i \in \{1, 2\}$, from \eqref{eq:varianceTHn} we have, 
$\Var[ T(H_n^{(i)})| \bm{H}_n ] = R_{1,n} + R_{2,n}$,  
where 
\begin{align*}
R_{1,n}^{(i)} = \frac{1}{c_n^{r-1}}\left(1 - \frac{1}{c_n^{r-1}}\right)|E(H_n^{(i)})| \pto \lambda , 
\end{align*} 
and 
$$
R_{2,n}^{(i)} = \sum_{t=2}^{r-1}\frac{1}{c_n^{2r-t-1}}\left(1 - \frac{1}{c_n^{t-1}}\right)|\cK(t, H_n^{(i)})| . $$
Note that $\bE[|\cK(t, H_n^{(i)})|] = \Theta(n^{2r-t} p^2)$. Hence, using the fact that $c_n = \Theta( n^{\frac{r}{r-1}} p^{\frac{1}{r-1}})$ from \eqref{eq:p12}, we have 
\begin{align}\label{eq:RH}
\frac{\bE[|\cK(t, H_n^{(i)})|]}{c_n^{2r-t-1}} = \Theta\left(\frac{n^{\frac{r(t-1)}{r-1}} p^{\frac{t-1}{r-1}} }{n^t} \right) = \Theta\left(\frac{ p^{\frac{t-1}{r-1}} }{n^{\frac{r-t}{r-1}}} \right) \rightarrow 0 , 
\end{align}
 for $t \in [2, r-1]$. This implies, $R_{2,n}^{(i)} \pto 0$, and therefore $\Var[ T(H_n^{(i)})| \bm{H}_n ] \pto \lambda$. 

Next, we look at the covariance term. As in \eqref{eq:H1H2Q}, we can write 
$$\Cov[(T(H_n^{(1)}), T(H_n^{(2)}))\mid \bm{H}_n] = \bE[Q_{1,n}\mid \bm{H}_n] + \bE[Q_{2,n}\mid \bm{H}_n] , $$
where 
$$Q_{1,n} = \frac{1}{c_n^{r-1}}\left(1 - \frac{1}{c_n^{r-1}}\right) |E(H_n^{(1, 2)})| \pto \lambda_{1, 2} , $$
by \eqref{eq:p12} and the law of large numbers, 
and 
$$Q_{2,n} = \sum_{t=2}^{r-1}\frac{1}{c_n^{2r-t-1}}\left(1 - \frac{1}{c_n^{t-1}}\right)|\cK(t, H_n^{(1)}, H_n^{(2)})|. $$ 
Note that $\bE[|\cK(t, H_n^{(1)}, H_n^{(2)})|] = \Theta(n^{2r-t} p_{1, 2})$. Hence, using the fact that $c_n = \Theta( n^{\frac{r}{r-1}} p_{1,2}^{\frac{1}{r-1}})$ from \eqref{eq:p12}, we have, as in \eqref{eq:RH},  
$$\frac{\bE[|\cK(t, H_n^{(1)}, H^{(2)} )|]}{c_n^{2r-t-1}} \rightarrow 0 , $$ for $t \in [2, r-1]$. This implies, $Q_{2,n} \pto 0$ and therefore $\Cov[(T(H_n^{(1)}), T(H_n^{(2)}))\mid \bm{H}_n] \pto \lambda_{1, 2}$. Hence, the conditions of Lemma \ref{lm:randomhypergraph} are satisfied and we have the result in Corollary \ref{cor:randomhypergraph}.  \hfill $\Box$

\subsection{Proof of Corollary \ref{cor:WHnlambda} }
\label{sec:WHnlambdapf}

Recalling the definition of $\mathsf{W}(H_n)$ from \eqref{eq:WHn} we have  
\begin{align}\label{eq:lambdaWHn}
\bE \mathsf{W}(H_n) =  \sum_{ \bm e \in {E(H_n)} }   \frac{ w_{\bm e} }{c_n^{r-1}} = \sum_{i=1}^K i \cdot \frac{ |E(H_n^{(i)})| }{c_n^{r-1}}  \rightarrow \lambda , 
\end{align}
by assumption \eqref{eq:lambdaRHn}. Next, note that 
$$\Var \mathsf{W}(H_n) = U_{1, n} + U_{2, n} , $$ 
where 
\begin{align}\label{eq:varianceR1}
U_{1, n} = \sum_{ \bm e \in {E(H_n)} }  w_{\bm e}^2 \frac{1}{c_n^{r-1}}\left(1 - \frac{1}{c_n^{r-1}}\right)  \geq \sum_{ \bm e \in {E(H_n)} } w_{\bm e} \frac{1}{c_n^{r-1}}\left(1 - \frac{1}{c_n^{r-1}}\right) \rightarrow \lambda , 
\end{align}
using \eqref{eq:lambdaWHn}, and 
\begin{align} 
U_{2, n} & = \sum_{t=2}^{r-1} \sum_{ \bm e_1, \bm e_2 \in E(H_n): |\bm e_1 \cap \bm e_2| = t }    w_{\bm e_1} w_{\bm e_2}   \frac{1}{c_n^{2r-t-1}}\left(1 - \frac{1}{c_n^{t-1}}\right) \nonumber \\ 
& \geq \sum_{t=2}^{r-1} \sum_{ \bm e_1, \bm e_2 \in E(H_n): |\bm e_1 \cap \bm e_2| = t }   \frac{1}{c_n^{2r-t-1}}\left(1 - \frac{1}{c_n^{t-1}}\right) \quad \text{(since $w(\bm e) \geq 1$, for $\bm e \in E(H_n)$)} \nonumber \\ 
& \geq \sum_{t=2}^{r-1} |\cK(t, H_n)| \frac{1}{c_n^{2r-t-1}}\left(1 - \frac{1}{c_n^{t-1}}\right) \geq 0 . 
\label{eq:covarianceR2} 
\end{align}
Since $\Var \mathsf{W}(H_n) \rightarrow \lambda$ by assumption \eqref{eq:lambdaRHn}, equalities must hold in \eqref{eq:varianceR1} and \eqref{eq:covarianceR2} in the limit. Specifically, we have $U_{2, n} \rightarrow 0$, which 
means $|\cK(t, H_n)| = o(c_n^{2r-t-1})$, for $t \in [2, r-1]$. Hence, 
\begin{align*} 
  |\cK(t, H_n^{(i)})| \leq  |\cK(t, H_n)|= o(c_n^{2r-t-1}) , 
\end{align*}
   for $t \in [2, r-1]$ and $1 \leq i \leq K$. Also, from \eqref{eq:varianceR1} we have $U_{1, n} \rightarrow \lambda$,  which means 
   $$U_{1, n} - \sum_{e \in E(H_n)} \frac{w_{\bm e}}{c_n^{r-1}}\left(1 - \frac{1}{c_n^{r-1}}\right) = \sum_{e \in E(H_n)} w_{\bm e}(w_{\bm e} - 1 ) \frac{1}{c_n^{r-1}}\left(1 - \frac{1}{c_n^{r-1}}\right) = \sum_{i=2}^{K} i(i-1) \frac{|E(H_n^{(i)})|}{c_n^{r-1}} \rightarrow 0.$$ 
   This implies, $\frac{|E(H_n^{(i)})|}{c_n^{r-1}} \rightarrow 0$, 
for $2 \leq i \leq K$. Hence, from \eqref{eq:lambdaWHn}, $\frac{|E(H_n^{(1)})|}{c_n^{r-1}} \rightarrow \lambda$. Therefore, applying Proposition \ref{ppn:dmultiplegraphs}, the result in Corollary \ref{cor:WHnlambda} follows. \hfill $\Box$

\small

\subsection*{Acknowledgements} B. B. Bhattacharya was supported by NSF CAREER grant DMS 2046393 and a Sloan Research Fellowship.

\bibliographystyle{plain}
\bibliography{bibfile}

\appendix 

\normalsize 

\section{A Hypergraph where the First Moment Phenomenon Does Not Hold}
\label{sec:hypergraphexample}

Let $H_n= (V(H_n), E(H_n))$ be a sequence of 3-uniform hypergraphs, with $V(H_n) = [n]$ and $E(H_n) = \{(1, a, b): 2 \leq a < b \leq n\}$, for $n \geq 3$. Clearly, $|E(H_n)| = {n-1 \choose 2} = \Theta(n^2)$. Hence, choosing $c_n = n$ ensures 
$$\bE T(H_n) = \frac{E(H_n)}{c_n^2} \rightarrow \frac{1}{2}.$$
However, in this case $T(H_n)$ does not converge in distribution to $\Pois(\frac{1}{2})$. To see this, observe that 
$$T(H_n) = \sum_{2 \leq a < b \leq n} \bm 1\{X_1 = X_a = X_b\}, $$
where $X_v$ is the color of the vertex $v \in [n]$. Conditioning on the event $\{X_1 = a\}$, for some $a \in [c_n]$, notice that  
$$T(H_n) | \{X_1 = a\} = {Y_a \choose 2}, $$ 
where $Y_a$ is the number of vertices in $[2, n]$ with color $a$. Clearly, $Y_a \sim \mathrm{Bin}(n-1, 1/n) \dto \Pois(1)$, for all $a \in [c_n]$. Hence, $$T(H_n) \dto {Z \choose 2}, $$
where $Z \sim \Pois(1)$. Clearly, $ {Z \choose 2}$ does not have a $\Pois(\frac{1}{2})$ distribution, in particular, it does not take the values $0$ or $1$. 
This shows that, unlike in the graph case (recall the discussion following \eqref{eq:ETHn}), the first moment phenomenon does not hold for general $r$-uniform hypergraphs, for $r \geq 3$. 

\section{Failure of the Second-Moment Phenomenon for $3$-Hypermultiplexes}
\label{sec:3layersH}

Consider two sequences of 3-multiplexes $\bm H_n = (H_n^{(1)}, H_n^{(2)}, H_n^{(3)})$ and $\tilde{\bm H}_n = (\tilde H_n^{(1)}, \tilde H_n^{(2)}, \tilde H_n^{(3)})$ defined as follows: 

\begin{itemize} 

\item Let $A_1, A_2, A_3$ be 3 sets of size $n$ such that 
$$\frac{|A_1\cap A_2 \cap A_3|}{n} \rightarrow \lambda \in (0, \tfrac{1}{4})$$ 
and 
$$A_1\cap A_2 \cap A_3 = A_1\cap A_2 = A_2\cap A_3 = A_1\cap A_3,$$
as shown in Figure \ref{fig:hypermultiplex example} (a). Let $H_n^{(1)}, H_n^{(2)}, H_n^{(3)}$ be the complete graphs with vertex sets $A_1, A_2, A_3$, respectively. Define $$\bm H_n = (H_n^{(1)}, H_n^{(2)}, H_n^{(3)}).$$

\item Let $\tilde A_1, \tilde A_2, \tilde A_3$ be 3 sets of size $n$ such that $\tilde A_1\cap \tilde A_2 \cap \tilde A_3 = \emptyset$, 
$$\frac{|\tilde A_1\cap \tilde A_2|}{n} = \frac{|\tilde A_2\cap \tilde A_3|}{n} = \frac{| \tilde A_1\cap \tilde A_3|}{n} \rightarrow \lambda   \in (0, \tfrac{1}{4}) ,$$ 
as shown in Figure \ref{fig:hypermultiplex example} (b). Let $\tilde H_n^{(1)}, \tilde H_n^{(2)}, \tilde H_n^{(3)}$ be the complete graphs with vertex sets $\tilde A_1, \tilde A_2, \tilde A_3$, respectively. Define $$\tilde{\bm H}_n = (\tilde H_n^{(1)}, \tilde H_n^{(2)}, \tilde H_n^{(3)}).$$

\end{itemize}

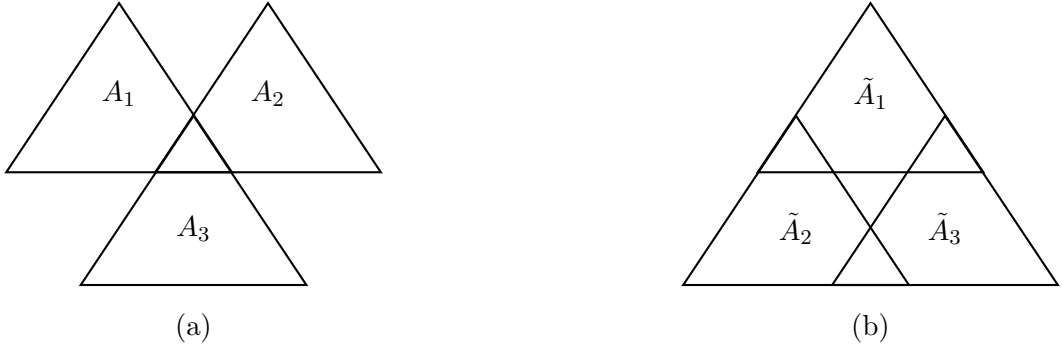
\begin{figure}[h]
    \centering
    \begin{tikzpicture}[scale=1.5]
    \draw[thick] (-0.16,2.5) -- (-1.16,1) -- (0.84,1) -- cycle;
    \draw[thick] (1.16,2.5) -- (0.16,1) -- (2.16,1) -- cycle;
    \draw[thick] (0.5,1.5) -- (-0.5,0) -- (1.5,0) -- cycle;
    
    \node[thick] at (-0.16,1.7) {$A_1$};
    \node[thick] at (1.16,1.7) {$A_2$};
    \node[thick] at (0.5,0.5) {$A_3$};
    \node[thick] at (0.5,-0.375) {(a)}; 
    
    \begin{scope}[xshift=6cm]
        \draw[thick] (-0.16,1.5) -- (-1.16,0) -- (0.84,0) -- cycle;
        \draw[thick] (1.16,1.5) -- (0.16,0) -- (2.16,0) -- cycle;
        \draw[thick] (0.5,2.5) -- (-0.5,1) -- (1.5,1) -- cycle;
        
        \node[thick] at (0.5,1.7) {$\tilde A_1$};
        \node[thick] at (-0.16,0.5) {$\tilde A_2$};
        \node[thick] at (1.16,0.5) {$\tilde A_3$}; 
        \node[thick] at (0.5,-0.375) {(b)}; 

    \end{scope}
    \end{tikzpicture}
    \caption{\small{Illustration demonstrating the failure of the second-moment phenomenon in characterizing the limiting distribution of $3$-multiplexes. 
    }}
    \label{fig:hypermultiplex example}
\end{figure}

Now, choose $c_n= n^{2}$. Then, since $\frac{{n \choose 2}}{c_n} \rightarrow \frac{1}{2}$, 
  \begin{align*}
     \bE [ \bm T(\bm H_n) ] =  \bE [ \bm T(\tilde{\bm H}_n) ]  \rightarrow \begin{pmatrix}
      \frac{1}{2} \\ 
      \frac{1}{2} \\
      \frac{1}{2}
      \end{pmatrix} .
      \end{align*}
Also, a direct calculation shows that 
    \begin{align*}
        \lim_{n \rightarrow \infty} \mathrm{Var}[ \bm T(\bm H_n) ] &= \lim_{n \rightarrow \infty} \mathrm{Var}[ \bm T(\tilde{\bm H}_n) ] = \begin{pmatrix}
      \frac{1}{2} & \lambda' & \lambda' \\ 
      \lambda' & \frac{1}{2} &  \lambda' \\
      \lambda' & \lambda' & \frac{1}{2}
      \end{pmatrix}. 
    \end{align*}
where $\lambda' = \frac{1}{2}\lambda^2$.
  This shows that the first and second moments (mean vectors and covariance matrices) of $\bm T(\bm H_n)$ and $\bm T(\tilde{\bm H}_n)$ asymptotically equal. However, it turns out that the limiting distributions of $\bm T(\bm H_n)$ and $\bm T(\tilde{\bm H}_n)$ are not the same. To see this, define $H_n^{(*)} = H_n^{(1)} \cap H_n^{(2)} \cap H_n^{(3)}$, that is, the graph with edge set $E(H_n^{(1)}) \cap E(H_n^{(2)}) \cap E(H_n^{(3)})$. Then applying Proposition \ref{ppn:dmultiplegraphs}, we obtain the following: 
$$\left(\begin{array}{c}
    T(H_n^{(1)}\setminus H_n^{(*)}) \\ 
    T(H_n^{(2)}\setminus H_n^{(*)}) \\
    T(H_n^{(3)}\setminus H_n^{(*)}) \\
    H_n^{(*)}
    \end{array}\right) \dto \left(\begin{array}{c}
        Z_1 \\ 
        Z_2 \\
        Z_3 \\
        Z_{1,2,3}
        \end{array}\right), $$
        where $Z_1, Z_2, Z_3\sim \Pois(\frac{1}{2} - \lambda'), Z_{1,2,3} \sim \Pois(\lambda')$ are independent. This implies, 
\begin{align}\label{eq:HA}
\left(\begin{array}{c}
    T(H_n^{(1)}) \\ 
    T(H_n^{(2)}) \\
    T(H_n^{(3)}) \\
    \end{array}\right) \dto \left(\begin{array}{c}
        Z_1 + Z_{1,2,3}\\ 
        Z_2 + Z_{1,2,3}\\
        Z_3 + Z_{1,2,3}\\
    \end{array}\right). 
    \end{align}
Similarly, it can be shown that 
\begin{align}\label{eq:HApairwise}
\left(\begin{array}{c}
    T(\tilde H_n^{(1)}) \\ 
    T(\tilde H_n^{(2)}) \\
    T(\tilde H_n^{(3)}) \\
    \end{array}\right) \dto \left(\begin{array}{c}
        Z_1' + Z_{1,2} + Z_{1,3}\\ 
        Z_2' + Z_{1,2} + Z_{2,3}\\
        Z_3' + Z_{1,3} + Z_{2,3}\\
    \end{array}\right), 
    \end{align}
    where $Z_1', Z_2', Z_3'\sim \Pois(\frac{1}{2} - 2\lambda'), Z_{1,2}, Z_{1,3}, Z_{2,3} \sim \Pois(\lambda')$ are independent.
Clearly, the limiting distributions in \eqref{eq:HA} and \eqref{eq:HApairwise} are not the same.

This example show that, although the first 2 moments of $\bm T(\bm H_n)$ and $T(\tilde{\bm H}_n)$ converge to the same limit, their limiting distributions are not the same. Hence, the second moment phenomenon can fail for (hyper)multiplexes with 3 or more layers.


\section{Ordering Lemma}
\label{sec:connectedpf}

In this section we prove the ordering lemma used in the proof of Lemma \ref{lm:countS}. The proof is essentially a verbatim adaption of \cite[Lemma A.1]{bhattacharya2020second} to the hypergraph setting.

\begin{lem}\label{lm:connected} Fix $k \geq 1$ and suppose $S = (\bm e_1, \bm e_2, \ldots, \bm e_k)$ be a collection of hyperedges (not necessarily distinct) of size $r \geq 2$ such that the hypergraph formed the union of the edges in $S$ is connected and $|\bigcup_{j=1}^k \bm e_j| <  b r-b+1$, where $b$ is the number of distinct hyperedges in $S$. Then there exists an ordering (permutation) $\sigma: [k] \rightarrow [k] $ such that the following hold: 
\begin{itemize}
\item for every $i \in [2, k]$, $t_i(\sigma)  \geq 1$, and  

\item for some $i \in [2, k]$, $t_i(\sigma) \in [2, r-1] $, 
\end{itemize} 
where 
$$t_i(\sigma) = \left|\bm e_{\sigma(i)} \bigcap \left(\bigcup_{j = 1}^{i-1}\bm e_{\sigma(j)}\right) \right| , $$ 
for $i \in [2, r]$. 
\end{lem}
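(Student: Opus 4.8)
The plan is to reduce to distinct hyperedges, build a favorable ordering greedily, and reattach the repeated copies at the very end. Let $\cP$ be the connected $r$-uniform hypergraph on the $b$ distinct hyperedges among $\bm e_1,\dots,\bm e_k$; its vertex set is $\bigcup_{j=1}^k \bm e_j$, so the hypothesis becomes $|V(\cP)| < b(r-1)+1$. For any connectivity ordering $\bm f_1,\dots,\bm f_b$ of a connected $r$-uniform hypergraph (one with $s_i := |\bm f_i \cap \bigcup_{j<i}\bm f_j| \ge 1$ for $i \ge 2$) one has $|V(\cP)| = b(r-1)+1 - \sum_{i=2}^b(s_i-1)$, so the strict inequality forces $\sum_{i=2}^b (s_i-1) \ge 1$; in particular some step must have $s_i \ge 2$. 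The real content of the lemma is to convert this ``excess overlap'' into a step whose overlap lands in the window $[2,r-1]$ rather than degenerating to $r$.

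First I would grow a hypertree greedily: starting from an arbitrary edge, I repeatedly append an edge meeting the current vertex set $W$ in \emph{exactly} one vertex, until none remains. This produces edges $\bm f_1,\dots,\bm f_m$ with $|W_m| = m(r-1)+1$. If $m=b$ then $|V(\cP)| = b(r-1)+1$, contradicting the hypothesis; hence $m<b$, and connectivity forces some un-added edge $\bm f$ to meet $W_m$, with $|\bm f \cap W_m| \ne 1$ by the stopping rule, so $|\bm f \cap W_m| \ge 2$. If moreover $|\bm f \cap W_m| \le r-1$, then appending $\bm f$ gives the desired step in $[2,r-1]$, and I extend to a full connectivity ordering (possible as $\cP$ is connected).

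The main obstacle is the fully-contained case $|\bm f \cap W_m| = r$, where the naive step only gives $t=r$. To resolve it I would trace back the tree growth to the edge $\bm f_j$ whose insertion first completes $\bm f$ (so $\bm f \subseteq W_j$ but $\bm f \not\subseteq W_{j-1}$) and set $a := |\bm f \cap (\bm f_j \setminus W_{j-1})|$, which satisfies $1 \le a \le r-1$ since $\bm f_j$ contributes exactly $r-1$ new vertices and at least one of them lies in $\bm f$. Then $|\bm f \cap W_{j-1}| = r-a$. If $a \le r-2$, inserting $\bm f$ immediately before $\bm f_j$ realizes a step with overlap $r-a \in [2,r-1]$, and I check that the surgery leaves a valid connectivity ordering (inserting $\bm f$ only enlarges later unions, so all subsequent overlaps remain $\ge 1$). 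If $a=r-1$, then $\bm f$ and $\bm f_j$ share all $r-1$ new vertices, so $|\bm f \cap \bm f_j| = r-1 \ge 2$ (this is where $r \ge 3$ is used), and simply starting the ordering with $\bm f_j,\bm f$ already produces the step.

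Finally I would reattach the repeats by appending each repeated copy after its representative has been placed; such a copy meets the current union in all $r$ of its vertices, so it contributes $t=r \ge 1$ and preserves the first (connectivity) property while leaving untouched the $[2,r-1]$ step already secured among the distinct edges. I expect the delicate point to be the fully-contained case---verifying that the local surgery yields a legitimate connectivity ordering---whereas the reduction to distinct edges, the vertex count, and the greedy construction are routine. Note the conclusion is only meaningful for $r \ge 3$, since $[2,r-1]=\emptyset$ when $r=2$; this is consistent with Lemma \ref{lm:countS} being stated for $r \ge 3$ and the $r=2$ case being handled separately in Section \ref{sec:THnKgraphpf}.
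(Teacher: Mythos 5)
Your proof is correct, and it rests on the same two pillars as the paper's argument --- the vertex-count identity $|V| = br - \sum_{i\ge 2} s_i$ that forces an excess overlap somewhere, and a local surgery around a fully-contained edge --- but it organizes them differently. The paper starts from an \emph{arbitrary} connectivity ordering, assumes for contradiction that every overlap lies in $\{1,r\}$, eliminates the all-overlap-one case by counting, and then relocates the first fully-contained new edge $\bm e_{\sigma(i_0)}$ by chasing the indices $i_1$ and $i_2$ of its earliest intersecting predecessors, with a sub-case analysis on whether those pairwise intersections have size $1$ or at least $2$. You instead build the ordering constructively: grow a hypertree with overlaps exactly $1$ until blocked, note that the counting identity forces the greedy process to stop with $m<b$, observe that any blocking edge must meet the current union in at least $2$ vertices, and in the fully-contained case exploit the tree invariant $|W_j| = j(r-1)+1$ --- each tree step contributes exactly $r-1$ fresh vertices --- to pin $a = |\bm f \cap (\bm f_j\setminus W_{j-1})|$ inside $[1,r-1]$, so that either $|\bm f \cap W_{j-1}| = r-a \in [2,r-1]$ (insert $\bm f$ before $\bm f_j$) or $|\bm f \cap \bm f_j| = r-1 \ge 2$ (start the ordering with the pair $\bm f_j, \bm f$). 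This buys a somewhat cleaner surgery: the paper's $i_1/i_2$ chase and the claim $t_{i_2+1}(\theta)=2$ require verifying that the intermediate edges contribute nothing beyond the single vertex $s$, whereas your dichotomy on $a$ falls out immediately from the greedy invariant. All the auxiliary steps you flag --- that connectivity forces a blocking edge to exist, that $j\ge 2$ in the trace-back, that the insertion preserves all later overlaps being at least $1$, and that repeated copies can be appended with overlap $r$ without disturbing the secured $[2,r-1]$ step --- check out. Your closing remark that the conclusion is empty for $r=2$ and that this is harmless because the lemma is only invoked for $r\ge 3$ is also accurate.
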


\begin{proof}
Since $S$ is connected, there exists an ordering $\sigma$, such that $t_i(\sigma) \geq 1$ for every $i \in [2,k]$. Suppose, towards a contradiction, $t_i(\sigma) \in \{1, r\}$, for every $i \in [2,k]$. This means there can be 2 cases:
\begin{itemize}

\item[{\it Case} 1:] For every $i \in [2, k]$, either $t_i(\sigma) = 1$, or $\bm e_{\sigma(i)} \in \{\bm e_{\sigma(1)},\dots,\bm e_{\sigma(i-1)}\}$.
$$\ell=\big|\{i \in[2,k]: t_i(\sigma) = 1\}\big| .$$ 
Then, $b = 1+\ell$ and $|\bigcup_{j=1}^k \bm e_j| = r + \ell (r-1)$. This yields a contradiction, because $$\Big|\bigcup_{j=1}^k \bm e_j\Big| = r + (b-1)(r-1) = br-b+1 .$$

\item[{\it Case} 2:]  There exists $i \in [2, k]$ such that $t_i(\sigma)=r$ and $\bm e_{\sigma(i)} \notin \{\bm e_{\sigma(1)},\dots,\bm e_{\sigma(i-1)}\}$. Define 
\begin{equation*} 
i_0 = \inf \big\{i \in [2, k] : t_i(\sigma)=r~\textrm{and}~\bm e_{\sigma(i)} \notin \{\bm e_{\sigma(1)},\dots,\bm e_{\sigma(i-1)}\} \big\} 
\end{equation*} 
and 
\begin{equation*} 
i_1 = \inf \big\{1\leq i < i_0:\bm e_{\sigma(i_0)}\cap\bm e_{\sigma(i_1)} \neq \emptyset \} .
\end{equation*} 
Now, consider the following cases: 
\begin{itemize} 

\item[--] $\big|\bm e_{\sigma(i_0)}\cap\bm e_{\sigma(i_1)}\big| \geq 2$: In this case, consider a permutation $\tau: [k]\rightarrow [k]$ with $\tau(1) = \sigma(i_0)$, $\tau(2) = \sigma(i_1)$, and $t(i,\tau) \geq 1$,  for every $i \in [2,k]$. By the definition of $i_0$, it follows that $\bm e_{\sigma(i_1)} \neq\bm e_{\sigma(i_0)}$, and since $\big|\bm e_{\sigma(i_0)}\cap\bm e_{\sigma(i_1)}\big| \geq 2$, we have $t_2(\tau) \in [2, r-1]$, as required.

\item[--] $\big|\bm e_{\sigma(i_0)}\cap\bm e_{\sigma(i_1)}\big| = 1$: Let $\{s\} =\bm e_{\sigma(i_0)}\cap\bm e_{\sigma(i_1)}$. Define 
\begin{equation*} 
i_2 = \inf \big\{i_1< i < i_0: (\bm e_{\sigma(i_0)}\setminus \{s\})\cap\bm e_{\sigma(i)} \neq \emptyset \} . 
\end{equation*}	
Once again, there exists a permutation $\kappa: [k]\rightarrow [k]$ such that $\kappa(1) = \sigma(i_0)$, $\kappa(2) = \sigma(i_2)$ and $t_i(\kappa) \geq 1$, for every $i \in [2, k]$. So, if $\big|\bm e_{\sigma(i_0)}\cap\bm e_{\sigma(i_2)}\big| \geq 2$, then $t_2(\kappa) \in [2, r-1]$, as desired. Hence, suppose $\big|\bm e_{\sigma(i_0)}\cap\bm e_{\sigma(i_2)}\big| = 1$. Now, there exists a permutation $\theta: [k] \rightarrow [k]$ satisfying: 
	\[   
	\theta(i) = 
	\begin{cases}
	\sigma(i) &\quad\text{if} ~1\leq i \leq i_2 , \\
	\sigma(i_0) &\quad\text{if} ~ i=i_2+1 , \\ 
	\end{cases}
	\]
and $t_i(\theta) \geq 1$, for every $i \in [2,k]$.	Now, it is easy to see that $t_{i_2+1}(\theta) = 2$, completing the proof of Lemma \ref{lm:connected}. 
\end{itemize}
\end{itemize} 
\end{proof}

\end{document}